\newcommand{\ac}[1]{{\color{red}{\bf [~Austin:\ } \color{red}{\em #1}\color{red}{\bf~]}}}
\newlist{arrowlist}{itemize}{1}
\setlist[arrowlist]{label=$\to$}
\tiny\color{gray}, 
\newcommand{\snsk}[2]{\genfrac{\{}{\}}{0pt}{}{#1}{#2}}
\titleformat{\section}[hang]{\normalfont\bfseries}{\thesection.}{.5em}{}[]
\titlespacing{\section}{\parindent}{3ex plus .1ex minus .2ex}{1em}
\titleformat{\subsection}[runin]{\normalfont\itshape}{\thesubsection.}{.5em}{}[.]
\titlespacing{\subsection}{\parindent}{3ex plus .1ex minus .2ex}{1em}
\newtheorem{theorem}{Theorem}
\newtheorem{conjecture}[theorem]{Conjecture}
\newtheorem{lemma}[theorem]{Lemma}
\newtheorem{proposition}[theorem]{Proposition}
\theoremstyle{definition}
\theoremstyle{remark}
\theoremstyle{example}
\numberwithin{theorem}{section}
\providecommand{\R}{}
\providecommand{\N}{}
\providecommand{\P}{}
\renewcommand{\R}{\mathbb{R}}
\renewcommand{\N}{{\mathbb N}}
\renewcommand{\P}{\mathbb{P}}
\newcommand{\unit}{1\!\!1}
\newcommand{\wt}[1]{\widetilde{#1}}
\newcommand{\vep}{\varepsilon}
\newcommand{\E}{\textsf{\upshape E}}
\newcommand{\prob}{\textsf{\upshape Pr}}
\newcommand{\norm}[1]{\left\lVert#1\right\rVert}
\title{Playing Sudoku on random 3-regular graphs}
\author{
Jack Dippel \\
Toronto Metropolitan University \\
\texttt{jack.dippel@torontomu.ca}
\and 
Austin Eide \\
Toronto Metropolitan University\\
\texttt{austin.eide@torontomu.ca}
\and 
Pawe\l{} Pra\l{}at \\
Toronto Metropolitan University\\
\texttt{pralat@torontomu.ca}
\and 
Daniel Willhalm \\
Toronto Metropolitan University\\
\texttt{dwillhalm@torontomu.ca}
} 
\begin{document}

\maketitle

\begin{abstract}
    The \textit{Sudoku} number $s(G)$ of graph $G$ with chromatic number $\chi(G)$ is the smallest partial $\chi(G)$-colouring of $G$ that determines a unique $\chi(G)$-colouring of the entire graph. We show that the Sudoku number of the random $3$-regular graph $\mathcal{G}_{n,3}$ satisfies $s(\mathcal{G}_{n,3}) \leq (1+o(1))\frac{n}{3}$ asymptotically almost surely. We prove this by analyzing an algorithm which $3$-colours $\mathcal{G}_{n,3}$ in a way that produces many \textit{locally forced} vertices, i.e., vertices which see two distinct colours among their neighbours. The intricacies of the algorithm present some challenges for the analysis, and to overcome these we use a non-standard application of Wormald's \textit{differential equations method} that incorporates tools from finite Markov chains.
\end{abstract}

\section{Introduction}

\subsection{Background}

The \emph{Sudoku number} $s(G)$ of a graph $G = (V,E)$ with chromatic number $\chi(G)$ is the smallest possible size of a subset $S \subseteq V$ with the following property: there is some partial $\chi(G)$-colouring of the induced subgraph $G[S]$ that has a unique extension to a $\chi(G)$-colouring of all of $G$. We call any set with this property a \emph{Sudoku set} for $G$. While our terminology follows that of the recent work \cite{lau2023sudoku}, Sudoku sets have previously appeared in the literature as \textit{defining sets}~(\cite{mahmoodian1998defining}--\cite{mahmoodian2005defining}) and \textit{determining sets}~(\cite{cambie2024bounds,cooper2014critical}). (Generally, \textit{critical set} has been used to refer to a minimal Sudoku set.) The parameter $s(G)$ was also called the \textit{forcing chromatic number} in~\cite{harary2007computational}. Determining $s(G)$ is a generalization of the analogous problem for Latin squares---namely, identifying a smallest set of entries that uniquely determines a full Latin square---which dates to~\cite{nelder1977critical}.

The study of Sudoku sets is motivated by problems in combinatorics, optimization, and information theory, where identifying minimal structures that uniquely determine global properties is of significant interest. For instance, in~\cite{hatami2016teaching} Hatami and Qian relate Sudoku sets in Latin Squares to the concepts of \textit{teaching dimension} and \textit{VC dimension}, which have important applications in machine learning. More popularly, in~\cite{mcguire2014there} it was established (with computer assistance) that ``there is no $16$-clue Sudoku," confirming a conjecture that $17$ clues are needed to determine a Sudoku puzzle with a unique solution. (The name \textit{Sudoku number} is, of course, inspired by this problem.)

Most previous work on the Sudoku number has focused primarily on proving bounds on $s(G)$ for specific families of graphs. We review some results which are relevant for our purposes, beginning with some lower bounds. In~\cite{mahmoodian1997defining}, it is shown that for any graph $G$, 
\begin{equation}\label{eq:sudoku_deterministic_lower_bound1}
    s(G) \ge |V(G)| - \frac {|E(G)|}{\chi(G)-1}.
\end{equation}
\cite{mahmoodian1999defining} studies the Sudoku number for $d$-regular graphs with chromatic number $d$, in which case
\begin{equation}\label{eq:sudoku_determininstic_lower_bound2}
    s(G) \ge \bigg\lceil \frac{d-2}{2(d-1)} |V(G)| + \frac{2 + (d-2)(d-3)}{2(d-1)}\bigg\rceil.
\end{equation}
(We remark that~\eqref{eq:sudoku_determininstic_lower_bound2} gives a lower bound of $\left\lceil \frac{|V(G)|}{4} + \frac{1}{2} \right\rceil$ when $d = 3.$)

For $d$-regular graphs $G$ with $d\ge 4$ and $\chi(G) = 3$, it is also shown in \cite{mahmoodian1999defining} that $s(G) \ge 2$. Further, \cite{mahmoodian2006defining} proves that for any $d\ge 3$ and any at least $2(d-1)$-regular graph $G$ with $\chi(G) = d$ and $|V(G)| \ge 3d$ it holds that $s(G) \ge d-1$. All of these lower bounds are sharp with the exception of \eqref{eq:sudoku_determininstic_lower_bound2}, where sharpness requires $d\in\{3,4,5\}$. Upper bounds on $s(G)$ are harder to come by. It is known (see for instance \cite[Theorem 2.1]{cambie2024bounds}) that $s(G) \leq n-1$ for any connected graph $G$ on $n$ vertices, with equality if and only if $G = K_{n}$. However, beyond this, the literature contains relatively little on general upper bounds for $s(G)$, notwithstanding some results on specific graph families in, e.g., \cite{cambie2024bounds}, \cite{cooper2014critical}, and \cite{lau2023sudoku}.

Notably, the Sudoku number of random graphs has not been studied yet. The random $d$-regular graph $\mathcal{G}_{n,d}$ provides a natural model for studying this problem, as the chromatic number $\chi(\mathcal{G}_{n,d})$ is known to concentrate on a single (explicit) value \textit{asymptotically almost surely}, or a.a.s., for many fixed values of $d$ as $n \to \infty$ \cite{kemkes2010chromatic}. (See Subsection \ref{sec:models} for a formal definition of the probability space and more on asymptotic notation.) In this paper, we focus our attention on bounding the Sudoku number of the random cubic graph $\mathcal{G}_{n,3}$, which is known to have chromatic number $3$ a.a.s. Our main result is the following.

\begin{theorem}\label{thm:main_theorem}
    A.a.s., $s(\mathcal{G}_{n,3}) \leq (1+o(1))\frac{n}{3}$.
\end{theorem}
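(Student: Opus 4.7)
The plan is to produce an explicit 3-colouring of $\mathcal{G}_{n,3}$ together with a Sudoku set of size at most $(1+o(1))n/3$. The structural idea is to arrange for all but at most $(1+o(1))n/3$ vertices to be \emph{locally forced}: if $c$ is a proper 3-colouring and $v$ has neighbours in exactly two distinct colours under $c$, then $c(v)$ is the unique remaining colour compatible with a proper 3-colouring. Take $S$ to be the set of vertices that are not locally forced; showing that this $S$ actually forces a unique extension to the full colouring then amounts to ruling out non-trivial Kempe swaps supported on $V \setminus S$, which should follow from the local (tree-like) structure of the random cubic graph together with the way the colouring is built.

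The construction would work directly in the pairing (configuration) model for $\mathcal{G}_{n,3}$. I would design an exposure algorithm that simultaneously reveals the graph and assigns colours, processing one vertex at a time under a priority rule chosen to encourage each vertex's neighbourhood to end up bichromatic. Each vertex carries a \emph{state} recording how many of its three neighbours have been exposed and the multiset of colours they received; a vertex becomes locally forced precisely when its state stabilises with two distinct colours among three exposed neighbours. The aim is to show that at termination the vertices which fail to be locally forced constitute at most a $1/3 + o(1)$ fraction of $V(\mathcal{G}_{n,3})$.

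The main analytic tool is Wormald's differential equations method applied to the normalised counts of vertices in each state. The non-standard twist flagged in the abstract is that a single high-level step of the algorithm triggers a cascade of sub-steps (pairing one half-edge can change the state of an already-coloured neighbour and so propagate further), so that the drift of the state variables between main-loop iterations is naturally expressed as an expectation over an auxiliary finite Markov chain that describes how colour information percolates through half-edges before the main loop resumes. Computing hitting probabilities or absorption weights of that chain gives the drift functions, and then the usual DEM machinery yields the limiting state densities from which the $n/3$ bound is read off.

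The principal obstacle is, I expect, twofold. First, showing rigorously that the set of non-locally-forced vertices really forms a Sudoku set requires a global uniqueness argument and not merely a local one; this likely calls on structural properties specific to the algorithmic colouring (for instance, the absence of short bichromatic alternating cycles contained entirely in $V \setminus S$) on top of the a.a.s.\ sparse local structure of $\mathcal{G}_{n,3}$. Second, the Markov-chain-augmented DEM must be set up so that Wormald's Lipschitz and boundedness hypotheses hold despite the inner cascade, and it must accommodate a terminal phase in which only $o(n)$ vertices remain; handling this carefully will probably require splitting the process into phases, each governed by its own system of differential equations, and estimating the total error accumulated across phases.
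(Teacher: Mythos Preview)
Your plan diverges from the paper's approach in a way that leaves a genuine gap. The paper does not work in the bare configuration model; it passes to the contiguous model $G_n = H_n \cup M_n$ (Hamilton cycle plus random perfect matching) and processes vertices \emph{linearly along the cycle}. The algorithm \textbf{Sudoku} alternates between runs of two types and maintains a pointer $\textbf{ptr}(i)$ with the invariant that $S(i)\cap[\textbf{ptr}(i)]$ is already a Sudoku set for the partial colouring on $[\textbf{ptr}(i)]$; correctness (Lemma~\ref{lem:correctness}) is then a short induction and no global Kempe-chain argument is ever needed. The Markov chain in the paper is not a cascade chain for propagating colour updates through half-edges; it lives on a fixed set of eighteen \emph{vertex types} (encoding colour, direction of the matching edge, and run type $A$ or $B$) and is used over segments of length $\omega=n^{1/3}$ to show that the three colour classes of unsaturated vertices stay balanced, which is what makes the DEM of Section~\ref{sec:DEs} go through and yields the integral in~\eqref{eq:approximation}.

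The gap in your outline is exactly the point you flag as your first obstacle, and it is not minor. Taking $S$ to be the set of non--locally-forced vertices does not in general produce a Sudoku set: in any proper $3$-colouring of a cubic graph every vertex already sees at most two colours among its neighbours (three neighbours drawn from two available colours), so failure to be locally forced means a monochromatic neighbourhood; yet even when every vertex of $V\setminus S$ is locally forced, $S$ need not determine the colouring, as any bichromatic cycle inside $V\setminus S$ can be swapped. Saying this ``should follow from local tree-like structure together with the way the colouring is built'' is a hope, not an argument: what is actually required (cf.\ Theorem~\ref{thm:strong_sudoku}) is an \emph{ordering} of $V\setminus S$ in which each vertex sees two colours among its predecessors, and nothing in a configuration-model exposure supplies one. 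The Hamilton cycle gives the paper that ordering for free, and the run structure plus Lemma~\ref{lem:size_bound} converts it into the bound $|S|\le \tfrac12|\mathcal{B}_C|+o(n)$, which is where the $n/3$ genuinely comes from; your sketch offers no analogous mechanism and no heuristic explaining why $n/3$ should emerge from your state-count DEM.
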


To prove Theorem~\ref{thm:main_theorem} we design an algorithm \textbf{Sudoku} which, a.a.s., properly $3$-colours $\mathcal{G}_{n,3}$ and simultaneously constructs a corresponding Sudoku set of size at most $(1+o(1))\frac{n}{3}.$ (In fact this is not quite true---our algorithm colours a random graph that is \textit{contiguous} with $\mathcal{G}_{n,3}$ which allows us to exploit more structure; see Subsection~\ref{sec:models}.) Most of the paper is devoted to analyzing this algorithm, a task to which we apply the \textit{differential equations method} of Wormald~\cite{wormald1999differential}. 

Despite some effort, we were unable to improve the deterministic lower bound $s(\mathcal{G}_{n,3}) \geq \lceil n/4 + 1/2 \rceil$ implied by~\eqref{eq:sudoku_determininstic_lower_bound2}. Perhaps surprisingly, some existing results, supported by simulations, seem to suggest that this lower bound may be correct, and we conjecture as much below.

\begin{conjecture}\label{conj:n/4}
    A.a.s., $s(\mathcal{G}_{n,3}) = (1+o(1))\frac{n}{4}.$
\end{conjecture}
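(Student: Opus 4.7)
My plan is to match the deterministic lower bound $s(\mathcal{G}_{n,3}) \ge \lceil n/4 + 1/2\rceil$ from~\eqref{eq:sudoku_determininstic_lower_bound2}. The bound rests on the observation that, in any successful forcing process, orienting each edge from its earlier-coloured endpoint to its later-coloured endpoint yields an acyclic orientation in which every non-Sudoku vertex has in-degree at least $\chi(G)-1=2$ (one witness per blocked colour). Summing in-degrees gives $2(n-s(G)) \le |E(\mathcal{G}_{n,3})| = 3n/2$, so matching the lower bound up to $o(n)$ would require almost every non-Sudoku vertex to have in-degree exactly~$2$ and its two in-neighbours to carry the two colours different from its own. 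My strategy is therefore to construct, a.a.s., a proper $3$-colouring $f$ of $\mathcal{G}_{n,3}$ together with a compatible acyclic orientation $D$ of its edges satisfying this structural condition; the set of in-degree-$0$ vertices, of size $(1+o(1))n/4$, would form the Sudoku set $S$, and forcing would proceed along any topological order of $D$.

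Concretely, I would work in a matching-contiguous model of $\mathcal{G}_{n,3}$, exposing pairings sequentially through the configuration model in the style used for Theorem~\ref{thm:main_theorem}. The algorithm processes vertices in a carefully chosen order, colouring each on exposure, orienting already-revealed incident edges into it, and adding it to $S$ precisely when its revealed back-neighbours fail to carry both colours different from its own. Wormald's differential equations method would then track the joint distribution of each vertex's back-degree, colour, and already-seen neighbour colours, yielding concentration of $|S|/n$ on a value $\alpha$ determined by an ODE system analogous to (but considerably more delicate than) the one behind Theorem~\ref{thm:main_theorem}.

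The main obstacle is to drive $\alpha$ down to $1/4$ rather than the $1/3$ returned by the greedy implementation: under a uniform-random colour rule, too many vertices acquire monochromatic back-neighbourhoods and must join $S$. Countermeasures I would pursue include biasing the colour assigned at each step toward the under-represented colour in the (partially-revealed) forward neighbourhood of the current vertex; batching the algorithm so that a linear-sized, well-spread independent seed set of size close to $n/4$ is laid down first, which is feasible since the independence number of $\mathcal{G}_{n,3}$ comfortably exceeds $n/4$ a.a.s.; and invoking small-subgraph conditioning to rule out rare configurations on short cycles. Each such refinement perturbs the underlying finite Markov chain that drives the paper's non-standard application of the differential equations method, and it is not clear a priori that the resulting ODE system admits a fixed point with $\alpha = 1/4$. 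Keeping the analysis within the hypotheses of Wormald's theorem while simultaneously driving $\alpha$ all the way down to $1/4$ is, I expect, the main technical barrier, and is likely the reason the authors leave the exact asymptotic as a conjecture rather than a theorem.
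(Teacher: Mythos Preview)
The statement is a \emph{conjecture}, not a theorem, and the paper offers no proof of it. What the paper does provide is heuristic evidence: Theorem~\ref{thm:strong_sudoku} (every Sudoku set in a $3$-regular $3$-chromatic graph is strong, hence a decycling set), Theorem~\ref{thm:decycling} (a.a.s.\ $\mathcal{G}_{n,3}$ has a decycling set of size $\lceil n/4 + 1/2\rceil$), and simulations on small instances suggesting that such decycling sets can often be coloured to become Sudoku sets. There is therefore no ``paper's own proof'' to compare against.

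Your proposal is not a proof either, and to your credit you say as much in the final paragraph. What you have written is a reasonable research outline: you correctly identify the in-degree counting argument behind the lower bound~\eqref{eq:sudoku_determininstic_lower_bound2}, you see that matching it requires an acyclic orientation in which almost every non-Sudoku vertex has in-degree exactly $2$ with both other colours represented, and you propose to build such an orientation algorithmically and analyze it via the differential equations method. But the crucial step --- designing a colouring rule whose associated ODE system actually yields $\alpha = 1/4$ rather than the $1/3$ achieved by \textbf{Sudoku} --- is left entirely open, with only vague gestures toward ``biasing colours'' and ``batching.'' This is the whole problem; everything else is scaffolding. Until you can exhibit a concrete algorithm and show (even heuristically, via the limiting ODE) that it hits $1/4$, you have not advanced beyond where the paper already stands. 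The conjecture remains open.
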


We cite two previous results which lend support to Conjecture~\ref{conj:n/4}. The first, from \cite{mahmoodian1999defining}, requires an additional definition: a Sudoku set $S$ is \textit{strong} if there exists an ordering $v_{1}, v_{2}, \dots, v_{|V(G)| - |S|}$ of $|V \setminus S|$ such that for each $i \in \{1,\dots, |V(G)| - |S|\}$, vertex $v_{i}$ has neighbours of all but one colour  in $S \cup \{v_{1}, \dots, v_{i-1}\}.$ Strong Sudoku sets thus force colours as locally as possible: if we colour vertices in $V(G) \setminus S$ sequentially in the order $v_{1}, v_{2}, \dots, v_{|V(G)| - |S|}$, then at each step $i$ the colour on vertex $v_{i}$ is forced. The following theorem explains the relevance of strong Sudoku sets to our problem.

\begin{theorem}[\protect{\cite[Lemma 1]{mahmoodian1999defining}}]\label{thm:strong_sudoku}
    If $G$ is $d$-regular and $\chi(G) = d$, then every Sudoku set for $G$ is strong.
\end{theorem}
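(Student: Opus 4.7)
The plan is to prove the contrapositive by contradiction. Suppose $S$ is a Sudoku set with partial $d$-colouring $c_S$ extending uniquely to a proper $d$-colouring $c$, but no strong ordering of $V \setminus S$ exists. I will exhibit a second proper $d$-colouring $c'$ of $G$ agreeing with $c_S$ on $S$, contradicting uniqueness.

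First I would run the strong-forcing greedy procedure: set $U_{0} := S$, and while some $v \in V \setminus U_{i}$ has $d-1$ distinct colours among its neighbours in $U_{i}$, add $v$ (with its uniquely forced colour, necessarily $c(v)$) to form $U_{i+1}$. Since no strong order exists, the procedure halts at some $U$ with $W := V \setminus U \ne \emptyset$, and every $v \in W$ sees at most $d-2$ distinct colours on $N(v) \cap U$. Setting $A(v) := \{1,\ldots,d\} \setminus \{c(u) : u \in N(v) \cap U\}$, we have $|A(v)| \ge 2$. Using $d$-regularity,
\[
|A(v)| \;\ge\; d - |N(v) \cap U| \;=\; |N(v) \cap W| \;=\; \deg_{H}(v),
\]
where $H := G[W]$, so $|A(v)| \ge \max(2, \deg_{H}(v))$, and $c|_{W}$ is one $A$-list colouring of $H$.

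The goal then becomes producing a \emph{second} $A$-list colouring $c^{*}$ of $H$; combined with $c|_{U}$, this gives the required $c' \ne c$. An easy case handles much of the problem: if some $v \in W$ admits a colour $c'' \in A(v) \setminus \{c(v)\}$ not appearing on any $H$-neighbour of $v$, then recolouring $v \mapsto c''$ already produces $c^{*}$. So I may assume the tight regime in which, at every $v \in W$, the $\deg_{H}(v)$ $H$-neighbours of $v$ collectively use every colour in $A(v) \setminus \{c(v)\}$; in particular $|A(v)| \in \{\deg_{H}(v), \deg_{H}(v)+1\}$. In this regime I apply a Kempe-chain swap: pick any $v \in W$ and $c' \in A(v) \setminus \{c(v)\}$, let $K \subseteq W$ be the component of $v$ in the subgraph of $H$ induced by the vertices coloured $c(v)$ or $c'$, and interchange these two colours along $K$. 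This is automatically proper inside $H$; it extends $c_{S}$ properly to all of $G$ precisely when $\{c(v), c'\} \subseteq A(u)$ for every $u \in K$.

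The main obstacle is showing that $(v, c')$ can always be chosen so that this safety condition holds throughout $K$. The plan is to argue that otherwise every Kempe chain in $H$ encounters an ``obstruction vertex'' $u \in K$ with some $U$-neighbour of colour $c(v)$ or $c'$; then a block-by-block analysis of $H$, using the tight constraints $|A(\cdot)| \in \{\deg_{H}(\cdot), \deg_{H}(\cdot)+1\}$ together with $|A(\cdot)| \ge 2$ in the spirit of the Erd\H{o}s--Rubin--Taylor degree-choosability theorem, yields a contradiction. This structural analysis of the components of $H$ is the main technical step; the rest of the argument is routine.
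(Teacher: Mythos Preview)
The paper does not prove this theorem; it is quoted from \cite{mahmoodian1999defining} and used as a black box. So there is no in-paper proof to compare against, and I can only assess your argument on its own merits.

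Your reduction is correct and is the natural one: running the forcing procedure until it stalls produces a nonempty $W$ with lists $A(v)$ satisfying $|A(v)|\ge\max(2,\deg_H(v))$, and any second $A$-colouring of $H=G[W]$ glues with $c|_U$ to a second proper $d$-colouring extending $c_S$. The ``easy case'' (some $v$ admits an unused colour in $A(v)\setminus\{c(v)\}$) is also fine.

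The gap is exactly where you say it is, and it is a real one. In the tight regime you propose to swap along a $\{c(v),c'\}$-Kempe chain $K$ and then need $\{c(v),c'\}\subseteq A(u)$ for every $u\in K$. You do not prove that such a safe pair $(v,c')$ exists; you only announce that ``a block-by-block analysis \ldots\ in the spirit of the Erd\H{o}s--Rubin--Taylor degree-choosability theorem yields a contradiction.'' That analysis is the entire content of the lemma at this point, and you have not supplied it. It is not routine: the lists $A(\cdot)$ are heterogeneous, obstruction vertices (those with a $U$-neighbour of colour $c(v)$ or $c'$) can occur anywhere along the chain, and the constraints $|A(u)|\in\{\deg_H(u),\deg_H(u)+1\}$ by themselves do not force any particular block structure on $H$. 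Invoking ERT is suggestive but not a proof: ERT tells you when a graph with $|A(v)|\ge\deg(v)$ is $A$-colourable, not when it has two $A$-colourings, and the translation is not immediate.

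In short: the framework is right, but the proposal stops precisely at the nontrivial step and replaces it with a plan. To complete the argument you must either carry out the promised structural analysis in full, or give a different proof that $H$ with $|A(v)|\ge\max(2,\deg_H(v))$ and one $A$-colouring always admits a second one.
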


Note that if $S$ is a strong Sudoku set for a $d$-regular graph with $\chi(G) = d$, then $G \setminus S$ induces a forest. Indeed, if there were a cycle in $G \setminus S$ containing some vertex $v_{i}$, then $v_{i}$ has at most $d-2$ neighbours in $S \cup \{v_{1}, \dots, v_{i-1}\}$, meaning $S$ is not strong. In particular, any Sudoku set in this case is a \textit{decycling set}, i.e., a set of vertices whose removal from $G$ destroys all cycles. Theorem~\ref{thm:strong_sudoku} suggests decycling sets make good candidates for Sudoku sets.

Decycling sets for $\mathcal{G}_{n,d}$ have been previously studied by Bau, Wormald, and Zhao, who show in particular the following for $\mathcal{G}_{n,3}$:

\begin{theorem}[\protect{\cite[Theorem 1.1]{wormald2001decycling}}]\label{thm:decycling}
    A.a.s., $\mathcal{G}_{n,3}$ has a decycling set of size $\left\lceil \frac{n}{4} + \frac{1}{2} \right\rceil$, which is smallest possible for a cubic graph on $n$ vertices.
\end{theorem}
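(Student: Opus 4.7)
The statement has two parts: a deterministic lower bound valid for every cubic graph, and an existential upper bound for $\mathcal{G}_{n,3}$.

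\emph{Lower bound (edge counting).} Let $S \subseteq V(G)$ be a decycling set of a cubic graph $G$ on $n$ vertices. Since $G - S$ is a forest on $n - |S|$ vertices, $|E(G - S)| \leq n - |S| - 1$. Every edge of $G$ either lies in $G - S$ or is incident to $S$, and $S$ is incident to at most $3|S|$ edges, so $3n/2 = |E(G)| \leq (n - |S| - 1) + 3|S|$, which simplifies to $|S| \geq (n+2)/4$. Since $|S|$ is an integer, $|S| \geq \lceil n/4 + 1/2 \rceil$. Note that equality forces $S$ to be independent and $G - S$ to be a spanning tree of $G - S$; this rigidity will be important later.

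\emph{Upper bound (algorithm in the pairing model).} For the upper bound I would design a randomised greedy algorithm that, working in the pairing (configuration) model of $\mathcal{G}_{n,3}$, grows an induced forest $T \subseteq V$ and places the complement $S = V \setminus T$ into the decycling set. Start with a single seed vertex in $T$ and maintain a queue of ``active'' $T$-vertices with unexposed half-edges; repeatedly pull one active half-edge, pair it with a uniformly random free half-edge, and route the revealed endpoint either to $T$ (if it is unassigned, in which case its other half-edges join the queue) or to $S$ (if adding it to $T$ would create a cycle, i.e., the revealed endpoint is already in $T$). Track the number of vertices of each type (active, inactive $T$-vertices, $S$-vertices, unassigned) together with the remaining half-edge counts, and apply Wormald's differential equations method. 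Solving the resulting ODEs should give $|S|/n \to 1/4$ a.a.s., i.e., a decycling set of size $(1 + o(1)) n/4$.

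\emph{Main obstacle.} The principal technical difficulty is sharpening $(1+o(1))n/4$ to the exact value $\lceil n/4 + 1/2 \rceil$, since the DE method only delivers error terms of order $n^{1/2}$ or so. To close this gap, I would halt the main algorithm once only $o(n)$ vertices remain unassigned, then argue, using concentration of the algorithm's state together with properties of the pairing model, that the residual graph has only a sublinear number of excess edges over a spanning forest, so that a cleanup phase places only $o(n)$ additional vertices into $S$. To reach the exact integer bound, I would then exploit the rigidity from the equality case (namely, $S$ independent and $G - S$ a spanning tree) together with small-subgraph conditioning or a switching argument on the configuration model to show that the algorithm's output can be perturbed to an extremal configuration a.a.s. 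This final matching-style step, rather than the differential equations analysis, is where I expect the main effort to lie.
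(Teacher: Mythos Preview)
This theorem is not proved in the paper; it is quoted from Bau, Wormald and Zhou \cite{wormald2001decycling}, and the only information the paper gives about the original argument is that it ``provides a polynomial-time algorithm which, a.a.s., finds a smallest decycling set in a random graph obtained by taking the union of a Hamilton cycle and a random perfect matching on $n$ vertices.'' So there is no in-paper proof to compare against; your proposal should be weighed against that description of the original.

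Your lower bound is correct and is exactly the standard edge-counting argument. For the upper bound, your plan diverges from the original in a significant structural way: you work in the plain pairing model and grow a forest greedily, whereas Bau--Wormald--Zhou work in the contiguous model $H_n \cup M_n$ (Hamilton cycle plus random perfect matching). That choice is not cosmetic. Having the Hamilton cycle in hand from the outset gives a backbone along which one can design the decycling set explicitly and control parity, which is precisely what is needed to hit the exact value $\lceil n/4 + 1/2\rceil$ rather than just $(1+o(1))n/4$. Your DE-method sketch in the pairing model is a reasonable route to the asymptotic bound, but your ``cleanup'' and ``perturb to an extremal configuration via switching/small-subgraph conditioning'' step is where the real content would have to be, and as written it is not a plan so much as a hope: small-subgraph conditioning controls contiguity and variance of counts, not the exact size of a greedily built set, and there is no clear mechanism by which a switching argument converts an $n/4 + O(n^{1/2})$ output into an $n/4 + O(1)$ one. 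If you want the exact bound, I would follow the original and move to the $H_n \cup M_n$ model, where the Hamiltonian structure does the heavy lifting.
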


The proof of Theorem~\ref{thm:decycling} provides a polynomial-time algorithm which, a.a.s., finds a smallest decycling set in a random graph obtained by taking the union of a Hamilton cycle and a random perfect matching on $n$ vertices. (This random graph model is contiguous with the random $3$-regular graph---see Subsection~\ref{sec:models} for formal definitions.) This allowed us to run simulations. We conducted an experiment in which we independently generated many random $3$-regular graphs on $60$ vertices, built from a Hamilton cycle and random matching as above. For roughly half of them, we were able to find a colouring of the decycling set that implied a unique extension to the whole graph. If none was found, it was almost always possible to extend the closest candidate to a Sudoku set by adding a single vertex to it. Figure~\ref{fig:enter-label} illustrates this with an example of a $3$-regular graph on $70$ vertices, where the Sudoku set has size $18$, the smallest possible value. These observations inspired us to make Conjecture~\ref{conj:n/4}. 

\begin{figure}[h]
    \centering
    \includegraphics[width=0.49\linewidth]{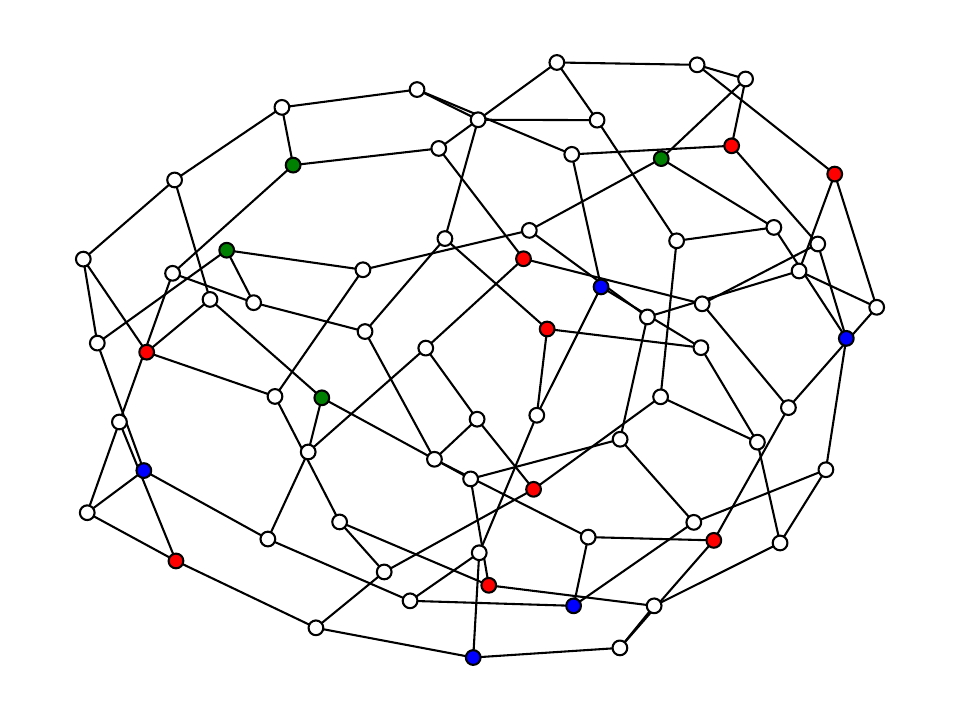}
    \includegraphics[width=0.49\linewidth]{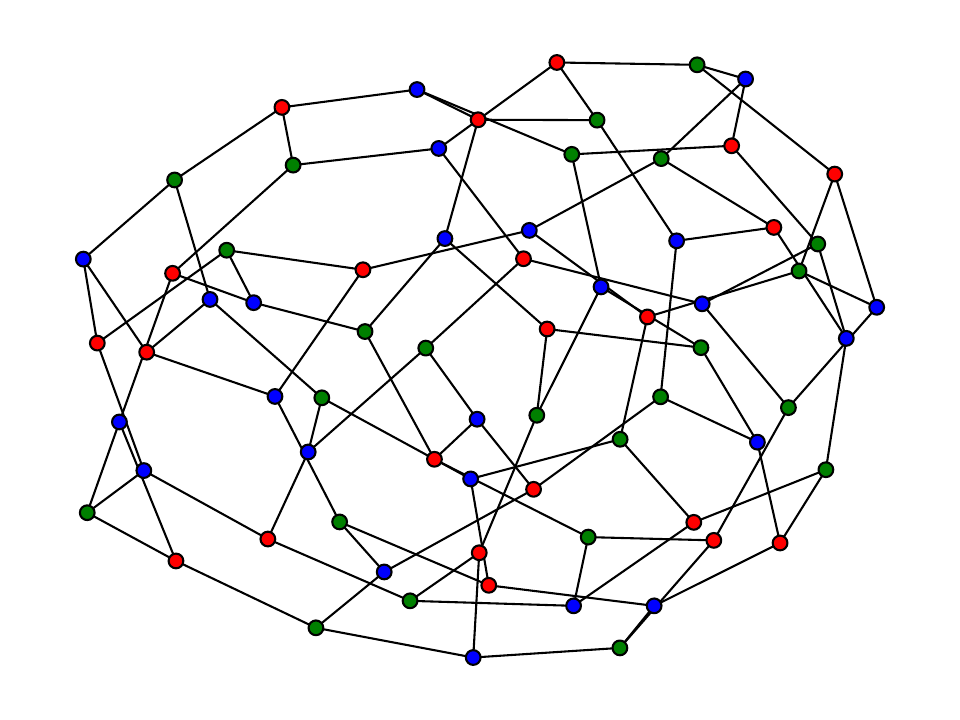}
    \caption{The left graph illustrates a Sudoku set of a random $3$-regular graph with $70$ vertices, generated using a Hamilton cycle and a random perfect matching. The Sudoku set shown is of minimum size for this graph. The right graph shows the proper colouring of the whole graph determined by the Sudoku set.}
    \label{fig:enter-label}
\end{figure}

\subsection{On models and contiguity}\label{sec:models}

Our results are asymptotic by nature, that is, we will assume that $n \to \infty$. We say that an event holds \emph{asymptotically almost surely} (\emph{a.a.s.}) if it holds with probability tending to one as $n \to \infty$.

For $d \geq 1$, we let $\mathcal{G}_{n,d}$ be the uniform probability space on all $d$-regular graphs with vertex set $[n]$. (Often, we will abuse notation and speak of $\mathcal{G}_{n,d}$ itself as a random $d$-regular graph.) The model $\mathcal{G}_{n,d}$ is often studied via a more general \textit{configuration model} $\mathcal{G}_{n,d}^{*}$, introduced by Bollob{\'a}s in~\cite{bollobas1980probabilistic}, which is a probability space over $d$-regular multigraphs on vertex set $[n]$. $\mathcal{G}_{n,d}^{*}$ enjoys the following properties with respect to $\mathcal{G}_{n,d}$:
    \begin{enumerate}[label = (\roman*)]
        \item for fixed $d$, the probability that $\mathcal{G}_{n,d}^{*}$ is simple approaches a positive constant, dependent on $d$ but not on $n$, as $n \to \infty$, and;
        \item conditioned on being simple, $\mathcal{G}_{n,d}^{*}$ is distributed uniformly over all simple $d$-regular graphs on vertex set $[n]$, i.e., conditioned on being simple $\mathcal{G}_{n,d}^{*}$ has the same distribution as $\mathcal{G}_{n,d}$.
    \end{enumerate}
Thus, to show that some graph property $\mathcal{A}_{n}$ holds a.a.s.\ in $\mathcal{G}_{n,d}$, it suffices to show that a corresponding multigraph property $\mathcal{A}_{n}^{*}$ holds a.a.s.\ for $\mathcal{G}_{n,d}^{*}$. This is essentially the approach we take to prove Theorem~\ref{thm:main_theorem}, though we make a small modification to incorporate a more convenient model. Let $\mathcal{G}_{n,d}'$ be the random multigraph $\mathcal{G}_{n,d}^{*}$ conditioned to have no self-loops (though with multiedges still allowed). Then by \cite[(3.3)]{janson1995random}, $\mathcal{G}_{n,3}'$ is \textit{contiguous} with the random multigraph $G_{n}$ defined as the union of a random Hamilton cycle $H_{n}$ on vertex set $[n]$ and a random perfect matching $M_{n}$ on $[n]$. Contiguity here means that an event holds a.a.s.\ for $\mathcal{G}_{n,3}'$ if and only if it holds a.a.s.\ for $G_{n}.$ 

Now, properties (i) and (ii) above are equally valid when $\mathcal{G}_{n,d}^{*}$ is replaced with $\mathcal{G}_{n,d}'$. Therefore, using contiguity of $\mathcal{G}_{n,3}'$ and $G_{n}$, we have the following: if $\mathcal{A}_{n}^{*}$ is some multigraph property which restricts to a graph property $\mathcal{A}_{n}$ on simple graphs, then if $\mathcal{A}_{n}^{*}$ holds a.a.s.\ in $G_{n}$, $\mathcal{A}_{n}$ holds a.a.s.\ in $\mathcal{G}_{n,3}$. We will show that a.a.s., $G_{n}$ has a proper $3$-colouring $c$ which is determined by its values on a set $S \subseteq [n]$ of size $|S| \leq (1+o(1))n/3$. By our reasoning above, the same holds a.a.s.\ for $\mathcal{G}_{n,3}$, and this, along with the well-known fact that $\chi(\mathcal{G}_{n,3}) = 3$ a.a.s., implies Theorem \ref{thm:main_theorem}.

\subsection{Concentration Inequalities}

We will make use of two-well known concentration inequalities. The first is a generalization of the Chernoff bound to sums of negatively correlated $\{0,1\}$-random variables. A collection $Y_{1}, Y_{2},\dots,Y_{n}$ of $\{0,1\}$-random variables is \textit{negatively correlated} if for all $J \subseteq [n]$, 
$$
    \prob(Y_{j} = 1\, \forall \, j \in J) \leq \prod_{j \in J}\prob(Y_{j} = 1).
$$
Note that collections of independent $\{0,1\}$-random variables are negatively correlated under this definition.

\begin{proposition}[Chernoff bounds with negative correlation, \protect{\cite[Theorems 1.10.23 and 1.10.24]{Doerr2020}}]\label{prop:chernoff}
    Let $Y_{1},Y_{2},\dots,Y_{n}$ be $\{0,1\}$-random variables such that both $\{Y_{j}\}_{j=1}^{n}$ and $\{1-Y_{j}\}_{j=1}^{n}$ are negatively correlated. Let $Y = \sum_{j=1}^{n}Y_{j}.$ Then for any $\lambda > 0$,
    $$
        \prob(Y \leq \E[Y] - \lambda) \leq e^{-\frac{\lambda^{2}}{2\E[Y]}}
    $$
    and 
    $$
        \prob(Y \geq E[Y] + \lambda) \leq e^{-\frac{\lambda^{2}}{2(\E[Y] + \lambda/3)}}.
    $$
    Consequently, if $0 \leq \lambda \leq \frac{3\E[Y]}{2}$, then 
    $$
        \prob(|Y - \E[Y]| \geq \lambda) \leq 2e^{-\frac{\lambda^{2}}{3\E[Y]}}.
    $$
\end{proposition}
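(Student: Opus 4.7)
The plan is to establish both tail bounds via the classical exponential-moment (Chernoff) method: apply Markov's inequality to $e^{\pm tY}$, bound the resulting moment generating function, and then optimize over $t$. The role of the negative correlation hypothesis is precisely to recover the same MGF bound one would obtain in the independent case, via a signed-expansion trick that exploits $Y_j \in \{0,1\}$.

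For the upper tail, I would fix $t > 0$ and use $e^{tY_j} = 1 + (e^t-1)Y_j$ to expand
$$
\prod_{j=1}^n e^{tY_j} = \sum_{J \subseteq [n]} (e^t-1)^{|J|} \prod_{j \in J} Y_j.
$$
Since $(e^t-1)^{|J|} \geq 0$ for every $J$, taking expectations and applying negative correlation of $\{Y_j\}_{j=1}^n$ yields
$$
\E[e^{tY}] \leq \prod_{j=1}^n \bigl(1 + (e^t-1)\E[Y_j]\bigr) \leq \exp\bigl((e^t-1)\E[Y]\bigr),
$$
using $1+x \leq e^x$. Combining with $\prob(Y \geq \E[Y]+\lambda) \leq e^{-t(\E[Y]+\lambda)} \E[e^{tY}]$, taking $t = \ln(1+\lambda/\E[Y])$, and invoking the standard analytic bound $(1+x)\ln(1+x)-x \geq x^2/(2+2x/3)$ on $x \geq 0$ produces the claimed upper-tail inequality.

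For the lower tail, I would work with $\E[e^{-tY}]$ for $t > 0$, re-expressing $e^{-tY_j} = e^{-t} + (1-e^{-t})(1-Y_j)$ and expanding
$$
\prod_{j=1}^n e^{-tY_j} = \sum_{J \subseteq [n]} e^{-t(n-|J|)} (1-e^{-t})^{|J|} \prod_{j \in J}(1-Y_j).
$$
All coefficients are nonnegative, so negative correlation of $\{1-Y_j\}_{j=1}^n$ gives
$$
\E[e^{-tY}] \leq \prod_{j=1}^n \bigl(1 - (1-e^{-t})\E[Y_j]\bigr) \leq \exp\bigl(-(1-e^{-t})\E[Y]\bigr).
$$
Markov's inequality then yields $\prob(Y \leq \E[Y]-\lambda) \leq \exp\bigl(t(\E[Y]-\lambda) - (1-e^{-t})\E[Y]\bigr)$; optimizing at $t = -\ln(1-\lambda/\E[Y])$ and using $(1-x)\ln(1-x)+x \geq x^2/2$ on $x \in [0,1)$ yields the bound $\exp(-\lambda^2/(2\E[Y]))$.

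The final two-sided estimate is a union bound: when $\lambda \leq 3\E[Y]/2$, the upper-tail exponent $\lambda^2/(2(\E[Y]+\lambda/3))$ is at least $\lambda^2/(3\E[Y])$, while the lower-tail exponent $\lambda^2/(2\E[Y])$ trivially exceeds the same quantity, so their sum is at most $2e^{-\lambda^2/(3\E[Y])}$. The main conceptual subtlety, and in my view the only genuine obstacle, lies in the lower tail: a direct attempt to bound $\E[e^{-tY}]$ using only negative correlation of $\{Y_j\}$ fails because the expansion of $\prod (1+(e^{-t}-1)Y_j)$ has coefficients of alternating sign, and $\E[\prod_{j \in J} Y_j] \leq \prod_{j \in J}\E[Y_j]$ pushes the wrong way on the odd-cardinality terms. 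The reparametrization in terms of $1-Y_j$ restores uniformly nonnegative coefficients and lets the same expansion argument go through, which is exactly why the hypothesis bundles negative correlation of $\{1-Y_j\}$ alongside that of $\{Y_j\}$.
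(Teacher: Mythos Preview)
Your argument is correct and is the standard exponential-moment derivation of Chernoff bounds under negative correlation; in particular the reparametrization $e^{-tY_j}=e^{-t}+(1-e^{-t})(1-Y_j)$ is exactly the right way to make the lower-tail expansion have nonnegative coefficients so that the hypothesis on $\{1-Y_j\}$ applies. The paper does not supply its own proof of this proposition---it is quoted verbatim from \cite[Theorems~1.10.23 and~1.10.24]{Doerr2020} and used as a black box---so there is no in-paper argument to compare against; your write-up simply fills in what the cited reference contains.
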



We also use Azuma's inequality to control the large deviations of sub-and super-martingales.

\begin{proposition}[Azuma's inequality, \protect{\cite[Lemma 4.1]{wormald1999differential}}]\label{prop:azuma}
    Suppose $M_{0}, M_{1}, M_{2},\dots$ is a submartingale such that $|M_{j+1} - M_{j}| \leq C$ for all $j \geq 0$. Then for any $\lambda \geq 0$
    $$
        \prob(M_{n} \leq M_{0} - \lambda) \leq e^{-\frac{\lambda^{2}}{2C^{2}n}}.
    $$
    If $M_{0}, M_{1}, M_{2},\dots$ is a super-martingale such that $|M_{j+1}-M_{j}| \leq C$ for all $j \geq 0$, then for any $\lambda \geq 0$
    $$
        \prob(M_{n} \geq M_{0} + \lambda) \leq e^{-\frac{\lambda^{2}}{2C^{2}n}}.
    $$
\end{proposition}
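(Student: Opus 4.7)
The plan is to use the classical exponential-moment (Chernoff--Bernstein) method, specialized to martingale differences via a Hoeffding-type bound and iterated conditioning. First I would reduce the two bullets to one. If $(M_{j})$ is a submartingale with $|M_{j+1}-M_{j}|\leq C$, then $(-M_{j})$ is a supermartingale with the same increment bound, and the lower-tail statement for $(M_{j})$ is exactly the upper-tail statement for $(-M_{j})$. So it suffices to prove the upper-tail bound in the supermartingale case.

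Write $D_{j} = M_{j} - M_{j-1}$ and let $(\mathcal{F}_{j})$ be the underlying filtration. The supermartingale hypothesis gives $\E[D_{j}\mid \mathcal{F}_{j-1}] \leq 0$, together with $|D_{j}|\leq C$. The heart of the proof is the one-step estimate
$$\E\!\left[e^{tD_{j}}\mid \mathcal{F}_{j-1}\right] \leq e^{t^{2}C^{2}/2} \qquad \text{for every } t>0.$$
This I would derive by convexity of $x\mapsto e^{tx}$ on $[-C,C]$: for $x\in[-C,C]$,
$$e^{tx} \leq \frac{C-x}{2C}\, e^{-tC} + \frac{C+x}{2C}\, e^{tC}.$$
Taking conditional expectation and using $\E[D_{j}\mid\mathcal{F}_{j-1}]\leq 0$ together with $\sinh(tC)>0$ gives $\E[e^{tD_{j}}\mid \mathcal{F}_{j-1}] \leq \cosh(tC) \leq e^{t^{2}C^{2}/2}$, where the last step is the usual Taylor bound on $\cosh$.

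Now I would apply Markov's inequality to $e^{t(M_{n}-M_{0})}$ for $t>0$, obtaining $\prob(M_{n}-M_{0}\geq \lambda) \leq e^{-t\lambda}\,\E[e^{t(M_{n}-M_{0})}]$, and peel off one increment at a time by conditioning on $\mathcal{F}_{n-1}$:
$$\E\!\left[e^{t(M_{n}-M_{0})}\right] = \E\!\left[e^{t(M_{n-1}-M_{0})}\,\E[e^{tD_{n}}\mid \mathcal{F}_{n-1}]\right] \leq e^{t^{2}C^{2}/2}\,\E\!\left[e^{t(M_{n-1}-M_{0})}\right].$$
Iterating $n$ times yields $\E[e^{t(M_{n}-M_{0})}]\leq e^{nt^{2}C^{2}/2}$, so $\prob(M_{n}\geq M_{0}+\lambda) \leq e^{-t\lambda + nt^{2}C^{2}/2}$. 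Optimizing in $t$ at $t=\lambda/(nC^{2})$ delivers the advertised bound $e^{-\lambda^{2}/(2C^{2}n)}$.

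No step here is genuinely difficult; the whole argument is one clean chain of inequalities. The only mildly delicate point worth flagging is that we do not have $\E[D_{j}\mid\mathcal{F}_{j-1}]=0$ but only $\leq 0$, so one must verify that the Hoeffding-type conditional MGF bound still goes through. It does, precisely because $\sinh(tC)>0$ for $t>0$, so a nonpositive conditional mean only strengthens the bound; this is exactly what allows the inequality to hold for sub-/supermartingales rather than only for true martingales.
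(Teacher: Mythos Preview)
Your argument is correct and is the standard exponential-moment proof of Azuma's inequality. Note, however, that the paper does not supply its own proof of this proposition: it is stated with a citation to \cite[Lemma 4.1]{wormald1999differential} and used as a black box, so there is nothing to compare against beyond observing that your write-up matches the classical derivation one finds in that reference and elsewhere.
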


\subsection{A word on floors and ceilings}

As is typical in the field of random graphs, for expressions which clearly must be integers (e.g., indices in the set $[n]$) we round up or down to the nearest integer but do not specify which. In all cases that we use this simplification, it will be clear that the choice of which way to round has no significant effect on the final computation.  

\section{The algorithm \textbf{Sudoku}}\label{sec:the_algorithm}

Here we introduce our primary algorithm, \textbf{Sudoku}, which, given a multigraph $G$ that is the union of a Hamilton cycle $H$ and perfect matching $M$ on vertex set $[n]$, produces a proper $3$-colouring $c$ of $G$ along with a Sudoku set $S$ for $c$. \textbf{Sudoku} is a variant of a simple greedy colouring algorithm which we describe below. For now, we state everything in terms of deterministic graphs; later, when analyzing \textbf{Sudoku} on the random multigraph $G_{n} = H_{n} \cup M_{n}$, we will couple the execution of the algorithm with a random graph process which reveals the edges of $M_{n}$ one-at-a-time.  

\subsection{Greedily colouring Hamiltonian cubic graphs}\label{sec:greedy_alg}

For simplicity, we assume $H$ has cyclic ordering $(12\cdots n)$. For any $i \in [n]$, we let $p(i)$ be the unique $j \in [n]$ such that $\{i,j\} \in M$---i.e., $j$ is the partner of $i$ in $M$. The generic greedy colouring algorithm  sequentially assigns colours in $\{1,2,3\}$ to each vertex $i \in [n]$, starting from $1$ and proceeding in the order of $H$.

Begin by selecting $c(1)$ from $\{1,2,3\}$. For $i \in \{2,3,\dots,n-1\}$, we then select $c(i)$ from $\{1,2,3\} \setminus \{c(i-1), c(p(i))\}$ if $p(i) < i$, or from $\{1,2,3\} \setminus \{c(i-1)\}$ if $p(i) > i$. The selection at each step can be made randomly, deterministically, with a mixture of the two, etc. However the selections are made, it is clear that this process constructs a partial proper $3$-colouring $c$ of the vertices in $[n-1]$. When the process reaches vertex $n$, it may be the case that $n$ has neighbours in three distinct colour classes, which prevents us from completing the colouring. 

There are a variety of ways one could attempt to circumvent this issue. We propose one which is well-suited to our needs in the random setting. Suppose that for some interval of vertices $I = \{i, i+1, \dots, i+2j-1\}$ with $1 \le i < i+2j-1 \le n$, we have $p(i) = i+ 2j - 1$ and $p(i') \not\in I$ for all $i' \in I \setminus \{i, i+2j-1\}$. Then, the subgraph of $G$ induced by $I$ is an even cycle. We may greedily colour the vertices in $[n] \setminus I$ as follows. We first greedily colour the interval $\{1,2,\dots, i-1\}$. (Note that if $i = 1$, this interval is empty and thus there are no vertices to colour.) We then greedily colour the interval $\{i+2j, i+ 2j+1, \dots n\}$ in reverse order (this interval is empty if $i+2j-1 = n$). Since $I$ induces a cycle in $G$, after colouring $[n] \setminus I$ each vertex in $I$ has exactly one neighbour that is coloured, leaving $2$ colours available. It is well-known (see, e.g., \cite{erdos1979choosability}) that even cycles are $2$-list colourable, and hence the colouring can be extended to $I$.

When colouring the random graph $G_{n}$, we will show that a.a.s.\ there is some interval of consecutive vertices in $\{n - n/\log\log n,\dots, n\}$ which induces an even cycle. Then, we will carefully colour vertices $1$ through $n - n/\log\log n$, identifying vertices that need to be put into the Sudoku set. Once we are done, the above property ensures that a.a.s.\ the coloring can be completed. During this \textit{completion phase}, we may simply put all of the last $n/\log\log n$ vertices into the Sudoku set, since they will not affect its asymptotic size.

\subsection{\textbf{Sudoku}}\label{sec:sudoku_alg}

Here we outline \textbf{Sudoku}. While the algorithm is somewhat complicated, it is nonetheless a direct instance of the greedy algorithm defined in the previous section. \textbf{Sudoku} sequentially builds a Sudoku set $S \subseteq [n]$ in addition to the colouring $c$. We will define \textbf{Sudoku} to start at a general vertex $i_{0} \in [n]$ and end at a general vertex $i_{1}$ satisfying $i_{0} \leq i_{1} \leq n-1$. We assume that vertices in $[i_{0}]$ are properly $3$-coloured by some partial colouring $c_{0}$, and that there is a set $S_{0} \subseteq [i_{0}]$ that is a Sudoku set for $c_{0}$ on the subgraph induced by vertices in $[i_{0}].$ 

\textbf{Sudoku} constructs a colouring $c$ that extends $c_{0}$ and a set $S$ that extends $S_{0}$. We let $S(i_{0}) = S_{0}$, and for $i \in \{i_{0}+1, \dots, i_1\}$ let $S(i)$ be the set constructed by the algorithm at time $i$, i.e., just after vertex $i$ is processed. We also maintain a pointer which at time $i$ points to some vertex $j \in [i]$---we denote this by $\textbf{ptr}(i) = j$. The pointer will maintain the property that if $\textbf{ptr}(i) = j$, then the set $S(i) \cap [j]$ is a Sudoku set for the colouring $c$ restricted to the subgraph induced by vertices in $[j]$. Thus, at any time $i$ during the algorithm, any vertex in $[\textbf{ptr}(i)]$ is either in $S(i) \cap [\textbf{ptr}(i)]$, or has its colour forced by other vertices in $S(i) \cap [\textbf{ptr}(i)]$. We necessarily have $\textbf{ptr}(i_{0}) = i_{0}$, as we assume that $S(i_{0}) = S_{0}$ is a Sudoku set for the colouring on $[i_{0}]$.

The algorithm proceeds in an alternating sequence of two types of \textit{runs}, where the type of the run at time $j$ is determined by $\textbf{ptr}(i)$, i.e., the location of the pointer with respect to vertex $i$. Each type of run has its own colouring rules, and a run of one type ends (and a run of the other type begins) when we encounter a \textit{bad} vertex, where \textit{bad} is also defined differently for each type of run. We give full details below.

\vspace{0.5cm}

\textbf{Runs of type $A$}

A run of type $A$ is defined as a sequence of consecutive vertices $i, i+1, \dots, i+r$ satisfying $\textbf{ptr}(j) = j$ for all $j \in \{i, i+1, \dots, i+r\}$. Suppose that we have processed all vertices in $[i]$ and that we are in a run of type $A$, i.e., $\textbf{ptr}(i) = i$. There are two possibilities for $i+1$, each with corresponding update rules:

\begin{enumerate}[label = ($A$\arabic*)]
    \item $i+1$ is \textit{good} if $p(i+1) < i+1$ and $c(p(i+1)) \neq c(i)$.

        \begin{arrowlist}
            \item Assign $c(i+1) = k$ for $k$ the unique colour not in $\{c(p(i+1)), c(i)\}$;
            \item set $S(i+1) = S(i)$;
            \item set $\textbf{ptr}(i+1) = i+1.$
        \end{arrowlist}

    \item $i+1$ is \textit{bad} if:
        \begin{enumerate}[label = (\alph*)]
            \item $p(i+1) > i+1$; or
            \item $p(i+1) < i+1$ and $c(p(i+1)) = c(i)$.
        \end{enumerate}
        \begin{arrowlist}
            \item Choose $c(i+1)$ randomly from $\{1,2,3\} \setminus \{c(i)\}$;
            \item set $S(i+1) = S(i)$;
            \item set $\textbf{ptr}(i+1) = \textbf{ptr}(i) = i$.
        \end{arrowlist}
\end{enumerate}

Thus a good vertex in a run of type $A$ extends the run, while a bad vertex ends it and becomes the first vertex of a run of type $B$. Note that a good vertex has its colour forced by vertices in $[i]$, as the condition $\textbf{ptr}(i) = i$ ensures that each of $i$ and $p(i+1) \in [i]$ is either already in $S(i)$, or has its colour forced by other vertices in $[i]$. (This is proved formally in Subsection~\ref{sec:correctness}.)

\vspace{0.5cm}

\textbf{Runs of type $B$}

A run of type $B$ is a sequence $i, i+1, \dots, i+r$ with $\textbf{ptr}(j) < j$ for all $j \in \{i, i+1, \dots, i+r\}$. Assume we have processed through $i$ and $\textbf{ptr}(i) < i$, so we are in a run of type $B$. Define the set $C_{i+1}$ by 
$$
    C_{i+1} = \begin{cases} \{c(i-1), c(i)\} \quad& p(i) > i\\
    \{c(i), c(p(i))\} \quad& p(i) < i.
    \end{cases}
$$
(Provided that $c$ is a proper colouring on $[i]$, we have $|C_{i+1} = 2|$.) The possibilities for $i+1$ are:

\begin{enumerate}[label = ($B$\arabic*)]
    \item $i+1$ is \textit{good} if $p(i+1) \leq \textbf{ptr}(i)$ and $c(p(i+1)) \in C_{i+1}.$
        \begin{arrowlist}
            \item Assign $c(i+1) = k$ for $k$ the unique colour not in $C_{i+1}$;
            \item set $S(i+1) = S(i)$;
            \item set $\textbf{ptr}(i+1) = \textbf{ptr}(i)$.
        \end{arrowlist}
    \item $i+1$ is \textit{bad} if 
        \begin{enumerate}[label = (\alph*)]
            \item $p(i+1) > i+1$; or
                \begin{arrowlist}
                    \item Assign  $c(i+1) = k$ for $k$ the unique colour not in $C_{i+1}$;
                    \item set $S(i+1) = S(i) \cup \{i+1\}$;
                    \item set $\textbf{ptr}(i+1) = i+1$.
                \end{arrowlist}
            \item $p(i+1) \leq \textbf{ptr}(i)$ and $c(p(i+1)) \not\in C_{i+1}$; or
                \begin{arrowlist}
                    \item Assign $c(i+1) = k$ for $k$ the unique colour not in $\{c(i), c(p(i+1))\}$;
                    \item set $S(i+1) = S(i) \cup \{i\}$;
                    \item set $\textbf{ptr}(i+1) = i+1$.
                \end{arrowlist}
            \item $\textbf{ptr}(i) + 1 \leq p(i+1) \leq i$ and $c(p(i+1)) \in C_{i+1}$; or
                \begin{arrowlist}
                    \item Assign $c(i+1) = k$ for $k$ the unique colour not in $C_{i+1}$;
                    \item set $S(i+1) = S(i) \cup \{i+1\}$;
                    \item set $\textbf{ptr}(i+1) = i+1.$
                \end{arrowlist}
            \item $\textbf{ptr}(i) +1 \leq p(i+1) \leq i$ and $c(p(i+1)) \not\in C_{i+1}$
                \begin{arrowlist}
                    \item Assign $c(i+1) =  k$ for $k$ the unique colour not in $\{c(i), c(p(i+1))\}$;
                    \item set $S(i+1) = S(i) \cup \{i, i+1\}$;
                    \item set $\textbf{ptr}(i+1) = i+1.$
                \end{arrowlist}
        \end{enumerate}
\end{enumerate}

As with runs of type $A$, a good vertex in a run of type $B$ extends the run, while a bad vertex ends it (and begins a run of type $A$). A run of type $B$ can be interpreted as a second chance: after a run of type $A$ ends with a bad vertex starting a run of type $B$, we get another attempt to force the colour of this vertex in the run of type $B$. We remark that any run of either type can have at most one vertex with a forward edge, and that vertex can only be the first vertex in the run. Indeed, vertices with forward edges are always bad, and thus they necessarily end the current run and become the first vertex of the next one. In particular, this means that in cases ($B2$c) and ($B2$d) above, we necessarily have $p(i+1) = \textbf{ptr}(i+1).$

Colours on vertices in a run of type $B$ are (eventually) forced by vertices with larger indices. Suppose that $i$ is in a run of type $B$, and that $i+1$ is good. Then we colour $i+1$ with the unique colour $k$ not in $C_{i+1}$, at which point vertex $i$ sees two different colours among its neighbours, one of which must be in $[\textbf{ptr}(i)]$; thus if $i+1$ is somehow forced to have colour $k$, $i$ will be forced to have colour $c(i)$. The colouring rules in case ($B2$) ensure that the chain of dependencies eventually stops when we reach a bad vertex, and that we may colour appropriately so that all vertices in the run have their colours forced. 

\subsection{Correctness of the algorithm and bound on the size of $S$}\label{sec:correctness}

It is clear that \textbf{Sudoku} produces a partial proper $3$-colouring $c$ which extends $c_{0}$ to vertices in $\{i_{0}, i_{0}+1, \dots,i_{1}\}$. Here we show the following.

\begin{lemma}\label{lem:correctness}
    Let $c_{0}$ be a proper partial $3$-colouring of the subgraph of $G$ induced by $[i_{0}]$, let $S_{0}$ be a Sudoku set for $c_{0}$ on $[i_{0}]$, and let \textup{\textbf{Sudoku}} run from $i_{0}$ (with $S(i_{0}) = S_{0}$) until vertex $i_{1} \leq n-1$. Let $c$ be the extension of $c_{0}$ produced by \textup{\textbf{Sudoku}}. Then $S(i_{1})$ is a Sudoku set for $c$ restricted to the subgraph induced by $[\textup{\textbf{ptr}}(i_{1})]$.
\end{lemma}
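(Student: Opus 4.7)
The plan is to prove the lemma by induction on $i$ from $i_{0}$ to $i_{1}$, maintaining the invariant that $S(i) \cap [\textbf{ptr}(i)]$ is a Sudoku set for $c$ restricted to the subgraph induced by $[\textbf{ptr}(i)]$. The base case $i = i_{0}$ is exactly the hypothesis of the lemma, since $\textbf{ptr}(i_{0}) = i_{0}$ and $S(i_{0}) = S_{0}$.

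For the inductive step I would case-split on which update rule is applied at step $i+1$. Cases (A2) and (B1) are immediate: both leave $\textbf{ptr}$ and $S$ unchanged, so the invariant for $i$ directly gives the invariant for $i+1$. In case (A1), $\textbf{ptr}$ advances from $i$ to $i+1$ while $S$ is unchanged; for any proper $3$-colouring $c'$ of the subgraph induced by $[i+1]$ agreeing with $c$ on $S(i) \cap [i]$, the inductive hypothesis forces $c' = c$ on $[i]$, so in particular $c'(i) = c(i)$ and $c'(p(i+1)) = c(p(i+1))$; the (A1) goodness condition ensures these two colours differ, forcing $c'(i+1) = c(i+1)$.

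The bulk of the work is case (B2). Let $s := \textbf{ptr}(i) + 1$, so $\{s, s+1, \dots, i\}$ is the current type-$B$ run. I would first record two structural facts: (i) for every $j \in \{s+1, \dots, i\}$ the transition at step $j$ was (B1), so $p(j) \leq s - 1$ and $c(p(j)) \in C_{j}$; and (ii) in sub-cases (B2c) and (B2d) one necessarily has $p(i+1) = s$, because if $p(i+1) = k$ for some $k \in \{s+1,\dots,i\}$ then (B1) applied at step $k$ would force $p(k) = i+1 \leq s-1$, a contradiction. Now given any proper $3$-colouring $c'$ of the subgraph induced by $[i+1]$ that agrees with $c$ on $S(i+1) \cap [i+1]$, the inductive hypothesis yields $c' = c$ on $[s-1]$, and by construction $c' = c$ on $S(i+1) \setminus S(i) \subseteq \{i, i+1\}$.

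It then suffices to propagate the equality $c' = c$ backward from $j = i+1$ down to $j = s$ along the run. For each $j \in \{s+1,\dots,i\}$, once $c'(j+1) = c(j+1)$ has been established, the rule $c(j+1) \notin C_{j+1} = \{c(j), c(p(j))\}$ combined with fact (i) and $c'(p(j)) = c(p(j))$ (by the inductive hypothesis, since $p(j) \leq s-1$) shows that vertex $j$ has two neighbours in $[i+1]$ with distinct known colours, forcing $c'(j) = c(j)$. For $j = s$ one uses either the matching neighbour $p(s)$ when $p(s) < s$ (case (A2b)), or the Hamilton neighbour $s-1$ together with $s+1$ when $p(s) > s$ (case (A2a)), exploiting in the latter case the identity $c(s+1) \notin C_{s+1} = \{c(s-1), c(s)\}$. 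I would then separately verify the degenerate case $i = s$ in each of the four (B2) sub-cases, checking that the vertices adjoined to $S$ already supply the starting data the argument needs.

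I expect the main obstacle to be simply the bookkeeping in case (B2): one must verify that in each of the four sub-cases (B2a)--(B2d) the specific vertex or vertices adjoined to $S$ provide the correct initial data for the backward propagation, and that the argument survives both the length-one run $i = s$ and the variant in which the first vertex of the run has a forward matching edge landing at or beyond $i+1$.
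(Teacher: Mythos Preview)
Your proposal is correct and follows essentially the same approach as the paper's proof. Both argue by induction on $i$, observe that only cases ($A1$) and ($B2$) advance the pointer, dispatch ($A1$) directly, and for ($B2$) establish a backward-propagation argument along the type-$B$ run: each vertex $j\in\{s,\dots,i\}$ has a neighbour in $[s-1]$ whose colour differs from $c(j+1)$ (because $j+1$ was coloured via ($B1$), giving $c(j+1)\notin C_{j+1}$), so once the colour at the top of the run is pinned down by the vertices newly added to $S$, all colours below it are forced. Your structural fact (ii), that $p(i+1)=s$ in sub-cases ($B2$c) and ($B2$d), is exactly the remark the paper makes just before the proof; your more explicit treatment of the starting data in each of the four ($B2$) sub-cases and of the degenerate case $i=s$ is slightly more detailed than the paper's presentation but otherwise identical in content.
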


In particular, if we set $S = S(i_{1}) \cup \{i_{1}\}$, then $S$ is a Sudoku set for the colouring $c$ restricted to the subgraph induced by $[i_{1}]$.

\begin{proof}
    We let $\mathcal{P}_{i}$ be the property that $S(i)$ is a Sudoku set for $c$ on the subgraph induced by $[\textbf{ptr}(i)]$. We show inductively that $\mathcal{P}_{i_{1}}$ holds. Note that $\mathcal{P}_{i_{0}}$ holds by assumption. Now, let $i \geq i_{0}$ and suppose that $\mathcal{P}_{i}$ holds. It suffices to assume that for vertex $i+1$ we are in case ($A1$) or case ($B2$), as these are the only cases in which the pointer moves. Verifying that $\mathcal{P}_{i+1}$ holds for case ($A1$) is straightforward. We thus assume case ($B2)$, i.e., that $\textbf{ptr}(i) < i$ and that $i+1$ is bad. In this case, the pointer moves to $i+1$, so we must show that $S(i+1)$ is a Sudoku set for $c$ on the subgraph induced by $[i+1].$

    Note that every $j \in \{\textbf{ptr}(i) + 1, \dots, i\}$ has a neighbour in $[\textbf{ptr}(i)]$---we have $p(j) \in [\textbf{ptr}(i)]$ for $j > \textbf{ptr}(i) + 1$, and $\textbf{ptr}(i)$ itself is adjacent to $j$ for $j = \textbf{ptr}(i) + 1$. Further, for each $j \in \{\textbf{ptr}(i) + 1,\dots,i-1\}$, $c(j+1)$ is distinct from the neighbour of $j$ that is in $[\textbf{ptr}(i)]$. This holds because for each such $j$, we colour $j+1$ as in case ($B1$). Thus it follows that if the colour on vertex $i$ is forced, the colours on all $j \in \{\textbf{ptr}(i)+1, \dots, i\}$ are forced as well.

    In case ($B2$a), $c(i)$ is forced because we colour $i+1$ with a colour which is not yet in the neighbourhood of $i$ and include $i+1$ in $S(i+1)$. (Vertex $i+1$ is thus trivially forced in this case.) In ($B2$a), we include $i$ itself in $S$; in this case vertex $i+1$ is forced because $c(p(i+1))$ is in $[\textbf{ptr}(i)]$ and is distinct from $c(i).$ Case ($B2$c) is effectively equivalent to ($B2$a). Clearly ($B2$d) holds, since both $i$ and $i+1$ are included in $S$ in this case.

    In all cases, we have that $\mathcal{P}_{i+1}$ holds, and thus we conclude by induction that we have $\mathcal{P}_{i_{1}}$. \end{proof}

We also obtain an upper bound on the size of $S(i_{1})$. For a run of \textbf{Sudoku} from $i_{0}$ to $i_{1}$, we define $\mathcal{B}_{C} \subseteq \{i_{0}, \dots, i_{1}\}$ to be the set of \textit{conventionally bad} vertices: these are vertices $j$ which, when processed by \textbf{Sudoku}, fall into cases ($A2$a), ($A2$b), ($B1$a), or ($B1$b). The set $\mathcal{B}_{C}$ includes all $j$ with forward edges and all $j$ with backward edges which fail to extend a run because $p(j)$ has a ``forbidden" colour, i.e., $p(j)$ has the unique colour we would like to use on $j$.

Let $\mathcal{B}_{U}^{(c)}$ and $\mathcal{B}_{U}^{(d)}$ consist of the the vertices $j$ covered by ($B2$c) and ($B2$d), respectively. These vertices are \textit{unconventionally bad}, and correspond to the situation in which we are in a run of type $B$ and $\textbf{ptr}(j-1) + 1 \leq p(j) \leq j-1.$

\begin{lemma}\label{lem:size_bound}
    We have 
    $$
        |S(i_{1})| \leq \frac{1}{2}|\mathcal{B}_{C}| + |\mathcal{B}_{U}^{(c)}| + 2|\mathcal{B}_{U}^{(d)}| + |S_{0}|.
    $$
\end{lemma}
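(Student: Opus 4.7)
The plan is to examine the update rule of \textup{\textbf{Sudoku}} for $S$ case by case, and then apply a short amortization argument that exploits the alternating structure of runs of type $A$ and type $B$.

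First, I would observe that $S$ is enlarged only when processing a vertex in case $(B2)$: cases $(B2a)$, $(B2b)$, $(B2c)$ each add exactly one vertex to $S$, while $(B2d)$ adds two. A quick check shows the additions are always genuinely new: inside $(B2)$ we have $\textbf{ptr}(i)<i$, which precludes $i$ from having been placed into $S$ at time $i$ itself (any rule that adds $i$ at time $i$ also sets $\textbf{ptr}(i)=i$); moreover $S_0\subseteq[i_0]$ and the algorithm makes no addition at time $i_0{+}1$ since the run at time $i_0$ is of type $A$, so these additions do not overlap $S_0$ either. Writing $\mathcal{B}_C^B\subseteq\mathcal{B}_C$ for the set of vertices processed in $(B2a)$ or $(B2b)$, this yields
\[
|S(i_1)| - |S_0| \;=\; |\mathcal{B}_C^B| + |\mathcal{B}_U^{(c)}| + 2\,|\mathcal{B}_U^{(d)}|.
\]

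Next, I would bound $|\mathcal{B}_C^B|$ by counting runs. Let $\mathcal{B}_C^A := \mathcal{B}_C \setminus \mathcal{B}_C^B$ denote the set of vertices processed in $(A2a)$ or $(A2b)$. The key observation is that runs of types $A$ and $B$ strictly alternate: an $(A2)$ vertex is the unique way to open a run of type $B$---it is the only rule that lowers $\textbf{ptr}(i{+}1)$ strictly below $i{+}1$---and a $(B2)$ vertex is the unique way to terminate one, since it is the only $B$-rule that resets $\textbf{ptr}$. Hence, apart from at most one boundary discrepancy at $i_1$ (which can only make $|\mathcal{B}_C^A|$ larger), the number of $(A2)$ vertices equals the total number of $(B2)$ vertices, giving
\[
|\mathcal{B}_C^A| \;\geq\; |\mathcal{B}_C^B| + |\mathcal{B}_U^{(c)}| + |\mathcal{B}_U^{(d)}|.
\]
Combining with $|\mathcal{B}_C|=|\mathcal{B}_C^A|+|\mathcal{B}_C^B|$ yields $|\mathcal{B}_C^B|\leq \tfrac12 \bigl(|\mathcal{B}_C|-|\mathcal{B}_U^{(c)}|-|\mathcal{B}_U^{(d)}|\bigr)\leq \tfrac12|\mathcal{B}_C|$, and substituting into the expression for $|S(i_1)|-|S_0|$ produces the claimed bound.

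The main (and essentially only) content is the alternation observation, which requires only a careful inspection of the update rules in Section~\ref{sec:sudoku_alg} to verify that $(A2)$ and $(B2)$ are indeed the sole rules that move the pointer across the line $\textbf{ptr}=\text{index}$. The boundary case at $i_1$ only strengthens the inequality we need, so I do not anticipate any further obstacle.
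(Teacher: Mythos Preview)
Your proof is correct and follows essentially the same approach as the paper: both arguments observe that additions to $S$ occur only in case $(B2)$, and both exploit the alternation of $A$- and $B$-runs (starting in type $A$) to conclude that the $(B2a)/(B2b)$ vertices account for at most half of $\mathcal{B}_C$. Your version is slightly more careful---you explicitly verify that the additions are genuinely new and you derive the sharper intermediate inequality $|\mathcal{B}_C^A|\geq |\mathcal{B}_C^B|+|\mathcal{B}_U^{(c)}|+|\mathcal{B}_U^{(d)}|$---but the core idea is identical to the paper's.
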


\begin{proof}
    Note that $|S(i)|$ only increases when we encounter a bad vertex in the algorithm. We will calculate the contribution from each kind of bad vertex. 
    
    Clearly we add one vertex to $S$ for each vertex in $\mathcal{B}_{U}^{(c)}$ and two vertices to $S$ for each vertex in $\mathcal{B}_{U}^{(d)}.$ We also add one vertex to $S$ for every vertex in $\mathcal{B}_{C}$ which ends a run of type $B$; these vertices make up at most half of $\mathcal{B}_{C}$ because runs alternate between types $A$ and $B$, and we begin the algorithm in a run of type $A$. 
\end{proof}

We could be more careful in the proof of Lemma~\ref{lem:size_bound} and derive an exact expression for $|S(i_{1})|$, but the upper bound we give will suffice for our purposes. 

\subsection{Vertex types}

We can think of the \textbf{Sudoku} as making transitions among a finite set of vertex types, where a type encodes some characteristics of a particular vertex when it is processed by the algorithm, e.g., its colour, the colour of its neighbours, whether it has a forward or backward edge, etc. 

Suppose a vertex $i$ has the following properties: $i$ is assigned colour $k$, $i$ has a backward edge, and $\textbf{ptr}(i) = i$ in the execution of Sudoku. Then we say $i$ is of type $A_{b}^{(k)}$, where $A$ refers to the fact that a vertex with these properties is part of a run of type $A$. Similarly, $i$ is of type $A_{f}^{(k)}$ if $i$ has colour $k$, $i$ has a forward edge, and $\textbf{ptr}(i) = i$. We let 
$$
    \mathcal{A} = \{A_{b}^{(1)}, A_{b}^{(2)}, A_{b}^{(3)}, A_{f}^{(1)}, A_{f}^{(2)}, A_{f}^{(3)} \}
$$
be the formal class consisting of these types.  

Now, suppose that vertex $i$ has colour $k$, $i$ has a backward edge to some vertex of colour $\ell$, and $\textbf{ptr}(i) < i$ in the execution of \textbf{Sudoku}. Then we say $i$ is of type $B_{b}^{(\ell k)}$. If instead $i$ has a forward edge and $i-1$ is of colour $\ell$, $i$ is of type $B_{f}^{(\ell k)}$. We define
$$
    \mathcal{B} = \{B_{b}^{(12)}, B_{b}^{(21)}, B_{b}^{(13)}, B_{b}^{(31)}, B_{b}^{(23)}, B_{b}^{(32)}, B_{f}^{(12)}, B_{f}^{(21)}, B_{f}^{(13)}, B_{f}^{(31)}, B_{f}^{(23)}, B_{f}^{(32)}\}
$$
and let $\mathcal{V} = \mathcal{A} \cup \mathcal{B}$ be the set of all vertex types. See Figure~\ref{fig:types} for an illustration of the types.

\begin{figure}[h]
	\centering
	\includegraphics[scale=0.375]{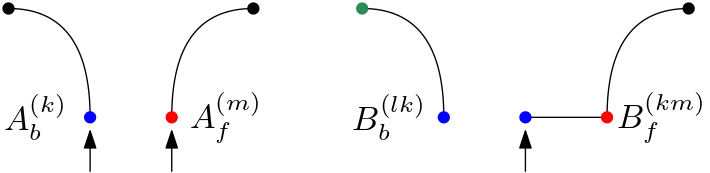}
	\caption{The types. Here we use blue for colour $k$, red for $m$, and green for $\ell$. The pointer is indicated by an upward arrow. Note that if vertex $i$ is of type $B_{f}^{(km)}$, we must have $\textbf{ptr}(i) = i-1$ as indicated in the right-most figure.}
	\label{fig:types}
\end{figure}

Transitions between the types are governed by the algorithm \textbf{Sudoku}. For instance, if $i$ has type $A_{b}^{(1)}$ and vertex $i+1$ has a backward edge to a vertex of colour $2$, then $i+1$ has type $A_{b}^{(3)}$. If instead vertex $i+1$ has a forward edge, then its type will be $B_{f}^{(12)}$ with probability $\frac{1}{2}$ and $B_{f}^{(13)}$ with probability $\frac{1}{2}$, as in this situation we colour $i+1$ randomly with some colour in $\{2,3\}$. In Section~\ref{sec:markov_chain}, we will design a Markov chain on the set of types $\mathcal{V}$ that mimics these transition dynamics. 

\subsection{The random matching process}\label{sec:random_matching_process}

In order to analyze \textbf{Sudoku} on the random graph $G_{n}$, we employ a common technique: we reveal the edges of the graph and ``run the algorithm" simultaneously. (More formally, this is known as the \textit{principle of deferred decision}.) To start, we reveal the random Hamilton cycle $H_{n}$; since we do not care about the vertex labels, we may always relabel $H_{n}$ so that it has the convenient cyclic order $(12\cdots n)$. (We will assume that this is the case for the remainder of the paper.)

We will sample the matching $M_{n}$ via a \textit{random matching process} in which we (partially) reveal the partner $p(i)$ of each vertex $i \in [n]$ sequentially, starting from $i=1$ and proceeding in the order of the cycle. Formally, for some $i \geq 2$ suppose that we have run the process for vertices $1,2,\dots,i-1$; we partially reveal $p(i)$ via the following experiment:
\begin{itemize}
    \item Ask ``is $p(i) < i$?"
    \begin{enumerate}[label = (\alph*)]
        \item If the answer is ``no," then leave vertex $i$ \textit{unsaturated} and proceed to the next vertex.        
        \item If the answer is ``yes," then fully reveal the partner $p(i)$ by sampling it uniformly at random from the unsaturated vertices in $[i-1]$.
    \end{enumerate}
\end{itemize}

We stress that $p(i)$ is only fully revealed at time $i$ if we have $p(i) < i$. For $i \in [n]$, we let $\mathcal{F}_{i}$ be the $\sigma$-algebra generated by the history of the matching process through time $i$, and let $\mathcal{F}_{0}$ be the trivial $\sigma$-algebra corresponding to $M_{n}$. We define
$$
    X(i) = \left\lvert\{j \in [i]\,:\,p(j) > i \}\right\rvert
$$
for $i \in [n]$ and let $X(0) = 0$. In other words, $X(i)$ is the number of unsaturated vertices at time $i$ of the process. Note that $X(i)$ is $\mathcal{F}_{i}$-measurable. We make note of the following useful expression, which we use throughout our analysis:

\begin{lemma}\label{lem:backward_prob}
    For any $i \in [n]$ and $1\leq j < i$, we have
    $$
        \prob\Big(p(i) = j \,|\, \mathcal{F}_{i-1} \Big) = \frac{\unit\{j \textup{ is unsaturated}\}}{n-(i-1)}.
    $$
    Consequently,
    $$
        \prob \Big( p(i) < i \,|\,\mathcal{F}_{i-1} \Big) = \frac{X(i-1)}{n-(i-1)}.
    $$
\end{lemma}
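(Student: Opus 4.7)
The plan is to exploit the uniformity of $M_n$ as a random perfect matching on $[n]$ and to directly describe the conditional distribution of $p(i)$ given $\mathcal{F}_{i-1}$ by a counting argument over matchings consistent with the observed history.

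First, I would unpack exactly what the history $\mathcal{F}_{i-1}$ records: for each $j \in [i-1]$ the process has asked whether $p(j) < j$, and on positive answers it has revealed the value of $p(j)$. Equivalently, $\mathcal{F}_{i-1}$ determines the set $U \subseteq [i-1]$ of vertices unsaturated at time $i-1$ (with $|U| = X(i-1)$) together with the matching structure among the saturated vertices of $[i-1]$. A small but crucial structural observation is that no two vertices $u, u' \in U$ can be matched to each other in $M_n$: if $u < u' \leq i-1$ and $p(u') = u$, then at step $u'$ the process would have answered ``yes'' to ``is $p(u') < u'$?'' and revealed $p(u') = u$, so $u$ would have been saturated by time $i-1$, a contradiction.

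Setting $V = \{i, i+1, \dots, n\}$, this observation implies that every vertex of $U$ is matched in $M_n$ to some vertex of $V$. By the uniformity of $M_n$, the conditional law of the remaining (unrevealed) portion of $M_n$ given $\mathcal{F}_{i-1}$ is therefore uniform over configurations consisting of an injection $\sigma : U \hookrightarrow V$ together with a perfect matching on $V \setminus \sigma(U)$. The number of such configurations is
$$
    \frac{|V|!}{(|V|-|U|)!}\,(|V|-|U|-1)!!,
$$
and fixing $j \in U$ and demanding $\sigma(j) = i$ reduces this count by one factor of $|V|$. Hence for $j \in U$ we get $\prob(p(i) = j \mid \mathcal{F}_{i-1}) = 1/|V| = 1/(n-(i-1))$, while for $j \in [i-1] \setminus U$ vertex $j$ is already saturated and $p(i) = j$ is impossible, which together give the first formula with its indicator on $\{j \text{ unsaturated}\}$.

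Finally, summing the first identity over $j \in [i-1]$ collapses the right-hand side to $|U|/(n-(i-1)) = X(i-1)/(n-(i-1))$, since exactly $X(i-1)$ of the indicators are nonzero. The main conceptual hurdle is the structural fact that unsaturated vertices in $[i-1]$ cannot be matched to each other under $\mathcal{F}_{i-1}$; once that is in place, the counting is essentially a one-line calculation and no further estimates are needed.
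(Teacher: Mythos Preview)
Your proof is correct and follows essentially the same counting argument as the paper: both compute the conditional probability by enumerating completions of the matching consistent with $\mathcal{F}_{i-1}$ and taking a ratio. The paper phrases the count using falling factorials and the function $M(m)$ for the number of perfect matchings on $m$ points, while you phrase it via an injection $\sigma: U \hookrightarrow V$ followed by a perfect matching on $V \setminus \sigma(U)$; these are the same count, and your explicit justification that no two unsaturated vertices in $[i-1]$ can be matched to one another is a point the paper leaves implicit.
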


\begin{proof}
    Suppose vertex $j \in [i-1]$ is unsaturated at time $i-1$. There are 
    \begin{equation}\label{eq:matching_exp1}
        (n-i)_{X(i-1)-1} \cdot M(n - i - (X(i-1)-1))
    \end{equation}
    ways to complete the matching which are consistent with $\mathcal{F}_{i-1}$ and the event $\{p(i) = j\}$, where $M(m)$ is the number of perfect matchings on $m$ elements and $(a)_b$ represents the falling factorial. (Explanation: we first match the remaining $X(i-1)-1$ unsaturated vertices in $[i-1]$ with partners in $[n] \setminus [i]$, then we match the remaining $n-i -(X(i-1)-1)$ vertices.) 
    
    On the other hand, there are 
    \begin{equation}\label{eq:matching_exp2}
        (n-(i-1))_{X(i-1)} \cdot M(n - (i-1) - X(i-1))
    \end{equation}
     valid ways in total to complete the matching consistent with $\mathcal{F}_{i-1}$. Dividing (\ref{eq:matching_exp1}) by (\ref{eq:matching_exp2}) gives the first expression in the statement. Summing this expression over $j \in [i-1]$ gives the second.
\end{proof}

Thus, the procedure of partially revealing the partner of vertex $i$ can be run as follows: given $\mathcal{F}_{i-1}$, flip a coin with probability $\frac{X(i-1)}{n-(i-1)}$ of being heads; on heads, choose a partner for $i$ uniformly at random from the $X(i-1)$ unsaturated vertices in $[i-1]$; on tails, leave $i$ unsaturated. Repeating this experiment independently for all $i \in [n]$ reveals the complete matching $M_{n}$ at time $n$ with the correct distribution. Since the execution of \textbf{Sudoku} at vertex $i$ depends only on knowing if $p(i) > i$ or on the colour assigned to $p(i)$ and $i-1$ if $p(i) < i$, we can couple a run of the algorithm with the matching process, which will be essential to our analysis. 

\subsection{Heuristic analysis and introduction to the differential equations method}\label{sec:heuristic}

Here we outline how the different pieces of our algorithm fit together and give some heuristic justification for Theorem~\ref{thm:main_theorem}.

We treat the first $i_{0} = \frac{n}{\log\log n}$ steps of the matching process as a \textit{burn-in} phase, during which we run a different algorithm that properly $3$-colours vertices $1$ through $i_{0}$ in a strongly balanced way. We include all of the $i_{0}$ initial vertices in the Sudoku set $S$, noting that they contribute only $o(n)$ to the total size. We then run \textbf{Sudoku}, started at vertex $i_{0}$ and with $S_{0} = [i_{0}]$, until $i_{1} = n - \frac{n}{\log\log n}$. The final $n-i_{1}$ vertices are processed in a \textit{completion phase}; per the discussion in Subsection~\ref{sec:greedy_alg}, we show that a.a.s.\ there is some sub-interval of vertices in $\{i_1 + 1, \ldots, n-2\}$ that induces an even cycle in $G_{n}$, which ensures that colouring can be completed. All vertices in the completion phase are included in $S$, which again adds $o(n)$ to the total size. The burn-in and completion phases are treated in Section~\ref{sec:burn_in}.

The main contribution to $|S|$ comes from vertices in $\{i_{0}, \dots, i_{1}\}$, where we run \textbf{Sudoku}. Recall the definitions of the sets $\mathcal{B}_{C}$, $\mathcal{B}_{U}^{(c)}$, and $\mathcal{B}_{U}^{(d)}$ from Subsection~\ref{sec:correctness}. Lemma~\ref{lem:size_bound} gives an upper bound on the size of $S(i_{1})$, the Sudoku set produced by \textbf{Sudoku} at time $i_{1}$, in terms of the sizes of these sets and $S_{0}$. For the purposes of the informal analysis, we will ignore $\mathcal{B}_{U}^{(c)}$ and $\mathcal{B}_{U}^{(d)}$, as very few vertices in $G_{n}$ will be included in these sets a.a.s. Since the contributions from the burn-in and completion phases will also be negligible, using Lemma~\ref{lem:size_bound} we have $|S| \leq (1+o(1))\frac{1}{2}|\mathcal{B}_{C}|.$

Now, vertex $i$ is in $\mathcal{B}_{C}$ if either i) it has a forward edge, or ii) it has a backward edge to a particular forbidden colour, that colour being determined by the type of vertex $i-1$. Importantly, these conditions do not (directly) reference the type of run that $i$ is a part of. Recall that $X(i)$ is the number of unsaturated vertices at time $i$ of the matching process. We additionally let $X_{k}(i)$ be the number of unsaturated vertices of colour $k$ at time $i$, so that $X(i) = X_{1}(i) + X_{2}(i) + X_{3}(i)$. Suppose that at time $i$ we have $X_{k}(i) \approx \frac{X(i)}{3}$ for each $k \in \{1,2,3\}$. Then we claim that the probability that vertex $i+1$ is in $\mathcal{B}_{C}$, conditioned on $\mathcal{F}_{i}$, is approximately
\begin{equation}\label{eq:bad_probability}
	\frac{X(i)}{3(n-i)} + 1-\frac{X(i)}{n-i} = 1-\frac{2X(i)}{3(n-i)},
\end{equation}
regardless of whether the algorithm is in a run of type $A$ or $B$, and regardless of the type of vertex~$i$. Indeed, by the balance assumption the number of unsaturated vertices in the forbidden colour class for vertex $i+1$ at time $i$ is approximately $\frac{X(i)}{3}$, and thus by Lemma~\ref{lem:backward_prob} the probability that $i+1$ is matched with one of them, conditioned on $\mathcal{F}_{i}$, is roughly $\frac{X(i)}{3(n-i)}$. Also by Lemma~\ref{lem:backward_prob}, the probability that $p(i+1) > i+1$ conditioned on $\mathcal{F}_{i}$ is exactly $1 - \frac{X(i)}{n-i}$. Together these give the left-hand side of (\ref{eq:bad_probability}).

Suppose that the balance condition $X_{k}(i) \approx \frac{X(i)}{3}$ holds for the duration of the algorithm. Also, suppose that there is some continuous function $x(t)$ on $[0,1]$ such that $X(i)$ stays tightly concentrated around $nx(i/n)$ for the duration of the process. (The differential equations method can be used to establish results of precisely this type.) Since 
$$
	\E[X(i+1) - X(i)\,|\,\mathcal{F}_{i}] = \left(1 - \frac{X(i)}{n-i} \right) - \frac{X(i)}{n-i} = 1 - \frac{2X(i)}{n-i}
$$
and $\frac{X(1)}{n-1} = \frac{1}{n-1}$ and $\frac{X(n)}{n} = 0$, it is natural to predict that the function $x(t)$ solves the boundary value problem
\begin{equation}\label{eq:de}
	x'(t) = 1 - \frac{2x(t)}{1-t},\quad x(0) = 0 \text{ and }x(1) =0
\end{equation}
which is solved by $x(t) = t(1-t)$. We remark here that we do not need the differential equations method to understand the trajectory of $X(i)$---this can be achieved, for instance, by applying concentration inequalities directly to the sums $\sum_{j=1}^{i}\unit\{p(j) > j\}$. We use differential equations only to understand the trajectories of the $X_{k}(i)$'s, which are more complicated. In doing so, we get $X(i)$ ``for free," since $X(i) = X_{1}(i) + X_{2}(i) + X_{3}(i).$

Using (\ref{eq:bad_probability}), we can approximately bound $|S|$ as follows:
\begin{equation}\label{eq:approximation}
	|S| \lesssim \frac{1}{2}|\mathcal{B}_{C} \approx \frac{n}{2}\int_{0}^{1} \left( 1-\frac{2x(t)}{3(1-t)} \right) \,dt = \frac{n}{2} \int_{0}^{1} \left( 1 - \frac{2t}{3} \right) \,dt = \frac{n}{3}.
\end{equation}
The main challenge in making the approximation (\ref{eq:approximation}) rigorous comes in establishing that the random variables $X_{k}(i)$ remain close to equal for the duration of the process. Applying the differential equations method directly to the $X_{k}(i)$'s is difficult, since $\E[X_{k}(i+1) - X_{k}(i)\,|\,\mathcal{F}_{i}]$, the expected one-step change for a particular $k \in \{1,2,3\}$, depends explicitly on the type of vertex $i$. 

To overcome this, we will analyze changes in the variables $X_{k}(i)$ over segments of length $\omega$ for $\omega=\omega(n)$ going to infinity slowly. The reasoning behind this is that over longer time intervals the influence of the type of any single vertex on the $X_{k}(i)$'s is negligible. On these segments, the trajectory of Sudoku can be approximated by a Markov chain which makes transitions between the vertex types in $\mathcal{V}$. One complication that arises in the approximation is the need for a sufficiently large stock of unsaturated vertices in order for the chain to mix quickly. This is the reason for the burn-in phase: it ensures that we accumulate many unsaturated vertices at the beginning of the process, and that the colours classes are balanced with respect to these vertices.

Formally, given a length $i_{0} =\frac{n}{\log\log n}$ for the burn-in phase and a length $\omega$ for the segments, we will define
\begin{eqnarray*}
	\wt{X}_{k}(i) &=& \frac{1}{\omega}X_{k}(i_{0} + i\omega) \quad \text{ for }k \in \{1,2,3\}\\
	\wt{X}(i) &=& \frac{1}{\omega}X(i_{0} + i\omega) = \wt{X}_{1}(i) + \wt{X}_{2}(i) + \wt{X}_{3}(i)\\
	\wt{\mathcal{F}}_{i} &=& \mathcal{F}_{i_{0}+i\omega}.
\end{eqnarray*}
Letting $N = \frac{n}{\omega}$ and $i_{1} = n-\frac{n}{\log \log n}$, we will show that for each $k \in \{1,2,3\}$ and $0 \leq i \leq \frac{i_{1} - i_{0}}{\omega} = N\left(1 - \frac{2}{\log \log n} \right)$,
\begin{equation}
	\E[\wt{X}_{k}(i+1) - \wt{X}_{k}(i)\,|\,\wt{\mathcal{F}}_{i}] = (1 + o(1))\frac{1}{3}\left(1 - \frac{\wt{X}(i)}{N-i} \right)
\end{equation}
which implies that $\wt{X}_{k}(i)$ is well-approximated by $\frac{N}{3}x\left(\frac{i_{0}}{n} + \frac{i}{N} \right)$ for all $k$. In particular, we expect the $\wt{X}_{k}(i)$'s (and so also the $X_k(i)$'s) to remain balanced for almost the entire process.

\section{The burn-in and completion phases}\label{sec:burn_in}

Here we establish some features of the burn-in and completion phases of our colouring algorithm. For the burn-in phase, we will run the matching process up to time $i_{0} = \frac{n}{\log \log n}$ and properly $3$-colour the corresponding vertices so that for any two colours $k$ and $\ell$, the number of unsaturated vertices of colour $k$ differs from the number of unsaturated vertices of colour $\ell$ by at most $1$. For the completion phase, we will show that some sub-interval of vertices in $\left\{i_{1} + 1, \dots, n \right\}$ induces an even cycle in $G_{n}$ a.a.s.

\subsection{The burn-in phase}

Our goal is to colour the vertices in $\{1,2,\dots,i_{0}\}$ in such a way that the colour classes remain as balanced as possible with respect to unsaturated vertices. To achieve this, we use a variant of the greedy colouring algorithm which we call \textbf{BalancedGreedy}.
 
In \textbf{BalancedGreedy} we divide the interval $\{1,2,\dots,i_{0}\}$ into $\frac{i_{0}}{7}$ batches of $7$ consecutive vertices apiece, and possibly one extra batch at the end which can be shorter. We say that vertex $i \in [i_{0}]$ has a \emph{long forward edge} if $p(i)>i_0$, and call a batch \textit{good} if it has $7$ vertices with long forward edges. All other batches are bad. (This means the extra batch at the end is always bad if it exists.)

We begin by partially revealing the edge of each vertex in $[i_{0}]$ and greedily colouring the bad batches in  order. Next we colour the good batches. Say that there are $L$ good batches in total, and for $j = 0,1,\dots,L$ we let $X_{k}^{(j)}(i_{0})$ be the number of unsaturated vertices in $[i_{0}]$ of colour $k$ just after the $j$th good batch is coloured, or just after the last bad batch is coloured in the case $j = 0$. Let $D(j) = \max_{k}X_{k}^{(j)}(i_{0}) - \min_{k}X_{k}^{(j)}(i_{0}).$ Observe that $D(L) = \max_{k}X_{k}(i_{0}) - \min_{k}X_{k}(i_{0})$, since after colouring the $L$th good batch all vertices in $[i_{0}]$ have been coloured.

Now, it is not hard to show that in any good batch, vertices $2,4$ and $6$ can be coloured using the same colour, and vertices $1,3,5$ and $7$ can be coloured using each of the remaining two colours twice. Further, it is also always possible to use an arbitrary colour only once in a good batch and use the other two colours three times each. These properties ensure that we always have $D(j) \leq D(j-1)-1$ if $D(j-1) > 1$, or that $D(j) \in \{0,1\}$ if $D(j-1) \in \{0,1\}$. If the number of good batches $L$ is large enough, we will be able to conclude that $D(L) = \max_{k}X_{k}(i_{0}) - \min_{k}X_{k}(i_{0}) \in \{0,1\}$, and indeed we show that this is the case in the next lemma.

\begin{lemma}\label{lem:burn_in2}
	A.a.s., \textup{\textbf{BalancedGreedy}} colours the vertices in $[i_{0}]$ so that 
    $$
        |X_{k}(i_{0}) - X_{\ell}(i_{0})| \leq 1
    $$ 
    for all $k,\ell \in \{1,2,3\}$.
\end{lemma}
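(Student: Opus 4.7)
The plan is to reduce the lemma to showing that the number of \emph{bad} batches $B$ produced by \textbf{BalancedGreedy} is small a.a.s. The reduction property for $D(j)$ stated just before the lemma forces $D(L) \leq 1$ whenever the number of good batches $L$ satisfies $L \geq D(0) - 1$, so it suffices to verify such an inequality with high probability. Both $L$ and $D(0)$ are controlled by $B$: one has $L = \lceil i_0/7 \rceil - B$ deterministically, while $D(0) \leq 6B$ because each bad batch contains at most $6$ unsaturated vertices (any batch with $7$ unsaturated vertices would be good by definition), so the total number of unsaturated vertices coloured during the bad-batch phase is at most $6B$. The whole argument thus reduces to showing $B \leq i_0/49$ a.a.s., which in turn gives $L \geq 6i_0/49 - 1 \geq D(0) - 1$.

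To bound $B$, I would observe that a batch is bad precisely when it contains some vertex $i$ with $p(i) \leq i_0$, i.e., the matching edge at $i$ lies inside $[i_0]$. Letting $E_0$ denote the number of matching edges of $M_n$ contained in $[i_0]$, this gives $B \leq 2E_0 + 1$, where the $+1$ covers the always-bad partial batch at the end (should it exist). Since each pair $\{i,j\} \subseteq [i_0]$ is an edge of the uniform random perfect matching $M_n$ with probability $\tfrac{1}{n-1}$,
\begin{equation*}
    \E[E_0] \leq \frac{i_0^2}{2(n-1)} = O\!\left( \frac{i_0}{\log\log n}\right),
\end{equation*}
using $i_0 = n/\log\log n$. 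A direct application of Markov's inequality then yields $\prob(B \geq i_0/49) = O(1/\log\log n) = o(1)$.

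On the a.a.s.\ event $\{B \leq i_0/49\}$, applying the one-step reduction property $L$ times to the sequence $D(0), D(1), \dots, D(L)$ gives $D(L) \leq 1$. Since $D(L) = \max_k X_k(i_0) - \min_k X_k(i_0)$ (once the final good batch is coloured, all of $[i_0]$ is coloured), this is exactly the inequality claimed in the statement.

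Honestly, no single step is a major obstacle once the flexibility properties of good-batch colourings and the one-step decrease of $D(j)$ are in hand---the probabilistic content amounts to the observation that very few matching edges fall inside a sublinear initial window. The main point of care is matching the constants so that $6B \leq L$ on the good event: batch size $7$ is convenient precisely because it leaves enough room between $L \approx i_0/7$ and $D(0) \leq 6B$ for plain Markov to suffice. If one wanted to use a smaller batch size or a significantly shorter burn-in phase, sharper concentration (Chernoff with negative correlation, or Azuma on the edge-revelation martingale) would likely be required to pin down $B$ tightly enough.
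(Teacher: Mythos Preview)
Your proof is correct and follows essentially the same approach as the paper: bound $D(0)$ by a constant times the number of bad batches, bound the number of bad batches by (twice) the number of matching edges internal to $[i_0]$, and use a first-moment/Markov argument to show that this count is $o(i_0)$ a.a.s. Your constant $D(0)\le 6B$ is slightly sharper than the paper's trivial $D(0)\le 7\cdot\#\text{bad batches}$, but the structure of the argument is identical.
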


\begin{proof}
    Each time we colour a bad batch, the colour discrepancy for unsaturated vertices trivially increases by at most $7$. Thus, if $L$ is the number of good batches,      
    $$
        D(0) \leq 7\cdot \#\text{bad batches} \leq 7\left((1+o(1))\frac{i_{0}}{7} - L\right) = (1+o(1))i_{0}  - 7L.
    $$
    Now, since each time we colour a good batch $D(\cdot)$ either decreases by at least $1$ or remains in $\{0,1\}$, we have
    $$
        D(L) \leq \max\{D(0) - L, 1\} \leq \max\left\{(1+o(1))i_{0}  - 8L, 1\right\}.
    $$
    In particular, if $L \geq \frac{i_{0}}{7.5}$, then $D(L) \leq 1$. 
    
    To see that this holds a.a.s., first observe that there are $o(i_{0})$ edges which join pairs of vertices in $[i_{0}]$ a.a.s.\ This follows from an easy first moment argument: we expect $O\left( i_0 \cdot \frac {i_0}{n} \right)= O\left(\frac{i_{0}}{\log\log n}\right)$ such edges, and thus by Markov's inequality there are $o(i_0)$ of them a.a.s. Since each such edge makes at most $2$ batches go bad, the number of good batches is at least $(1+o(1))\frac{i_{0}}{7} - o(i_0) \ge \frac {i_0}{7.5}$ a.a.s.
\end{proof}

We finish this section by showing that the number of unsaturated vertices at time $i_{0}$ is well-concentrated around what we expect. Recall from Section~\ref{sec:heuristic} that we define the function $x(t)$ on $(0,1)$ as 
$$
    x(t) = t(1-t).
$$

\begin{lemma}\label{lem:burn_in3}
	Let $\lambda =\lambda(n) \to \infty$ as $n \to \infty$ arbitrarily slowly. Then, a.a.s.
		$$
			\left\vert X(i_{0}) - nx(i_{0}/n)\right\vert = O(\sqrt{\lambda i_{0}}).
		$$
\end{lemma}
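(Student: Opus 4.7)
The plan is to compute the first two moments of $X(i_0)$ directly from the uniform distribution of the random perfect matching $M_n$ (which the matching process of Subsection~\ref{sec:random_matching_process} samples), and then conclude via Chebyshev's inequality. Writing $X(i_0) = \sum_{u=1}^{i_0} Y_u$ with $Y_u = \unit{p(u) > i_0}$, we recognize $X(i_0)$ as the number of edges of $M_n$ that cross the cut $([i_0], [n]\setminus [i_0])$. By the exchangeability of $M_n$, $\E[Y_u] = (n-i_0)/(n-1)$, and so
$$
\E[X(i_0)] = \frac{i_0(n-i_0)}{n-1} = n\cdot x(i_0/n) + \frac{i_0(n-i_0)}{n(n-1)} = n\cdot x(i_0/n) + O(1),
$$
which is within $O(1) = o(\sqrt{\lambda i_0})$ of the target value.

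Next, bound $\Var(X(i_0))$ by a pairwise covariance computation. For distinct $u, u' \in [i_0]$, the uniform distribution of $M_n$ yields
$$
\E[Y_u Y_{u'}] = \prob\bigl(p(u),\,p(u') \in [n]\setminus [i_0]\bigr) = \frac{(n-i_0)(n-i_0-1)}{(n-1)(n-3)},
$$
and a short algebraic simplification (expanding the difference with $\E[Y_u]\E[Y_{u'}] = (n-i_0)^2/(n-1)^2$) shows that $\text{Cov}(Y_u, Y_{u'}) = O(1/n)$ uniformly over such pairs. Combined with the trivial $\Var(Y_u) \leq 1$,
$$
\Var(X(i_0)) \leq i_0 + i_0(i_0-1)\cdot O(1/n) = O\bigl(i_0 + i_0^2/n\bigr) = O(i_0),
$$
where the final equality uses $i_0 = n/\log\log n$, so $i_0^2/n = o(i_0)$.

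Chebyshev's inequality then gives
$$
\prob\bigl(|X(i_0) - \E[X(i_0)]| \geq \sqrt{\lambda i_0}\bigr) \leq \frac{\Var(X(i_0))}{\lambda i_0} = O(1/\lambda) \to 0,
$$
so a.a.s.\ $|X(i_0) - \E[X(i_0)]| \leq \sqrt{\lambda i_0}$, and combining with the expectation estimate delivers the claim. There is no real obstacle in this plan; the only minor subtlety is the covariance calculation, but this follows directly from the uniform distribution of $M_n$ and requires no finer tools (martingales, negative correlation, etc.).
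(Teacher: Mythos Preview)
Your proof is correct, and it takes a genuinely different route from the paper's. The paper avoids the decomposition $Y_u = \unit\{p(u) > i_0\}$ precisely because these indicators are \emph{positively} correlated (as your own computation confirms: $\text{Cov}(Y_u,Y_{u'}) = \frac{(n-i_0)(n-2i_0+1)}{(n-1)^2(n-3)} > 0$), which blocks the Chernoff-type bounds of Proposition~\ref{prop:chernoff}. Instead the paper writes $X(i_0) = 2\sum_{j\le i_0}\unit\{p(j)>j\} - i_0$ and verifies that the indicators $\unit\{p(j)>j\}$ (and their complements) are negatively correlated, so Proposition~\ref{prop:chernoff} applies.

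Your argument sidesteps this issue entirely: although the $Y_u$ are positively correlated, the covariances are only $O(1/n)$, so the variance is still $O(i_0)$ and Chebyshev suffices for an a.a.s.\ statement. This is more direct and more elementary---no negative-correlation verification, no Chernoff---at the cost of giving only polynomial rather than exponential tail bounds. Since the lemma only asks for a.a.s., nothing is lost for the purposes of this paper.
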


\begin{proof}
    For $j \in [i_{0}]$, let $Y_{j} = \unit\{p(j) > j\}$ and let $Y = \sum_{j=1}^{i_{0}}Y_{j}$. We may write
    $$
        X(i_{0}) = \sum_{j=1}^{i_{0}}\unit\{p(j) > j\} - \sum_{j=1}^{i_{0}}\unit\{p(j) < j\} = 2Y - i_{0}.
    $$
    Since $\prob(p(j) > j) = \frac{n-j}{n-1}$, we get 
    $$
        \E[Y] = \sum_{j=1}^{i_{0}}\frac{n-j}{n-1} = i_{0} - \frac{1}{n-1}\sum_{j=1}^{i_{0}}(j-1) = i_{0} - \frac{(i_{0}-1)i_{0}}{2(n-1)} = i_{0}\left(1 - \frac{i_{0}}{2n} \right) + O(1)
    $$
    from which we easily derive that $\E[X(i_{0})] = nx(i_{0}/n) + O(1).$ To show the result, it will suffice to show that $Y$ is within $O(\sqrt{\lambda i_{0}})$ of $\E[Y]$ a.a.s.\ for any $\lambda \to \infty$. To do this, we will show that both collections of indicators $\{Y_{j}\}_{j=1}^{i_{0}+1}$ and $\{1-Y_{j}\}_{j=1}^{i_{0}+1}$ are negatively correlated. The desired concentration is then implied by Proposition~\ref{prop:chernoff}. (As an aside, it is perhaps a more natural idea to consider the sum $\sum_{j=1}^{i_{0}}\unit\{p(j) > i_{0}\}$ directly. However, the collection $\{\unit\{p(j) > i_{0}\}\}_{j=1}^{i_{0}}$ is actually \textit{positively} correlated, which prevents us from applying large deviations bounds to their sum.)

    Let $1 \leq j_{1} < j_{2} < \cdots < j_{k} \leq i_{0}+1$. We claim
    \begin{equation}\label{eq:negative_cor_prod}
        \prob(Y_{j_{h}}=1 \,\forall\,h = 1,2,\dots,k) = \frac{n-j_{k}}{n-1} \cdot \frac{n-j_{k-1}-2}{n-3}\cdots \frac{n-j_{1} - 2(k-1)}{n-(2(k-1)+1)}.
    \end{equation}
    Indeed, by assigning neighbours for $j_{1},\dots,j_{k}$ in decreasing order, we see that there are $n-j_{k}$ choices for the neighbour of $j_{k}$, then $n-j_{k-1}-2$ choices for the neighbour of $j_{k-1}$, etc., which are consistent with $p(j_{h}) > j_{h}$ for $h = 1,\dots,k.$ Moreover, any particular assignment of neighbours for these vertices occurs with probability $\prod_{h=0}^{k-1}\frac{1}{n-(2h-1)}$, which gives (\ref{eq:negative_cor_prod}).

    Observe that for any $h = 1,\dots,k$, the factor corresponding to $h$ in (\ref{eq:negative_cor_prod}) is
    $$
        \frac{n-j_{h}-2(k-h)}{n-(2(k-h)+1)} \leq \frac{n-j_{h}}{n-1} = \prob(Y_{j_{h}}=1).
    $$
    From the above, we immediately get that the product in (\ref{eq:negative_cor_prod}) is upper bounded by $\prod_{h=1}^{k}\prob(Y_{j_{h}}=1)$. We conclude that the collection $\{Y_{j}\}_{j=1}^{i_{0}+1}$ is negatively correlated. Similar computations give that $\{1-Y_{j}\}_{j=1}^{i_{0}+1}$ is also negatively correlated, completing the proof.
\end{proof}

\subsection{The completion phase}

 The main result of this subsection is the following lemma. Before stating it, we recall that $i_0=\frac{n}{\log\log n}$ and $i_1= n-\frac{n}{\log\log n}$.

\begin{lemma}\label{lem:completion}
	A.a.s., there is a sequence of consecutive vertices in $\{i_1 + 1, \dots, n\}$ which induce an even cycle in $G_{n}$. 
\end{lemma}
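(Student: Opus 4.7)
The plan is to use a second moment argument on the number of intervals in $\{i_1+1, \ldots, n\}$ of even length that induce an even cycle in $G_n$. For each even $L$ with $2 \leq L \leq L_{\max} := \lfloor (\log\log n)^2 \rfloor$ and each $w$ with $i_1 + 1 \leq w \leq n - L + 1$, I will let $Y_{w, L}$ be the indicator of the event that $I_{w, L} := \{w, w+1, \ldots, w+L-1\}$ induces an even cycle in $G_n$, i.e., $p(w) = w+L-1$ and no other matching edge has both endpoints in $I_{w,L}$. Setting $T := \sum_{w, L} Y_{w, L}$, it suffices to show that $T \geq 1$ a.a.s.

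For the first moment, I will use that $M_n$ is a uniform random perfect matching on $[n]$, so $\prob(p(w) = w+L-1) = 1/(n-1)$. Conditional on this, the remaining matching is uniform on $n-2$ vertices, and a union bound over the $\binom{L-2}{2}$ pairs of interior vertices gives the probability of no internal interior match at least $1 - \binom{L-2}{2}/(n-3) = 1 - o(1)$, since $L_{\max}^2/n \to 0$. Since there are $(1+o(1))(n/\log\log n)(L_{\max}/2)$ valid $(w, L)$ pairs, this will give
$$
    \E[T] = (1+o(1)) \cdot \frac{(n/\log\log n)(L_{\max}/2)}{n-1} = (1+o(1)) \cdot \frac{\log\log n}{2} \to \infty.
$$

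For the second moment, I will split $\E[T^2] = \sum_{(w,L),(w',L')} \prob(Y_{w,L} = Y_{w',L'} = 1)$ based on the relationship between $I_{w,L}$ and $I_{w',L'}$. If one interval is strictly nested inside the other, the inner's defining matching edge would be an internal match of the outer, so the pair contributes zero; pairs of intervals sharing exactly one endpoint are likewise impossible, since that shared vertex would need two partners in the matching. For disjoint pairs (four distinct endpoints), $\prob(Y_{w,L} = Y_{w',L'} = 1) = (1+o(1))/((n-1)(n-3))$, giving a total of $(1+o(1))\E[T]^2$. For \emph{crossing} pairs (overlap but neither nested), the same $O(1/n^2)$ bound holds for each pair, and for fixed $(w, L)$ there are at most $O(L_{\max}^2)$ crossing partners, yielding a total crossing contribution of $O((n/\log\log n) \cdot L_{\max}^3 / n^2) = O(L_{\max}^3/(n \log\log n)) = o(1)$, which is $o(\E[T]^2)$. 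Combining the cases, $\Var(T) = o(\E[T]^2)$, and Chebyshev's inequality will then yield $\prob(T = 0) = o(1)$.

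The main obstacle is controlling the positive correlations among the $Y_{w,L}$ for overlapping intervals. The case analysis resolves this cleanly: nested and endpoint-sharing pairs are mutually exclusive, disjoint pairs are nearly independent up to $O(1/n)$ corrections from the matching conditioning, and genuine crossings are rare enough to be absorbed into a lower-order term. The choice $L_{\max} = \lfloor (\log\log n)^2 \rfloor$ is made to simultaneously ensure $\E[T] \to \infty$ and keep the crossing contribution negligible.
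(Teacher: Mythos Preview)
Your second-moment argument is correct. The case analysis is clean: nested pairs and pairs sharing an endpoint contribute zero, crossing pairs are rare enough that their $O(1/n^2)$ contribution sums to $o(1)$, and disjoint pairs have covariance $O(1/n^3)$ each, summing to $o(1)$ as well. Together with the diagonal $\E[T] = o(\E[T]^2)$, Chebyshev finishes it. The choice $L_{\max} = \lfloor(\log\log n)^2\rfloor$ is well calibrated.

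The paper takes a different, more elementary route. It partitions $\{i_1+1,\dots,n\}$ into $\sqrt{n}/\log\log n$ consecutive blocks $I_1,\dots,I_{\sqrt{n}/\log\log n}$, each of length $\sqrt{n}$, and shows that for each $j$, conditioned on the matching revealed on earlier blocks, the probability that $I_j$ contains exactly one internal matching edge joining two vertices at odd distance is at least $(1+o(1))/(4e)$. Chaining these conditional bounds via the tower property gives failure probability at most $(1-\Omega(1))^{\sqrt{n}/\log\log n}$, which is far smaller than $o(1)$.

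The two approaches trade different things. Yours is a textbook second-moment computation over short intervals; it is systematic and would generalize easily to other regimes, but gives only a $1/\log\log n$-type failure bound and requires the overlap bookkeeping. The paper's block argument avoids any variance computation by manufacturing $\sqrt{n}/\log\log n$ nearly independent Bernoulli trials with constant success probability, yielding a stretched-exponential bound with essentially no combinatorics beyond a union bound inside each block. Either proof is perfectly adequate for the lemma as stated.
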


\begin{proof}[Proof of Lemma \ref{lem:completion}]
	For notational simplicity, we will prove instead that there is a sequence of consecutive vertices in $\{1,2,\dots,i_0\}$ which an induce an even cycle. By symmetry, the same clearly holds when $\{1,2,\dots,i_0\}$ is replaced with $\{i_1 + 1, \dots,n\}$.
	
	Divide the interval $\{1,2,\dots,i_0\}$ into $\sqrt{n}/{\log \log n}$ consecutive intervals of length $\sqrt{n}$, and label these intervals $I_{1}, I_{2}, \dots, I_{\sqrt{n}/\log \log n}$.  We will show that a.a.s., at least one interval $I_{j}$ satisfies the following:    
    \begin{enumerate}[label = (\roman*)]
        \item there is a single edge in $M_{n}$ matching a pair of vertices in $I_{j}$, and
        \item the pair of vertices matched by this edge are at odd distance from one another on the cycle.
    \end{enumerate}
    Note that (i) guarantees $I_{j}$ contains an induced cycle, and (ii) guarantees the cycle is even. 

	We will partially reveal edges on the vertices in order until we find some $I_{j}$ satisfying (i) and (ii) above, or until we have processed all vertices in $\bigcup_{j}I_{j}$. Suppose we are about to process the vertices in $I_{j}$ for some $j \geq 1$. We claim that, conditioned on the history $\mathcal{F}_{(j-1)\sqrt{n}}$ up to this point (recall $\mathcal{F}_{0} = \varnothing$ in the case $j = 1$), the probability that $I_{j}$ satisfies (i) and (ii) is at least 	
	\begin{equation}\label{eq:ij_success}
		(1+o(1)) \frac{1}{2n} \binom{\sqrt{n}}{2} \left(1 - (1+o(1))\frac{1}{\sqrt{n}} \right)^{\sqrt{n}} = (1+o(1))\frac{1}{4e}.
	\end{equation}
	
	To prove (\ref{eq:ij_success}), consider any vertex $i \in I_{j}$. Given $\mathcal{F}_{i-1}$, the probability that $p(i) \not\in \{(j-1)\sqrt{n} +1,\dots, i-1\}$ is at least 
	\begin{equation}\label{eq:conditional_bound}
		1 - \frac{i - 1 - (j-1)\sqrt{n} }{n-(i-1)} \geq 1 - (1+o(1))\frac{1}{\sqrt{n}}.
	\end{equation}
		Indeed, the first expression above holds because once the process reaches vertex $i$, in the ``worst case" all vertices in $\{(j-1)\sqrt{n} + 1, \dots, i-1\}$ are unsaturated, and $i$ is matched with any one of these vertices with probability $\frac{1}{n-(i-1)}$ by Lemma~\ref{lem:backward_prob}. For the second inequality, we use that $i \leq j \sqrt{n} $. 
	
	Now, (\ref{eq:ij_success}) easily follows using the bound (\ref{eq:conditional_bound}). Indeed, there are $(1+o(1))\frac{1}{2}\binom{ \sqrt{n}}{2}$ pairs $\{l, r\} \subset I_{j}$ with $l < r$ and $r - l$ odd. For any given such pair, the probability that $r$ and $l$ are matched by $M_{n}$ is $(1+o(1))\frac{1}{n}$; the probability that no other vertex in $I_{j}$ creates a cycle in $I_{j}$ when its edge is revealed is $\left(1 - (1+o(1))\frac{1}{\sqrt{n}} \right)^{\sqrt{n}}$. Combining all of these observations yields (\ref{eq:ij_success}).
	
	The bound (\ref{eq:ij_success}) gives
	$$
		\prob\left(I_{j} \text{ has no induced even cycle}\,|\,\mathcal{F}_{(j-1)\sqrt{n}} \right) \leq 1 - (1+o(1))\frac{1}{4e}.
	$$
	With the tower property of conditional expectation, it follows that 
	$$
		\prob\left(\bigcap_{j=1}^{\sqrt{n}/\log \log n}\{I_{j} \text{ has no induced even cycle}\}\right) \leq \left(1 -(1 + o(1))\frac{1}{4e}\right)^{\sqrt{n}/\log\log n} = o(1)
	$$
	which completes the proof.	
\end{proof}

\section{The Markov chain on types}\label{sec:markov_chain}

\subsection{Definition and basic properties of the chain}

Here we define a Markov chain $Q$ on the set of types $\mathcal{V} = \mathcal{B} \cup \mathcal{A}$ which is used in the computation of $\E[\wt{X}_{k}(i+1) - \wt{X}_{k}(i)\,|\,\wt{\mathcal{F}}_{i}]$. Throughout, we identify a Markov chain with its transition matrix, so we think of $Q$ as an $18 \times 18$ matrix with rows and columns indexed by $\mathcal{V}.$ The chain $Q$ is meant to reflect an idealized run of a segment of \textbf{Sudoku} of length $\omega$, starting from vertex $i$ and assuming that vertices in $[i]$ have already been processed. When designing $Q$, we make a couple of heuristic assumptions on the matching process which are technically false, but not by much. They are:
	\begin{enumerate}[label = (\roman*)]
		\item for any $i < j \leq i+\omega$, the probability that $j$ is joined to any particular unsaturated vertex is $\frac{1}{n-i}$;
		\item for any $i < j \leq i+\omega$, we have $p(j) \not\in \{i+1, i+2, \dots, j-1\}$.
	\end{enumerate} 
Item (i) says that we ``freeze" the matching probabilities at time $i$. (In reality, the correct probability at time $j$ is $\frac{1}{n-j}$, by Lemma~\ref{lem:backward_prob}.) Item (ii) says that we ignore edges that match two vertices $\{i+1, \dots, i+ \omega\}$. In practice, we will consider segments of length $\omega = o(\sqrt{n})$; for any such segment, the probability that we encounter one of these ``short" edges is $o(1)$, meaning that assumption (ii) is not overly restrictive.

$Q$ takes as parameters the ($\mathcal{F}_{i}$-measurable) random variables $X_{k}(i)$ for $k \in \{1,2,3\}.$ To illustrate the design principle behind $Q$, suppose that at time $j \geq i$ the algorithm is in state $A_{b}^{(1)}$. When revealing the partner of $j+1$, we get a good edge if $p(j+1) < j+1$ and $c(p(j+1)) = 2$ or $c(p(j+1)) = 3$, corresponding to transitions to states $A_{b}^{(3)}$ and $A_{b}^{(2)}$, respectively. Given $\mathcal{F}_{j}$, these events occur with probabilities $\frac{X_{2}(j)}{n-j}$ and $\frac{X_{3}(j)}{n-j}$, respectively. If the Markov chain is in state $A_{b}^{(1)}$ at time $j \geq i$, a transition to state $A_{b}^{(k)}$ for $k = 2$ or $k=3$ will occur with probability $\frac{X_{k}(i)}{n-i}$. 

Other transition probabilities can be deduced similarly. We make one further exception when considering states in $\mathcal{B}$; with these states, there is a possibility that vertex $j+1$ is joined to an unsaturated vertex which is exactly one vertex ahead of the pointer, which puts us in case ($B2$c) or ($B2$d) and causes a more complicated (though rare) colour selection and pointer update in \textbf{Sudoku}. The chain $Q$ simply ignores these transitions. 

We express the transition probabilities of $Q$ in terms of general parameters $q_{1}, q_{2}, q_{3} \in [0,1]$ satisfying $q =q_{1} + q_{2} + q_{3} \leq 1$. (In the actual analysis, we will set $q_{k} = \frac{X_{k}(i)}{n-i}$.) In the following expressions, we assume $\{k,\ell,m\} = \{1,2,3\}$. For $e \in \{b,f\}$,
\begin{align*}
	Q(A_{e}^{(k)}, A_{b}^{(\ell)}) &= q_{m}\\
        Q(A_{e}^{(k)}, A_{b}^{(m)}) &= q_{\ell}\\
	Q(A_{e}^{(k)}, B_{f}^{(k\ell)}) =Q(A_{e}^{(k)}, B_{f}^{(km)})&= \frac{1-q}{2}\\
	Q(A_{e}^{(k)}, B_{b}^{(k\ell)}) = Q(A_{e}^{(k)}, B_{b}^{(km)})&= \frac{q_{k}}{2}\\
	Q(A_{e}^{(k)}, V) &=0 \qquad\qquad \text{ for all other }V \in \mathcal{V}\\
    \intertext{and}
	Q(B_{e}^{(k\ell)}, A_{b}^{(k)}) &= q_{m}\\
	Q(B_{e}^{(k\ell)}, A_{f}^{(m)}) &= 1-q\\
	Q(B_{e}^{(k\ell)}, B_{b}^{(km)}) &= q_{k}\\
	Q(B_{e}^{(k\ell)}, B_{b}^{(\ell m)}) &= q_{\ell}\\
	Q(B_{e}^{(k\ell)}, V) &= 0 \qquad\qquad \text{ for all other }V \in \mathcal{V}.
\end{align*}
The transition probabilities above determine a Markov chain on $\mathcal{V}$ for any values of $q_{1},q_{2},q_{3} \in [0,1]$. When $q_{1},q_{2},q_{3} \in (0,1)$, $Q$ is aperiodic and irreducible, and thus it has a unique stationary distribution $\pi$ (dependent on the $q_{k}$'s). To verify irreducibility, we created a Jupyter notebook that constructs a graph based on the states of $Q$ and confirms that it is strongly connected. The notebook is available at the GitHub repository\footnote{\url{https://github.com/dwillhalm/sudoku_number/blob/main/irreducibility.ipynb}}. The irreducible chain $Q$ is aperiodic since $$Q^2(A_b^1,A_b^1) \ge Q(A_b^1,A_b^2)Q(A_b^2,A_b^1) = q_3^2 > 0$$ and $$Q^3(A_b^1,A_b^1) \ge Q(A_b^1,A_b^2)Q(A_b^2,A_b^3)Q(A_b^3,A_b^1) = q_3 q_1 q_2 > 0.$$
In the balanced regime, i.e., when $q_{k} = \frac{q}{3}$ for all $k$, we use the notation $Q_{\text{bal}}$ for the transition matrix. We have an explicit expression for the stationary distribution of $Q_{\text{bal}}$, which we denote by $\pi_{\text{bal}}$:
	\begin{equation}\label{eq:stationary_dist}
		\pi_{\text{bal}}(V) = \begin{cases} 
				\frac{q}{6}  & \quad\text{ if }V \in \mathcal{A}_{b}\\
				\frac{q}{12} & \quad\text{ if }V \in \mathcal{B}_{b}\\
				\frac{1-q}{6}  & \quad\text{ if }V \in \mathcal{A}_{f}\\
				\frac{1-q}{12} & \quad\text{ if }V \in \mathcal{B}_{f}
			\end{cases}.
	\end{equation}
Here $\mathcal{B}_{b}$ is the set of types in $\mathcal{B}$ with a backward edge, $\mathcal{A}_{b}$ the set of types in $\mathcal{A}$ with a backward edge, etc. It is straightforward to verify that $\pi_{\text{bal}}$ is indeed stationary for $Q$ in the balanced regime. 

Before analyzing the chain, we recall some definitions and basic results about matrix and vector norms as well as Markov chain mixing. For a vector $v \in \R^{K}$, the $\ell^{1}$ and $\ell^{\infty}$ norms of $v$ are, respectively,
$$
    \norm{v}_{1} = \sum_{i=1}^{K}|v_{i}| \quad \text{ and } \quad \norm{v}_{\infty} = \max_{i \in [K]}|v_{i}|.
$$
Any norm $\norm{\cdot}$ on $\R^{K}$ induces a norm on matrices $M \in \R^{K \times K}$ via
$$
    \norm{M} = \sup_{\substack{v \in \R^{K} \\ \norm{v} = 1}} \norm{Mv}.
$$
It is an easy exercise to show that the following expressions hold for the $\ell^{1}$ and $\ell^{\infty}$ matrix norms:
\begin{equation}\label{eq:norm_identities}
    \norm{M}_{1} = \max_{j \in [K]}\sum_{i=1}^{K}|M_{ij}| \quad \text{ and } \quad \norm{M}_{\infty} = \max_{i \in [K]}\sum_{j=1}^{K}|M_{ij}|.
\end{equation}
In the study of Markov chain mixing, it is conventional to consider convergence in the total-variation norm $\norm{\cdot}_{TV}$. For probability distributions $\mu,\nu \in \R^{K}$, define
$$
    \norm{\mu - \nu}_{TV} = \max_{A \subseteq [K]} \Big|\sum_{i\in A} \mu_i - \nu_i\Big|.
$$
We note that $\norm{\mu - \nu}_{TV} = \frac{1}{2}\norm{\mu - \nu}_{1}$ (see \cite[Proposition 4.2]{levin2017markov}). 

Let $P$ be the transition matrix for an aperiodic, irreducible Markov chain on a state space of size $K$, and let $\pi$ be its stationary distribution. Let $d(t) = \sup_{\mu}\norm{\mu P^{t} - \pi}_{TV}$, where the $\sup$ is taken over all probability distributions $\mu$ on the state space of $P$. The $\vep$-mixing time of $P$, denoted $t_{\text{mix}}(\vep)$, is defined as
$$
    t_{\text{mix}}(\vep) = \min\{t\,:\,d(t) \leq \vep\}.
$$

\subsection{Perturbations of the balanced regime}

Our goal is to understand the behaviour of the chain $Q$ when  $(1 - \gamma)\frac{q}{3} < q_{k} < (1 + \gamma)\frac{q}{3}$ holds for all $k$ for some $\gamma \in (0,1)$, which is a perturbation of the balanced regime. Specifically, we approximate the stationary distribution and bound the mixing time of $Q$ under such a perturbation. To do this we make repeated use of Lemma~\ref{lem:transitions} below. 

\begin{lemma}\label{lem:transitions}
	There exists a constant $C_{\textup{cond}} > 1$ so that the following holds for all $q \in (0,1)$: let $(W_{i})_{i\in \N_{0}}$ be a Markov chain on $\mathcal{V}$ with transition matrix $Q_{\textup{bal}}$ with parameter $q$; for any $V \in \mathcal{V}$ and any $V' \in \mathcal{A}_{b} \cup \mathcal{B}_{b}$, we have
	$$
		\prob_{V}(W_{6} = V') \geq \frac{\pi_{\textup{bal}}(V')}{C_{\textup{cond}}}
	$$
	and for any $V \in \mathcal{V}$ and $V' \in \mathcal{A}_{f} \cup \mathcal{B}_{f}$, 
	$$
		\prob_{V}(W_{i} = V' \textup{ for some }i \in \{0,1,\dots,6\}) \geq \frac{\pi_{\textup{bal}}(V')}{C_{\textup{cond}}},
	$$
    where $\prob_V$ denotes probability conditioned on the chain started at $V$.
\end{lemma}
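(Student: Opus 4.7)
The plan is to exhibit explicit paths in the chain and to lower bound the resulting probabilities uniformly in $q$. The two statements behave differently with respect to $q$: for $V' \in \mathcal{A}_b \cup \mathcal{B}_b$ we have $\pi_{\textup{bal}}(V') = \Theta(q)$, so we need $P^6(V, V') = \Omega(q)$, while for $V' \in \mathcal{A}_f \cup \mathcal{B}_f$ we have $\pi_{\textup{bal}}(V') = \Theta(1-q)$, so we need the hitting probability within $6$ steps to be $\Omega(1-q)$.

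For the first statement, any transition landing at $V' \in \mathcal{A}_b \cup \mathcal{B}_b$ carries a factor of either $q/6$ or $q/3$, depending on the source state and whether $V' \in \mathcal{A}_b$ or $\mathcal{B}_b$. My strategy would thus be to fix a final, sixth step that transitions to $V'$ with probability at least $q/6$, and then bound from below the probability that the chain is in an appropriate ``precursor'' state at time $5$. Symmetrically, for the second statement, transitions into $\mathcal{A}_f \cup \mathcal{B}_f$ carry a $(1-q)$ factor, so I would bound from below the probability of being in an appropriate precursor at some time $i \leq 5$ and take a single step (or zero steps, in the case $V = V'$) into $V'$.

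The core substep is: for every starting state $V$ and every precursor state $W$, show that within at most $5$ steps the chain reaches $W$ with probability bounded below by a universal constant, uniformly in $q \in (0,1)$. The useful structural observation is that from any $A$-state the chain moves to some $B$-state with probability $(1-q) + q/3 = 1 - 2q/3$, and symmetrically from any $B$-state it moves to some $A$-state with probability $q/3 + (1-q) = 1 - 2q/3$. Thus the chain alternates between $A$-type and $B$-type states sufficiently often regardless of $q$. Combined with the colour freedom on each transition (from any state we have a two-way choice in colour at the next step with either $\Theta(q)$ or $\Theta(1-q)$ probabilities), one can route from any $V$ to any desired $W$ in a constant number of steps. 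The reason five preparatory steps are allotted is to accommodate combining multiple path structures: in the $q \to 0$ regime one routes primarily through $B_f$-type intermediates (each step contributing $\Theta(1-q) = \Theta(1)$), while in the $q \to 1$ regime one routes through $A_b$-type intermediates (each step contributing $\Theta(q) = \Theta(1)$). Summing contributions from at least these two kinds of paths gives a lower bound on $P^5(V, W)$ that is uniform in $q$.

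The main obstacle in a rigorous write-up is the case analysis, with $18$ starting states and $18$ targets. The symmetry of $Q_{\textup{bal}}$ under permutations of the three colours reduces the effective number of cases to a small set of equivalence classes, but the bookkeeping remains delicate. The subtle point is ensuring that the two extreme regimes $q \to 0$ and $q \to 1$ are handled uniformly: for each $(V, V')$ pair one should sum at least two complementary path probabilities so that the intermediate $q$-dependence cancels between regimes, leaving only the single $\Omega(q)$ or $\Omega(1-q)$ factor from the transition to $V'$ itself. Once this is accomplished, $C_{\textup{cond}}$ can be chosen as the reciprocal of the smallest constant arising from the path-probability calculations across all equivalence classes.
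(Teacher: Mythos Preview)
The paper does not give a hand proof of this lemma at all: it delegates to a symbolic computation in a Jupyter notebook, computing powers of $Q_{\textup{bal}}$ as polynomials in $q$ and checking the inequalities directly. Your proposal is therefore a genuinely different route---a by-hand path-counting argument in place of computer verification.

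Your overall plan is reasonable, but the ``core substep'' as you state it is false. You write that for every single precursor state $W$, the chain reaches $W$ within $5$ steps with probability bounded below by a universal constant uniformly in $q$. This cannot hold: every transition into a $b$-type state carries a factor at most $q/3$, so $Q_{\textup{bal}}^j(V,W)\le q/3$ for any $W\in\mathcal{A}_b\cup\mathcal{B}_b$ and any $j\ge 1$; symmetrically $Q_{\textup{bal}}^j(V,W)\le 1-q$ for $W\in\mathcal{A}_f\cup\mathcal{B}_f$. No single $W$ is reachable with probability $\Omega(1)$ uniformly in $q$. Your later paragraph, where you sum two families of paths (one through $f$-type intermediates for small $q$, one through $b$-type intermediates for $q$ near $1$), is the correct repair; but then the substep must be stated for a \emph{set} of precursors, or directly as a lower bound on a sum of length-$6$ path probabilities, not for a single $W$.

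There is also a conflation of ``reach $W$ within $5$ steps'' and ``be at $W$ at time exactly $5$.'' For the first inequality of the lemma you need $Q_{\textup{bal}}^6(V,V')$, so a hitting bound by time $5$ is not sufficient; you must exhibit paths of length exactly six. Your two-regime idea does this implicitly, but the substep should be framed as a lower bound on $Q_{\textup{bal}}^5(V,\cdot)$ summed over the precursor set, not as a hitting probability.

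With those corrections the strategy can be made to work, though the residual case analysis (even after exploiting colour-permutation symmetry) is exactly the tedium the paper avoided by using a computer. Your route would buy a human-checkable argument and perhaps an explicit value of $C_{\textup{cond}}$; the paper's route buys brevity.
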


The proof is computationally intensive; therefore, we verified it using a Jupyter notebook that computes powers of $Q_\textup{bal}$ symbolically with the SimPy library. The notebook is available at the GitHub repository\footnote{\url{https://github.com/dwillhalm/sudoku_number/blob/main/matrix_calcs.ipynb}}. The constant $C_{\text{cond}}$ in Lemma~\ref{lem:transitions} is so-named because of its relationship to a \textit{condition number} of the chain $Q_{\text{bal}}$. This number measures the sensitivity of the stationary distribution $\pi_{\text{bal}}$ to perturbations of the parameters $q_{1}, q_{2}$, and $q_{3}$ away from the balanced regime. We formalize this in the next lemma. 

\begin{lemma}\label{lem:condition_number}
    Let $q \in (0,1)$ and $q = q_{1} + q_{2} + q_{3}$. Let $\gamma \in (0,1)$ and suppose $(1-\gamma)\frac{q}{3} < q_{k} < (1 + \gamma)\frac{q}{3}$ for all $k$. Let $Q$ be the corresponding transition matrix. Then the stationary distribution $\pi$ of $Q$ satisfies	
	$$
		\norm{\pi - \pi_{\textup{bal}}}_{\infty} \leq 3C_{\textup{cond}}\gamma q
	$$
    where $\pi_{\textup{bal}}$ is the stationary distribution of the chain $Q_{\textup{bal}}$ with parameter $q$.
\end{lemma}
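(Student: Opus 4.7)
Here is my proof plan. The idea is to set up the standard perturbation equation for $\delta := \pi - \pi_{\mathrm{bal}}$, then invert on the mean-zero subspace using a Doeblin-type bound derived from Lemma~\ref{lem:transitions}.

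Write $D := Q - Q_{\mathrm{bal}}$. From $\pi Q = \pi$ and $\pi_{\mathrm{bal}} Q_{\mathrm{bal}} = \pi_{\mathrm{bal}}$ one obtains the perturbation identity $\delta(I - Q) = \pi_{\mathrm{bal}} D$. Since both $\delta$ and $\pi_{\mathrm{bal}} D$ have coordinates summing to zero, and since (as shown below) $Q$ contracts mean-zero signed measures in $\ell^{1}$, this inverts as
$$
\delta \;=\; \pi_{\mathrm{bal}} D \sum_{k=0}^{\infty} Q^{k}.
$$
A term-by-term inspection of the defining formulas for $Q$ and $Q_{\mathrm{bal}}$ then gives three useful facts: (i)~$Q \ge (1-\gamma) Q_{\mathrm{bal}}$ entrywise, because every $q_{k}$ entry satisfies $q_{k} \ge (1-\gamma) q/3$ and the forward-edge entries $(1-q)/2$ and $1-q$ are unchanged; (ii)~each row of $D$ has $\ell^{1}$ norm at most $\gamma q$, so $\|\pi_{\mathrm{bal}} D\|_{1} \le \gamma q$; and (iii)~$D$ vanishes on every column indexed by a forward state (the forward transition probabilities are $q_{k}$-free), so $\pi_{\mathrm{bal}} D$ is supported on $\mathcal{A}_{b} \cup \mathcal{B}_{b}$.

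Raising (i) to the sixth power gives $Q^{6} \ge (1-\gamma)^{6} Q_{\mathrm{bal}}^{6}$, and combining with the first inequality of Lemma~\ref{lem:transitions} yields the minorization
$$
Q^{6}(V,V') \;\ge\; \frac{(1-\gamma)^{6}}{C_{\mathrm{cond}}}\, \pi_{\mathrm{bal}}(V') \qquad \text{for all } V \in \mathcal{V},\; V' \in \mathcal{A}_{b} \cup \mathcal{B}_{b}.
$$
Summing over $V'$ and using $\pi_{\mathrm{bal}}(\mathcal{A}_{b} \cup \mathcal{B}_{b}) = q$ produces a minorizing sub-probability measure of total mass $(1-\gamma)^{6} q / C_{\mathrm{cond}}$, so the standard coupling argument gives $\|\alpha Q^{6}\|_{1} \le \bigl(1 - (1-\gamma)^{6} q / C_{\mathrm{cond}}\bigr) \|\alpha\|_{1}$ for every mean-zero signed measure $\alpha$. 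Summing the resulting geometric series in blocks of length $6$ and plugging in $\|\pi_{\mathrm{bal}} D\|_{1} \le \gamma q$ yields a bound of the form
$$
\|\delta\|_{\infty} \;\le\; \|\delta\|_{1} \;\le\; \frac{6 C_{\mathrm{cond}} \gamma}{(1-\gamma)^{6}},
$$
which for $\gamma$ small is of order $C_{\mathrm{cond}} \gamma$.

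The main obstacle is then sharpening the constant to the claimed $3 C_{\mathrm{cond}} \gamma q$. For this, one should exploit the structural information more aggressively: a short direct calculation shows that $(\pi_{\mathrm{bal}} D)(A_{b}^{(k)}) = \tfrac{1}{4}(q/3 - q_{k})$ and $(\pi_{\mathrm{bal}} D)(B_{b}^{(k\ell)}) = \tfrac{1}{4}(q_{k} - q/3)$ with $(\pi_{\mathrm{bal}} D)(V') = 0$ on the forward states, so that $|(\pi_{\mathrm{bal}} D)(V')| \le \gamma q/12$ pointwise. Since the minorizing measure in the Doeblin condition is itself proportional to $\pi_{\mathrm{bal}}$ restricted to $\mathcal{A}_{b} \cup \mathcal{B}_{b}$, a careful alignment of these two supports inside the contraction argument (rather than the crude $\ell^{1}$-bound above) introduces the extra factor of $q$ in the numerator and produces the constant $3$. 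Keeping the $q$-dependence visible while passing the mixing bound from $Q_{\mathrm{bal}}$ to the perturbed $Q$ is the delicate bookkeeping on which the sharp form of the lemma rests.
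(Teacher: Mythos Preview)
Your Neumann-series-plus-Doeblin argument is correct as far as it goes, but it only yields $\|\delta\|_{1} \le 6C_{\textup{cond}}\gamma/(1-\gamma)^{6}$, which is $O(\gamma)$ rather than the $O(\gamma q)$ that the lemma states. The final paragraph acknowledges this and gestures at a fix, but the hint (``alignment of supports'') does not actually recover the missing factor of $q$: the contraction rate of $Q^{6}$ on mean-zero signed measures is governed by the total mass of the minorizing measure, which is $(1-\gamma)^{6}q/C_{\textup{cond}}$ regardless of where the signed measure is supported. So the geometric series still contributes a $1/q$, cancelling the $q$ in $\|\pi_{\textup{bal}}D\|_{1}\le\gamma q$. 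The pointwise bound $|(\pi_{\textup{bal}}D)(V')|\le\gamma q/12$ is no better in this respect, since there are a constant number of states.

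The paper avoids this altogether by quoting a perturbation bound of Cho and Meyer: for irreducible $P$ and $\wt{P}=P-E$,
\[
\|\pi_{P}-\pi_{\wt{P}}\|_{\infty}\le \tfrac{1}{2}\,\kappa(P)\,\|E\|_{\infty},
\qquad
\kappa(P)=\max_{y}\max_{x\neq y}\pi_{P}(y)\,\E_{x}[\tau_{y}].
\]
The point is that $\kappa(Q_{\textup{bal}})$ is bounded uniformly in $q$. Lemma~\ref{lem:transitions} gives $\prob_{V}(\tau_{V'}>6i)\le(1-\pi_{\textup{bal}}(V')/C_{\textup{cond}})^{i}$ and hence $\E_{V}[\tau_{V'}]\le 6C_{\textup{cond}}/\pi_{\textup{bal}}(V')$, so the product $\pi_{\textup{bal}}(V')\,\E_{V}[\tau_{V'}]\le 6C_{\textup{cond}}$ with the $\pi_{\textup{bal}}(V')$'s cancelling. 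Combined with $\|E\|_{\infty}\le\gamma q$ this gives exactly $3C_{\textup{cond}}\gamma q$. The cancellation in $\pi(y)\E_x[\tau_y]$ is precisely what your $\ell^{1}$-contraction argument cannot see, since it treats the chain's slowness (of order $1/q$) as a uniform obstacle rather than something compensated state-by-state by the stationary weights. If you want to rescue your approach, you would need to work coordinatewise in $\ell^{\infty}$ and reproduce that cancellation directly---essentially re-deriving the Cho--Meyer bound.
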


\begin{proof}
	We use a result from~\cite{cho2000markov}: for irreducible transition matrices $P$ and $\wt{P} = P - E$ on a finite state space $\mathcal{X}$ with stationary distributions $\pi_{P}$ and $\pi_{\wt{P}}$, respectively, we have
	\begin{equation}\label{eq:sensitivity}
		\norm{\pi_{P} - \pi_{\wt{P}}}_{\infty} \leq \frac{1}{2}\kappa(P) \norm{E}_{\infty}
	\end{equation}
	where
	$$
		\kappa(P) = \max_{y \in \mathcal{X}}\max_{x \neq y}\pi_P(y)\E_{x}[\tau_{y}].
	$$
	Here, $\tau_{y}$ denotes the hitting time of $y$ of the chain $P$ and $\E_{x}$ denotes expectation conditioned on the chain $P$ started at $x$. Let $E = Q_{\text{bal}} - Q$. Using the transition probabilities for $Q_{\text{bal}}$ and $Q$, we observe that $\norm{E}_{\infty} \leq \gamma q$. 
	
	Let $(W_{i})_{i \in \N_{0}}$ be a walk according to $Q_{\text{bal}}$ and $\tau_V$ be the hitting time of state $V$ by the walk $(W_i)_{i\in\N_0}$. Lemma~\ref{lem:transitions} implies that for any state $V' \in \mathcal{V}$, 
	\begin{equation}\label{eq:hitting_prob}
		\prob_{V}(W_{i} = V'\text{ for some }i=0,1,\dots,6) \geq \frac{\pi_{\text{bal}}(V')}{C_{\text{cond}}} \quad \text{ for all }V \in \mathcal{V}
	\end{equation}
	from which it follows that $\prob_{V}(\tau_{V'} > 6i) \leq (1- \frac{\pi_{\text{bal}}(V')}{C_{\text{cond}}})^{i}$ for all $i \in \N_{0}$ and any $V.$ Thus	
	\begin{align*}
		\E_{V}[\tau_{V'}] &= \sum_{i = 1}^{\infty}\prob_{V}(\tau_{V'} \geq i)
		\leq 6\sum_{i=0}^{\infty}\prob_{V}(\tau_{V'} > 6i)
		\leq 6 \sum_{i=0}^{\infty}\left(1- \frac{\pi_{\text{bal}}(V')}{C_{\text{cond}}}\right)^{i}
		=\frac{6C_{\text{cond}}}{\pi_{\text{bal}}(V')}
	\end{align*}
	where for the first inequality we use that $\prob_{V}(\tau_{V'} \geq i)$ is non-increasing in $i$. In particular, for any $V, V'$ we have $\pi_{\text{bal}}(V')\E_{V}[\tau_{V'}] \leq 6C_{\text{cond}}$, which implies $\kappa(Q_{\text{bal}}) \leq 6  C_{\text{cond}}$.
\end{proof}

Lemma~\ref{lem:transitions} also allows us to bound the mixing time of a perturbation of $Q_{\text{bal}}$ provided that the perturbation parameter $\gamma$ is small enough. In the proof, it will be clear that the result holds provided $\gamma$ is smaller than some absolute constant (i.e., not dependent on $q$). Since in our application $\gamma$ will  be $o(1)$, we elect not to make this constant explicit for the sake of simplicity.

\begin{lemma}\label{lem:mixing_time}
		Let $q \in (0,1)$ and $q_{1},q_{2},q_{3}$ satisfy $q = q_{1} + q_{2} + q_{3}$ and $(1- \gamma)\frac{q}{3} < q_{k} < (1+\gamma)\frac{q}{3}$ for some $\gamma \in (0,1)$ sufficiently small. Let $Q$ be the corresponding transition matrix. The mixing time of $Q$ satisfies
		$$
			t_{\textup{mix}}(\vep) = O\left(\frac{1}{q}\log \left(\frac{1}{\vep} \right) \right).
		$$
\end{lemma}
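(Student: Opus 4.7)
The plan is to derive a Doeblin-type minorization for $Q^6$ from the minorization for $Q_{\textup{bal}}^6$ supplied by Lemma~\ref{lem:transitions}, and then invoke the standard coupling argument to convert this into the claimed mixing-time bound.

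First, I would verify the elementary perturbation bound $\|Q - Q_{\textup{bal}}\|_\infty \leq \gamma q$. This is a row-by-row calculation: inspecting the explicit entries of $Q$ and $Q_{\textup{bal}}$, each row of $Q - Q_{\textup{bal}}$ has at most four nonzero entries, each of magnitude at most $\gamma q / 3$, using $|q_k - q/3| \leq \gamma q / 3$. The standard telescoping identity
\begin{equation*}
Q^6 - Q_{\textup{bal}}^6 = \sum_{i=0}^{5} Q^{5-i}\bigl(Q - Q_{\textup{bal}}\bigr)Q_{\textup{bal}}^{\,i},
\end{equation*}
combined with the fact that stochastic matrices have operator $\infty$-norm equal to $1$, then yields $\|Q^6 - Q_{\textup{bal}}^6\|_\infty \leq 6\gamma q$. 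Since the $\infty$-norm dominates every individual entry, this implies $|Q^6(V, V') - Q_{\textup{bal}}^6(V, V')| \leq 6\gamma q$ uniformly in $V, V'$.

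Next, combining this perturbation estimate with Lemma~\ref{lem:transitions} gives $Q^6(V, V') \geq \pi_{\textup{bal}}(V')/C_{\textup{cond}} - 6\gamma q$ for every $V \in \mathcal{V}$ and every $V' \in \mathcal{A}_{b} \cup \mathcal{B}_{b}$. Using the explicit formula \eqref{eq:stationary_dist}, the minimum value of $\pi_{\textup{bal}}(V')$ on backward-edge states is $q/12$, so for $\gamma$ small enough that $6\gamma q \leq q/(24 C_{\textup{cond}})$ (i.e.\ $\gamma \leq 1/(144 C_{\textup{cond}})$) we retain $Q^6(V, V') \geq \pi_{\textup{bal}}(V')/(2 C_{\textup{cond}})$. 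Defining $\nu$ to be the probability distribution proportional to $\pi_{\textup{bal}}$ restricted to $\mathcal{A}_b \cup \mathcal{B}_b$ (whose total $\pi_{\textup{bal}}$-mass equals $q$), this rewrites as a Doeblin minorization $Q^6(V, \cdot) \geq \alpha\,\nu(\cdot)$ with $\alpha = q/(2 C_{\textup{cond}})$.

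Finally, the standard Doeblin coupling yields $\sup_V \|Q^{6k}(V, \cdot) - \pi\|_{TV} \leq (1-\alpha)^k$, so choosing $k = \lceil \alpha^{-1}\log(1/\varepsilon)\rceil$ gives $t_{\textup{mix}}(\varepsilon) \leq 6k = O(q^{-1}\log(1/\varepsilon))$, as required. The main obstacle is the perturbation step: the Doeblin inequality survives only if the error $6\gamma q$ is dwarfed by the unperturbed minorization $\pi_{\textup{bal}}(V')/C_{\textup{cond}} \gtrsim q/C_{\textup{cond}}$, which reduces the ``sufficiently small'' hypothesis on $\gamma$ to a purely numerical condition depending only on $C_{\textup{cond}}$. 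This is harmless for the eventual application since $\gamma = o(1)$ there.
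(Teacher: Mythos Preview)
Your proposal is correct and takes essentially the same approach as the paper: both establish a Doeblin minorization for $Q^6$ by combining Lemma~\ref{lem:transitions} with a perturbation bound $\|Q^6 - Q_{\textup{bal}}^6\|_\infty = O(\gamma q)$, then convert this into the mixing-time estimate via the standard minorization argument (the paper cites Rosenthal's result, you invoke the Doeblin coupling directly). The only cosmetic differences are that the paper lower-bounds $\alpha$ using the single state $A_b^{(1)}$ rather than all backward-edge states, and obtains the perturbation bound via the binomial expansion $\sum_{m=1}^6 \binom{6}{m}(\gamma q)^m$ rather than your telescoping sum.
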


\begin{proof}
    We will use a \textit{minorization condition} on $Q$ to derive the bound on the mixing time. Suppose we show that for some $i_{0} \in \N$ the following inequality holds:
    \begin{equation}\label{eq:alpha}
        \alpha := \sum_{V' \in \mathcal{V}}\min_{V \in \mathcal{V}}Q^{i_{0}}(V, V') > 0.
    \end{equation}
   Then $Q$ satisfies the minorization condition $Q^{i_{0}}(V, \cdot) \geq \alpha \nu(\cdot)$, where $\nu$ is the probability distribution on $\mathcal{V}$ defined by 
    $$
        \nu(V') = \frac{\min_{V \in \mathcal{V}}Q^{i_{0}}(V,V')}{\alpha}.
    $$
    By \cite[Proposition 2]{rosenthal1995minorization}, this implies $\norm{\mu Q^{i} - \pi}_{TV} \leq (1 - \alpha)^{\lfloor i/i_{0} \rfloor}$ for any $i \in \N_{0}$ and any initial distribution $\mu$ on $\mathcal{V}$. We will show that \eqref{eq:alpha} holds for $i_{0} = 6$ with $\alpha$ at least to small constant times $q$, from which we derive $t_{\text{mix}}(\epsilon) = O\left(\frac{1}{q}\log\left(\frac{1}{\epsilon} \right)\right)$.
    
	Let $Q_{\text{bal}}$ be the balanced transition matrix with parameter $q$ and let $E = Q_{\text{bal}} - Q$. As in the proof of Lemma~\ref{lem:condition_number}, we have that $\norm{E}_{\infty} \leq \gamma q$. It is an easy exercise to show that
	$$
		\norm{Q_{\text{bal}}^{6} - Q^{6}}_{\infty} \leq \sum_{m=1}^{6}\binom{6}{m}(\gamma q)^{m} = O(\gamma q).
	$$	
	By the above and Lemma~\ref{lem:transitions}, we have
	\begin{equation}\label{eq:transition_prob_lb} \alpha \geq
		 \min_{V \in \mathcal{V}}Q^{6}(V, A_{b}^{(1)}) \geq \min_{V \in \mathcal{V}}Q_{\text{bal}}^{6}(V, A_{b}^{(1)}) - O(\gamma q) \geq q\left(\frac{1}{C_{\text{cond}}} - O(\gamma) \right).
	\end{equation}
	Observe that the right-hand side of (\ref{eq:transition_prob_lb}) is at least a small constant times $q$ provided that $\gamma$ is sufficiently small with respect to $\frac{1}{C_{\text{cond}}}.$ 
\end{proof}

\subsection{Approximation by $Q$}

Here we compute the error incurred by approximating a short segment of \textbf{Sudoku} with a run of the chain $Q.$

\begin{lemma}\label{lem:comparison}
	Let $\omega = \omega(n) \in \N$ and let $i \in [n]$ be such that $\frac{\omega}{n-i} = o(1)$. Let $Q$ be the transition matrix with parameters $q_{k} = \frac{X_{k}(i)}{n-i}$ for $k \in \{1,2,3\}$. Let $V_{j} \in \mathcal{V}$ be the type given to vertex $j$ in \textbf{Sudoku}. For $j = 0,1, \dots, \omega$, let $\rho_{j}$ be the probability distribution on $\mathcal{V}$ given by
	$$
		\rho_{j}(V) = \prob(V_{i+j} = V\,|\,\mathcal{F}_{i}).
	$$
	Then, uniformly in $j = 0,1,\dots,\omega$,
	$$
		\norm{\rho_{j} - \rho_{0}Q^{j}}_{1} = O\left( \frac{\omega^{2}}{n-i}\right).
	$$
\end{lemma}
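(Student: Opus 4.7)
The plan is to bound $\|\rho_j - \rho_0 Q^j\|_1$ by a telescoping estimate whose single-step error is of order $k/(n-i)$ and whose sum over $k=0,1,\dots,\omega-1$ is $O(\omega^2/(n-i))$. Starting from the identity
$$\rho_j - \rho_0 Q^j \;=\; \sum_{k=0}^{j-1}(\rho_{k+1} - \rho_k Q)\,Q^{j-1-k},$$
and using that the row-stochastic matrix $Q$ satisfies $\|\mu Q\|_1 \leq \|\mu\|_1$ for every signed row vector $\mu$, the problem reduces to proving $\|\rho_{k+1} - \rho_k Q\|_1 = O(k/(n-i))$ uniformly in $k \in \{0,1,\dots,j-1\}$.

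For the per-step bound I would introduce the \emph{true} one-step kernel
$$P_k(V,V') \;=\; \prob\bigl(V_{i+k+1} = V' \,\big|\, V_{i+k} = V, \mathcal{F}_{i+k}\bigr).$$
Note that $P_k$ is a random matrix, measurable with respect to $\mathcal{F}_{i+k}$, because it depends not only on the current type $V$ but also on the full configuration of unsaturated vertices (their positions and colours) and on $\textbf{ptr}(i+k)$. Averaging, $\rho_{k+1} = \rho_k \, \mathbb{E}[P_k\mid\mathcal{F}_i]$, and a routine triangle-inequality argument yields
$$\|\rho_{k+1} - \rho_k Q\|_1 \;\leq\; \mathbb{E}\!\left[\,\max_{V \in \mathcal{V}}\sum_{V' \in \mathcal{V}}|P_k(V,V') - Q(V,V')|\;\Big|\;\mathcal{F}_i\right],$$
so it is enough to bound the inner maximum deterministically by $O(k/(n-i))$ on every realisation of $\mathcal{F}_{i+k}$ compatible with $\mathcal{F}_i$.

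The difference $P_k - Q$ has three sources, each corresponding to a feature $Q$ deliberately ignores. First, the true marginal probability of a backward edge from $i+k+1$ to a colour-$\ell$ vertex is $X_\ell(i+k)/(n-(i+k))$, whereas $Q$ uses the frozen value $X_\ell(i)/(n-i)$; since $|X_\ell(i+k) - X_\ell(i)| \leq k$ and $|1/(n-(i+k)) - 1/(n-i)| = O(k/(n-i)^2)$ under the hypothesis $\omega/(n-i) = o(1)$, this contributes $O(k/(n-i))$. Second, $Q$ pretends (assumption (ii)) that $p(i+k+1) \notin \{i+1,\dots,i+k\}$; by Lemma~\ref{lem:backward_prob}, the probability of matching one of the at most $k$ unsaturated targets in this interval is at most $k/(n-(i+k)) = O(k/(n-i))$. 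Third, when $V \in \mathcal{B}$, $Q$ ignores the $(B2\text{c})$/$(B2\text{d})$ transitions, which fire exactly when $p(i+k+1) \in \{\textbf{ptr}(i+k)+1,\dots,i+k\}$; the key structural observation is that inside any $B$-run every non-initial vertex is matched backwards into $[\textbf{ptr}]$ by rule $(B1)$, so at most one vertex in $\{\textbf{ptr}(i+k)+1,\dots,i+k\}$ is ever unsaturated, contributing at most $O(1/(n-i))$ per step. Summing these three contributions gives the desired $O(k/(n-i))$ per-step bound, and summing over $k$ completes the proof.

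The main obstacle I expect is the third source: one must rule out a large ``stale-pointer'' contribution when $i - \textbf{ptr}(i)$ is itself large (as could happen if the segment begins in the middle of a long $B$-run). The resolution is the structural observation just given, which bounds by $1$ the number of unsaturated vertices in the interval $(\textbf{ptr}(i),i]$, uniformly in the length of the $B$-run. What remains is a type-by-type verification that the one-step total-variation discrepancy between the true kernel and $Q$ really is captured by the three mechanisms above; this is mechanical but slightly tedious because the colour labels in $\mathcal{V}$ create many cases.
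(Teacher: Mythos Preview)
Your proposal is correct and follows essentially the same approach as the paper's proof. The paper writes $\rho_{j+1}=\rho_jQ+v_j$ and proves $\norm{\rho_j-\rho_0Q^j}_1\le\sum_{j'}\norm{v_{j'}}_1$ by induction, which is exactly your telescoping identity combined with $\norm{\mu Q}_1\le\norm{\mu}_1$; for the per-step error the paper routes through an intermediate matrix $Q_j$ with the ``updated'' parameters $X_\ell(i+j)/(n-(i+j))$ (so that $\norm{Q_j(V_{i+j},\cdot)-\prob(V_{i+j+1}=\cdot\mid\mathcal F_{i+j})}_1=O(1/(n-i))$ captures only the $(B2\text{c})/(B2\text{d})$ mislabelling, and $\norm{Q-Q_j}_\infty=O(\omega/(n-i))$ absorbs both your sources~1 and~2), whereas you compare the true kernel to $Q$ directly and obtain the slightly sharper $O(k/(n-i))$ per step---either way the sum is $O(\omega^2/(n-i))$. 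One small notational point: your kernel $P_k(V,V')$ is only meaningful for $V=V_{i+k}$ since the type is $\mathcal F_{i+k}$-measurable, so the identity $\rho_{k+1}=\rho_k\,\E[P_k\mid\mathcal F_i]$ is not literally correct; the bound you actually need and use is $\norm{\rho_{k+1}-\rho_kQ}_1\le\E\bigl[\norm{P_k(V_{i+k},\cdot)-Q(V_{i+k},\cdot)}_1\mid\mathcal F_i\bigr]$, which is precisely what the paper derives.
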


\begin{proof}
	For $j = 0, 1,\dots, \omega$ and $k \in \{1,2,3\}$, define 
	
	$$
		q_{k}(j)= \frac{X_{k}(i+j)}{n-(i+j)}
	$$
	and let $Q_{j}$ be the transition matrix with parameters $q_{1}(j), q_{2}(j), q_{3}(j)$. The distribution $Q_{j}(V_{i+j},\cdot)$ is quite similar to the distribution $\prob(V_{i+j+1} = \cdot\,|\,\mathcal{F}_{i+j}),$ which is the true one-step transition distribution on types given $\mathcal{F}_{i+j}$. Indeed, these distributions are exactly equal but for one rare case: if vertex $i+j$ is part of a run of type $B$ whose first vertex $i+j'$ (with $j' < j$) is unsaturated at time $i+j$, then with probability $\frac{1}{n-(i+j)}$, vertex $i+j+1$ is joined to $i+j'$ when the next edge in the graph is revealed. This results in a transition from $V_{i+j}$ to $V_{i+j+1}$ which is (potentially) not allowed under $Q_{j}$. However, even when this is the case, at most two coordinates of $Q_{j}(V_{i+j}, \cdot)$ and $\prob(V_{i+j+1} = \cdot\,|\,\mathcal{F}_{i+j})$ differ, each by at most $\frac{1}{n-(i+j)} \leq \frac{1}{n-(i+\omega)}$ in absolute value. Thus, we always have the bound	
	\begin{equation}\label{eq:tv_bound1}
		\norm{Q_{j}(V_{i+j}, \cdot) - \prob(V_{i+j+1} = \cdot\,|\,\mathcal{F}_{i+j})}_{1} \leq \frac{2}{n-(i+j)} \leq \frac{2}{n-(i+\omega)} = O\left(\frac{1}{n-i} \right).
	\end{equation}
	We can also easily bound $\norm{Q(V_{i+j}, \cdot) - Q_{j}(V_{i+j}, \cdot)}_{1}$. For any $k$, the value of $X_{k}(\cdot)$ changes by at most $j$ over the interval $[i+1,\dots,i+j]$. Thus, 
	\begin{equation}\label{eq:j_transition_lb}
		q_{k}(j) = \frac{X_{k}(i+j)}{n-(i+j)} \geq \frac{X_{k}(i) - j}{n-i} \geq q_{k} - \frac{\omega}{n-i}
	\end{equation}
	and
	\begin{eqnarray}\label{eq:j_transition_ub}
		q_{k}(j) = \frac{X_{k}(i+j)}{n-(i+j)} &\leq& \frac{X_{k}(i) + \omega}{n-(i+\omega)} \nonumber \\
		&=&q_{k}\left( \frac{1}{1-\frac{\omega}{n-i}}\right) + \frac{\omega}{n-(i+\omega)} \nonumber \\
		&=& q_k \left(1 + O\left( \frac{\omega}{n-i}\right)\right) + O\left( \frac{\omega}{n-i}\right) = q_{k} + O\left( \frac{\omega}{n-i}\right).
	\end{eqnarray}
	 Together, (\ref{eq:j_transition_lb}) and (\ref{eq:j_transition_ub}) give that $|q_{k} - q_{k}(j)| = O\left( \frac{\omega}{n-i}\right)$ for all $k$. Note that this also implies $|q-q(j)| = O\left(\frac{\omega}{n-i} \right)$, and it easily follows that 
	 \begin{equation}\label{eq:tv_bound2}
	 	\norm{Q(V_{i+j}, \cdot) - Q_{j}(V_{i+j}, \cdot)}_{1} = O\left( \frac{\omega}{n-i} \right).
	 \end{equation}	
	Applying the triangle inequality and using the bounds (\ref{eq:tv_bound1}) and (\ref{eq:tv_bound2}) yields
	
	\begin{equation}\label{eq:tv_bound3}
		\norm{Q(V_{i+j}, \cdot) - \prob(V_{i+j+1} = \cdot\,|\,\mathcal{F}_{i+j})}_{1} = O\left( \frac{\omega}{n-i} \right).
	\end{equation}
	Now, let $\rho_{j}$ be the distribution $\prob(V_{i+j} = \cdot\,|\,\mathcal{F}_{i})$. (We think of $\rho_{j}$ as a row vector.) Let $V' \in \mathcal{V}$. For any $j \geq 0$, we have
	\begin{align*}
		\rho_{j+1}(V') &= \E[\unit\{V_{i+j+1} = V'\}\,|\,\mathcal{F}_{i}]\\
		&=\E\left[ \unit\{V_{i+j+1} = V'\}\sum_{V \in \mathcal{V}}\unit\{V_{i+j} = V\}\,\Big\vert\,\mathcal{F}_{i}\right]\\
		&=\E\left[ \E\left[  \unit\{V_{i+j+1} = V'\}\sum_{V \in \mathcal{V}}\unit\{V_{i+j} = V\}\,\Big\vert\,\mathcal{F}_{i+j}\right] \,\Big\vert\, \mathcal{F}_{i}\right]\\
		&=\E\left[\sum_{V \in \mathcal{V}}\unit\{V_{i+j} = V\} \E\left[  \unit\{V_{i+j+1} = V'\}\,\vert\,\mathcal{F}_{i+j}\right] \,\Big\vert\, \mathcal{F}_{i}\right]\\
		&=\E\left[\sum_{V \in \mathcal{V}}\unit\{V_{i+j} = V\} \left(Q(V_{i+j}, V') + O\left(\frac{\omega}{n-i} \right)\right) \,\Big\vert\, \mathcal{F}_{i}\right]\\
		&=\E\left[ \sum_{V \in \mathcal{V}}\unit\{V_{i+j} = V\}Q(V,V')\,\Big\vert\,\mathcal{F}_{i}\right] + O\left(\frac{\omega}{n-i} \right)\\
		&=(\rho_{j}Q)(V')+ O\left(\frac{\omega}{n-i} \right).
	\end{align*}
	From the above, we may write
	\begin{equation}
		\rho_{j+1} = \rho_{j}Q + v_{j},
	\end{equation}
	where $v_{j}$ is an error vector with $\norm{v_{j}}_{1} = O\left(\frac{\omega}{n-i} \right)$, uniformly in $j$. We claim that 
	\begin{equation}\label{eq:claim}
		\norm{\rho_{j} - \rho_{0}Q^{j}}_{1} \leq \sum_{j'=0}^{j}\norm{v_{j'}}_{1}.
	\end{equation}
	Since $\sum_{j'=0}^{j}\norm{v_{j'}}_{1} \leq\sum_{j'=0}^{\omega}\norm{v_{j'}}_{1} = O\left( \frac{\omega^{2}}{n-i}\right)$ for all $j$, the main result follows from (\ref{eq:claim}). Obviously the claim is true for $j=0$. Suppose the statement holds for some $j$. Then,
	\begin{align*}
		\norm{\rho_{j+1} - \rho_{0}Q^{j+1}}_{1} &= \norm{\rho_{j}Q + v_{j+1} - \rho_{0}Q^{j+1}}_{1}\\
		&\leq \norm{(\rho_{j}-\rho_{0}Q^{j})Q}_{1} + \norm{v_{j+1}}_{1}.
	\end{align*}
	We have
	\begin{align*}
		\norm{(\rho_{j}-\rho_{0}Q^{j})Q}_{1} = \norm{Q^{\top}(\rho_{j}-\rho_{0}Q^{j})^{\top}}_{1} \leq \norm{Q^{\top}}_{1} \norm{(\rho_{j}-\rho_{0}Q^{j})}_{1}.
	\end{align*}
	Since $Q$ is a stochastic matrix, the columns of $Q^{\top}$ sum to $1$, and hence $\norm{Q^{\top}}_{1} = 1$ by \eqref{eq:norm_identities}. Thus,
	$$
		\norm{(\rho_{j}-\rho_{0}Q^{j})Q}_{1} \leq \norm{\rho_{j}-\rho_{0}Q^{j}}_{1} \leq \sum_{j'=0}^{j}\norm{v_{j'}}_{1}
	$$
	by the inductive hypothesis. Thus, we conclude (\ref{eq:claim}) holds for all $j =0,\dots,\omega$.
\end{proof}

\section{The differential equations method}\label{sec:DEs}

\subsection{Notation and parameters}

Recall from Section~\ref{sec:burn_in} that after the burn-in phase, vertices $n, 1, 2,\dots, i_{0}$ are properly $3$-coloured and have had their edges partially revealed by the matching process. In this section we will analyze \textbf{Sudoku} on vertices $i_{0}+1, \dots, i_1$. (Recall that $i_{0} = \frac{n}{\log\log n}$ and $i_1 = n - \frac {n}{\log \log n}$.) We would like to understand the average trajectories of the variables $X_{k}(i)$ over short intervals of length $\omega$. To this end, given some $\omega$, we recall the definitions of the translated and rescaled variables from Section~\ref{sec:random_matching_process}:
$$
	\wt{X}_{k}(i) = \frac{1}{\omega}X_{k}(i_{0} + i\omega)
$$
and 
$$
	\wt{X}(i) = \wt{X}_{1}(i) + \wt{X}_{2}(i) + \wt{X}_{3}(i) = \frac{1}{\omega}X(i_{0} + i\omega)
$$
for $k \in \{1,2,3\}$ and integers $i$ with $0 \leq i \leq \frac{i_{1} - i_{0}}{\omega}$. 

Define $N = \frac{n}{\omega}$ and $t_{i} = \frac{i_{0} + i  \omega}{n} = \frac{i_{0}}{n} + \frac{i}{N}$. Recall the function $x(t) = t(1-t)$. Our goal is to show that the random variables $\wt{X}_{k}(i)$ all remain quite close to $\frac{N}{3}x(t_{i})$ as $i$ ranges from $0$ to $i_{\max} = \frac{i_{1} - i_{0}}{\omega} = N\left(1 - \frac{2}{\log \log n} \right)$. To this end, we will define a suitable error function $\vep(t)$ on $(0,1)$ for which it holds that $|\wt{X}_{k}(i) - \frac{N}{3}x(t_{i})| < \frac{N}{3}\vep(t_{i})$ for all $0 \leq i \leq i_{\max}$ a.a.s. We generally follow the roadmap in the excellent tutorial \cite{bennett2022gentle}.

The method requires a number of parameters, most of which must be chosen with some care. Below is a catalogue: 
	\begin{align*}
		\omega &= n^{1/3}\\
		N &= \frac{n}{\omega} = n^{2/3} \\
            i_{\max} &=  \frac{i_{1} - i_{0}}{\omega} = N\left(1 - \frac{2}{\log \log n} \right) \\
		t_{i}&= \frac{i_{0} + i\omega}{n}  = \frac{i_{0}}{n} + \frac{i}{N} \quad \text{ for } i = 0,1,\dots,i_{\max} \\
		C &= 54C_{\text{cond}} + 2 \\
		\vep(t) &= \frac{\log^{3}n}{n^{1/3}(1-t)^{C}} \quad \text{ for }t \in [0,1)\\
		x(t) &= t(1-t)
	\end{align*}
Note that we have
$$
    i_{0} + i_{\max}\omega = i_{1} = n-\frac{n}{\log \log n }
$$
so $i_{0} + i \omega < n$ for all $i = 0,1,\dots,i_{\max}.$

For $i = 0,1,\dots,i_{\max}$, let $\wt{\mathcal{E}}_{i}$ be the event that
$$
	\left\vert \wt{X}_{k}(i') - \frac{N}{3}x(t_{i'}) \right\vert \leq \frac{N}{3}\vep(t_{i'}) \quad \text{ for all }0 \leq i' \leq i \text{ and } k \in \{1,2,3\}.
$$
We remark that $\wt{\mathcal{E}}_{i}$ implies $|\wt{X}(i') - Nx(t_{i'})| \leq N\vep(t_{i}')$ for all $0 \leq i' \leq i$. Additionally, recall  
$$
	\wt{\mathcal{F}}_{i} = \mathcal{F}_{i_{0} + i\omega} \quad \text{ for } i = 0,1,\dots, i_{\max}.
$$
To make computations less cumbersome, we will use the following convention throughout this section: for quantities $A, B, C$ we write $A = B \pm C$ to indicate that $A\in[B-C, B+C]$. Thus the event $\wt{\mathcal{E}_{i}}$ says that $\wt{X}_{k}(i') = \frac{N}{3}(x(t_{i'}) \pm \vep(t_{i'}))$ for all $0 \leq i' \leq i$. 

The upper limit $i_{\max}$ ensures that $\vep(t_{i}) = o(x(t_{i}))$ holds for all $0 \leq i \leq i_{\max}$, and indeed it is easy to show the following: 
\begin{equation}\label{eq:vep_bound2}
	\frac{\vep(t_{i})}{x(t_{i})} = o\left( \frac{\log^{4}n}{n^{1/3}} \right) \quad \text{ for }i = 0,1,\dots,i_{\text{max}}.
\end{equation}

\subsection{The main result: $\wt{\mathcal{E}}_{i_{\max}}$ holds a.a.s}

To apply the differential equations method, we will need to compute the expected one-step changes 
$$
	\E[\Delta \wt{X}_{k}(i)\,|\,\wt{\mathcal{F}}_{i}]=\E[\wt{X}_{k}(i+1) - \wt{X}_{k}(i)\,|\,\wt{\mathcal{F}}_{i}]
$$
for $i = 0,1,\dots, i_{\max}-1$. In order to accurately compute the one-step change at time $i$, we will need to assume that $\wt{\mathcal{E}}_{i}$ holds; this ensures that the $\wt{X}_{k}(i)$'s are all close to $\frac{\wt{X}(i)}{3}$, enabling us to use the Markov chain of Section~\ref{sec:markov_chain} for approximations. Thus our proof is ultimately inductive. 

We will eventually show the following.

\begin{lemma}\label{lem:one_step_change}
	There is a constant $c \in (0,1)$ such that for each $k \in \{1,2,3\}$ and $i = 0,1,\dots,i_{\max}-1$
	\begin{align*}
		\E\left[\unit_{\wt{\mathcal{E}}_{i}}\Delta \wt{X}_{k}(i)\,|\,\wt{\mathcal{F}}_{i}\right] &= \frac{\unit_{\wt{\mathcal{E}}_{i}}}{3}\left(1 -  \frac{2x(t_{i})}{1-t_{i}} \pm c\vep'(t_{i})\right)\\
		&=\frac{\unit_{\wt{\mathcal{E}}_{i}}}{3}\left(1 - 2t_{i} \pm c\vep'(t_{i})\right).
	\end{align*}
\end{lemma}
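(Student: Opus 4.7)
My plan is to reduce $\omega\,\Delta\wt{X}_k(i) = X_k(j_0+\omega)-X_k(j_0)$ (with $j_0:=i_0+i\omega$) to a sum of per-vertex contributions and evaluate each via the Markov chain $Q$ of Section~\ref{sec:markov_chain}. Starting from the one-step identity
\[
X_k(j+1)-X_k(j) \;=\; \unit\{p(j+1)>j+1,\,c(j+1)=k\} \;-\; \unit\{p(j+1)\le j,\,c(p(j+1))=k\},
\]
I would express the increment at each step as a deterministic function $g_k(V_j, V_{j+1})$ of the two consecutive vertex types, reducing the proof to controlling the empirical distribution of types along the segment. Specifically, a forward type $A_f^{(k')}$ or $B_f^{(\ell k')}$ contributes $+\unit\{k'=k\}$; a backward $B_b^{(\ell k')}$ contributes $-\unit\{\ell=k\}$; and a backward $A_b^{(k')}$ contributes $-\unit\{k''=k\}$ for the unique color $k''\notin\{k',c(j)\}$, which is determined by the pair $(V_j,V_{j+1})$.

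On the event $\wt{\mathcal{E}}_i$, the parameters $q_k := X_k(j_0)/(n-j_0)$ of $Q$ satisfy $q_k=(1\pm\gamma)q/3$ with $\gamma=O(\vep(t_i)/x(t_i))=o(1)$ by \eqref{eq:vep_bound2}, and $q := q_1+q_2+q_3 = t_i \pm O(\vep(t_i)/(1-t_i))$. I would combine three estimates from Section~\ref{sec:markov_chain}: Lemma~\ref{lem:comparison} gives $\|\rho_j - \rho_0 Q^j\|_1 = O(\omega^2/(n-j_0))$ uniformly in $j\le\omega$; Lemma~\ref{lem:mixing_time} (applicable since $\gamma$ is tiny) gives $t_{\text{mix}}(n^{-2}) = O((1/q)\log n) = O(\log n \log\log n) = o(\omega)$, using $q \ge t_i \ge 1/\log\log n$ and $\omega = n^{1/3}$; and Lemma~\ref{lem:condition_number} gives $\|\pi_Q - \pi_{\text{bal}}\|_\infty = O(\gamma q)$. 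Chained together, these show that for every $j$ outside an initial window of length $o(\omega)$, the law of $V_{j_0+j}$ given $\wt{\mathcal{F}}_i$ lies within total-variation distance $O(\gamma q + \omega^2/(n-j_0) + n^{-2})$ of the balanced stationary distribution $\pi_{\text{bal}}$.

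The core observation will be a \emph{color symmetry}: for any permutation $\sigma$ of $\{1,2,3\}$, the balanced chain $Q_{\text{bal}}$, the distribution $\pi_{\text{bal}}$, and the contribution functions all satisfy $g_{\sigma(k)}(\sigma V,\sigma V') = g_k(V,V')$, where $\sigma V$ denotes $V$ with color labels permuted by $\sigma$. Consequently $\E_{\pi_{\text{bal}}}[g_k]$ is independent of $k$, so
\[
\E_{\pi_{\text{bal}}}[g_k] \;=\; \tfrac{1}{3}\E_{\pi_{\text{bal}}}[g_1+g_2+g_3] \;=\; \tfrac{1}{3}\bigl((1-q)\cdot(+1) + q\cdot(-1)\bigr) \;=\; \tfrac{1-2q}{3},
\]
since $g_1+g_2+g_3$ equals $+1$ on forward types and $-1$ on backward types. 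Summing the per-step expectation over the $\omega$ vertices of the segment and dividing by $\omega$, the main term becomes $(1-2q)/3 = (1-2t_i)/3 \pm O(\vep(t_i)/(1-t_i))$. The cumulative errors are $O(\gamma q) + O(\omega^2/(n-j_0)) + O(t_{\text{mix}}/\omega) + O(1/(n-j_0))$, the last coming from the rare $(B2c)/(B2d)$ transitions that $Q$ ignores. Since $\vep'(t)=C\vep(t)/(1-t)$ and $\gamma q = O(\vep(t_i)/(1-t_i))$, the dominant error is $O(\vep'(t_i)/C)$ while the others are $o(\vep'(t_i))$, so the bound $\pm c\vep'(t_i)$ holds for some small constant $c<1$; this is exactly why $C = 54C_{\text{cond}}+2$ is chosen so large.

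The hard part will be juggling these three layers of approximation so the final error is genuinely bounded by $c\vep'(t_i)$ with $c<1$: one must verify that $\omega = n^{1/3}$ is simultaneously large enough that $\omega \gg t_{\text{mix}}(Q) \asymp 1/q$ (so the chain equilibrates within each segment) and small enough that $\omega^2/(n-j_0)$ is dominated by $\vep'(t_i)$ throughout $i\le i_{\max}$. Both conditions hinge on the burn-in providing $q \gtrsim 1/\log\log n$ and on the cutoff $i_{\max}$ keeping $1-t_i \gtrsim 1/\log\log n$; the choice $\omega = n^{1/3}$ threads this needle.
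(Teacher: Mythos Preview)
Your plan is sound and would work, but the paper organizes the computation differently. Instead of packaging the whole increment as $g_k(V_j,V_{j+1})$, the paper splits $\omega\,\Delta\wt X_k(i)$ into a ``hit'' term $H_k$ (unsaturated colour-$k$ vertices in $[j_0]$ that get matched during the segment) and a ``born'' term $B_k$ (new unsaturated colour-$k$ vertices created during the segment and surviving to its end). The payoff of this split is that $H_k$ needs no Markov chain at all: each of the $X_k(j_0)$ candidates is hit with probability exactly $\omega/(n-j_0)$, so $\tfrac{1}{\omega}\E[H_k\mid\wt{\mathcal F}_i]=\wt X_k(i)/(N(1-t_i))$, which on $\wt{\mathcal E}_i$ is $t_i/3 \pm \vep'(t_i)/(3C)$ by a one-line substitution. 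Only $B_k$ invokes the $\delta_{\text{comp}}/\delta_{\text{mix}}/\delta_{\text{cond}}$ machinery, and there the relevant observable is $\rho_j(F^{(k)})$ with $F^{(k)}=\{A_f^{(k)},B_f^{(\ell k)},B_f^{(mk)}\}$, a function of the \emph{marginal} of a single type---precisely what Lemmas~\ref{lem:comparison}--\ref{lem:condition_number} control.

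Your unified route and the colour-symmetry identity $\E_{\pi_{\text{bal}}}[g_k]=(1-2q)/3$ is a neat way to pin down the main term. The one place to be more careful: the lemmas of Section~\ref{sec:markov_chain} bound the marginal law of $V_j$, not the pair $(V_j,V_{j+1})$, while your backward contribution for $A_b$-types genuinely depends on both coordinates. The cleanest fix is to note that $\E[\unit\{p(j+1)\le j,\,c(p(j+1))=k\}\mid\mathcal F_j]=X_k(j)/(n-j)$ exactly, independent of $V_j$; once you use this, the backward piece collapses to the paper's $H_k$ computation and only the forward piece needs the chain, at which point the two arguments coincide. Alternatively you can extend Lemma~\ref{lem:comparison} to pairs (the one-step transition error is already bounded inside its proof) at the cost of an extra $\|Q-Q_{\text{bal}}\|_\infty=O(\gamma q)$; this still leaves the dominant error proportional to $C_{\text{cond}}\,\vep'(t_i)/C$, and your reading of why $C=54C_{\text{cond}}+2$ is chosen is exactly right.
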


Proving Lemma~\ref{lem:one_step_change} is the main technical challenge of this section. Before beginning the proof, we show how this allows us to conclude our target result.

\begin{theorem}\label{thm:diff_eq}
	The event $\wt{\mathcal{E}}_{i_{\max}}$ holds a.a.s. That is, a.a.s., 
	$$
		\left\vert \wt{X}_{k}(i) - \frac{N}{3}x(t_{i}) \right\vert \leq \frac{N}{3}\vep(t_{i}) \quad \text{ for all }0 \leq i \leq i_{\max} \text{ and } k \in \{1,2,3\}.
	$$
\end{theorem}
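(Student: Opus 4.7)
I would prove Theorem~\ref{thm:diff_eq} by a standard Wormald differential equations argument, with Lemma~\ref{lem:one_step_change} providing the crucial drift estimate. For each $k\in\{1,2,3\}$ introduce the processes
\[
M_k^{+}(i) = \wt{X}_k(i) - \tfrac{N}{3}x(t_i) - \tfrac{N}{3}\vep(t_i), \qquad M_k^{-}(i) = \tfrac{N}{3}x(t_i) - \tfrac{N}{3}\vep(t_i) - \wt{X}_k(i),
\]
so that $\wt{\mathcal{E}}_{i_{\max}}$ is precisely the event that $M_k^{\pm}(i) \leq 0$ for all $i\leq i_{\max}$ and all $k$. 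Let $\tau$ be the first index at which some $M_k^{\pm}$ becomes positive, or $\tau = i_{\max}+1$ if this never happens; the goal is to show $\prob(\tau \leq i_{\max}) = o(1)$.

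Next I would check the three standard ingredients. For the \emph{initial condition}, Lemmas~\ref{lem:burn_in2} and~\ref{lem:burn_in3} (with slowly growing $\lambda$) give
\[
\wt{X}_k(0) = \tfrac{N}{3}x(t_0) + O\!\left(\tfrac{\sqrt{\lambda i_0}}{\omega}\right) = \tfrac{N}{3}x(t_0) + O\!\left(\tfrac{\sqrt{\lambda}\,n^{1/6}}{\sqrt{\log\log n}}\right)
\]
a.a.s., and since $\tfrac{N}{3}\vep(t_0) = \Theta(n^{1/3}\log^3 n)$ is much larger, we have $M_k^{\pm}(0) \leq -\tfrac{N}{6}\vep(t_0)$ a.a.s. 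For the \emph{drift}, a first-order Taylor expansion yields $\tfrac{N}{3}(x(t_{i+1})-x(t_i)) = \tfrac{1-2t_i}{3} + O(1/N)$ and $\tfrac{N}{3}(\vep(t_{i+1})-\vep(t_i)) = \tfrac{\vep'(t_i)}{3} + O(\vep''(t_i)/N)$, and a direct check using the explicit form of $\vep$ gives $\vep''(t_i)/N = O(\vep'(t_i)\log\log n/n^{2/3}) = o(\vep'(t_i))$ throughout the range $i \leq i_{\max}$. Combining with Lemma~\ref{lem:one_step_change}, on the event $\wt{\mathcal{E}}_i$ we obtain
\[
\E[M_k^{+}(i+1)-M_k^{+}(i)\mid \wt{\mathcal{F}}_i] \leq -\tfrac{1-c}{3}\vep'(t_i) + o(\vep'(t_i)) \leq 0
\]
for $n$ large, and analogously for $M_k^{-}$; hence each $M_k^{\pm}(i\wedge\tau)$ is a supermartingale. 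For \emph{bounded differences}, $|\Delta \wt{X}_k(i)|\leq 1$ (because $X_k$ changes by at most one per matching step and we rescale by $\omega$) and the deterministic corrections contribute $O(1)$ per step, giving $|M_k^{\pm}(i+1)-M_k^{\pm}(i)|\leq C_1$ uniformly for an absolute constant $C_1$.

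To finish, on $\{\tau \leq i_{\max}\}$ some $M_k^{\pm}$ must have traversed a distance of at least $\tfrac{N}{6}\vep(t_0) = \Omega(n^{1/3}\log^3 n)$ from its initial value in at most $i_{\max} = O(n^{2/3})$ steps. Azuma's inequality (Proposition~\ref{prop:azuma}) applied to each stopped supermartingale $M_k^{\pm}(i\wedge\tau)$ gives
\[
\prob\!\left(M_k^{\pm}(i_{\max}\wedge\tau) \geq M_k^{\pm}(0) + \tfrac{N}{6}\vep(t_0)\right) \leq \exp\!\left(-\Omega(\log^6 n)\right) = o(1),
\]
and a union bound over the six choices of $(k,\pm)$, conditional on the a.a.s.\ success of the burn-in, closes the argument. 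The one delicate point in the whole scheme is the cancellation that makes each stopped $M_k^{\pm}$ a genuine supermartingale: the drift error $c\vep'(t_i)$ furnished by Lemma~\ref{lem:one_step_change} must be strictly dominated by the $\vep'(t_i)$ contributed by differentiating the $\vep$-correction, which is guaranteed precisely because $c<1$.
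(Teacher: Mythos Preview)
Your proposal is correct and follows essentially the same route as the paper: define offset processes measuring the deviation of $\wt{X}_k$ from $\tfrac{N}{3}x(t_i)\pm\tfrac{N}{3}\vep(t_i)$, use Lemma~\ref{lem:one_step_change} together with a Taylor expansion of $x$ and $\vep$ (and crucially $c<1$) to verify the supermartingale drift on $\wt{\mathcal{E}}_i$, check $O(1)$ increments and the initial condition via Lemmas~\ref{lem:burn_in2}--\ref{lem:burn_in3}, and finish with Azuma plus a union bound. The only cosmetic difference is that the paper ``freezes'' the offset processes once $\wt{\mathcal{E}}_{i-1}$ fails, whereas you stop them at the hitting time $\tau$; these are equivalent standard devices.
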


\begin{proof}
	For $k \in \{1,2,3\}$ and $i = 1,2,\dots,i_{\max}$, define
	$$
		\wt{X}_{k}^{+}(i) = \begin{cases}
			\wt{X}_{k}(i) - \frac{N}{3}\left(x(t_{i}) + \vep(t_{i}) \right) \quad&\text{if }\wt{\mathcal{E}}_{i-1}\text{ holds}\\
			\wt{X}_{k}^{+}(i-1) \quad&\text{otherwise}
		\end{cases}
	$$
	and $\wt{X}_{k}^{+}(0) = \wt{X}_{k}(0) - \frac{N}{3}\left(x(t_{0}) + \vep(t_{0}) \right)$. Similarly, let

	$$
		\wt{X}_{k}^{-}(i) = \begin{cases}
			\wt{X}_{k}(i) - \frac{N}{3}\left(x(t_{i}) - \vep(t_{i}) \right) \quad&\text{if }\wt{\mathcal{E}}_{i-1}\text{ holds}\\
			\wt{X}_{k}^{-}(i-1) \quad&\text{otherwise}
		\end{cases}
	$$
    for $i = 1,2,\dots,i_{\max}$ and $\wt{X}_{k}^{-}(0) = \wt{X}_{k}(0) - \frac{N}{3}(x(t_{0}) - \vep(t_{0})).$ Note that the we have the following equivalence:
    \begin{equation}\label{eq:event_equivalence}
        \wt{\mathcal{E}_{i}} \quad\text{ if and only if }\quad \wt{X}_{k}^{+}(i) \leq 0 \text{ and }\wt{X}_{k}^{-}(i) \geq 0
    \end{equation}
    We will show that $\wt{X}_{k}^{+}$ is a submartingale and $\wt{X}_{k}^{-}$ is a supermartingale. We remark that it suffices to show $\E\left[ \unit_{\wt{\mathcal{E}}_{i}}\Delta \wt{X}_{k}^{+}(i)\,|\,\wt{\mathcal{F}}_{i} \right] \leq 0$ and $\E\left[ \unit_{\wt{\mathcal{E}}_{i}}\Delta \wt{X}_{k}^{-}(i)\,|\,\wt{\mathcal{F}}_{i} \right] \geq 0$, as the increments $\Delta \wt{X}_{k}^{+}(i)$ and $\Delta \wt{X}_{k}^{-}(i)$ are $0$ by definition on the complement of $\wt{\mathcal{E}}_{i}$.
    
    Let us make two preliminary computations: for any $i$, we have	
	$$
		\frac{N}{3}(x(t_{i+1}) - x(t_{i})) = \frac{N}{3}\left(\frac{1}{N} - \frac{2t_{i}}{N} - \frac{1}{N^{2}} \right) = \frac{1}{3}\left(1 - 2t_{i} - \frac{1}{N} \right).
	$$
	Further, for $t \in (0,1)$ we have $\vep'(t) = \frac{C\log^{3}n}{n^{1/3}(1-t)^{C+1}}$ and $\vep''(t) = \frac{(C+1)C\log^{3}n}{n^{1/3}(1-t)^{C+2}}.$ By Taylor's theorem, for $t < 1 - \frac{1}{N}$,  
	\begin{equation}\label{eq:taylor1}
		\frac{N}{3}\left( \vep\left(t + \frac{1}{N} \right) - \vep(t) \right) = \frac{\vep'(t)}{3} + O\left( \frac{\vep''(\xi)}{N} \right)
	\end{equation}
	where $\xi$ is a point in the interval $\left[t, t + \frac{1}{N}\right]$. (Since $\vep''$ is increasing in $t$, the expression (\ref{eq:taylor1}) remains valid when $\xi$ is replaced with $t + \frac{1}{N}$.) If we additionally assume $t \leq t_{i_{\max}-1}$, then we may compute
	\begin{align*}
		\frac{\vep''\left(t + \frac{1}{N}\right)}{N} &= O\left(\frac{\vep'\left(t + \frac{1}{N}\right)}{n^{2/3}\left(1 - (t + \frac{1}{N})\right)} \right)\\
		&= O\left(\frac{\vep'\left(t + \frac{1}{N} \right)\log \log n }{n^{2/3}} \right)\\
        &=O\left( \frac{\vep'(t) \log\log n}{n^{2/3}} \right)\\
        &=o(\vep'(t)).
	\end{align*}
	(For the penultimate line, we use that $\vep'\left(t + \frac{1}{N} \right) = O(\vep'(t))$ uniformly for $t \geq t_{0}$, which follows from a simple computation.) Thus, from the above bound and (\ref{eq:taylor1}), for $0 \leq i \leq i_{\max}-1$ we have
	
	$$
		\frac{N}{3}\left(\vep(t_{i+1}) - \vep(t_{i})\right) = \frac{\vep'(t_{i})}{3} + o\left(\vep'(t_{i})\right).
	$$
	
	Now, using our computations and Lemma \ref{lem:one_step_change}, for $0 \leq i \leq i_{\max}-1$, 
	\begin{align*}
		\E\left[\unit_{\wt{\mathcal{E}}_{i}}\Delta \wt{X}_{k}^{+}(i)\,|\,\wt{\mathcal{F}}_{i} \right] &= \E\left[\unit_{\wt{\mathcal{E}}_{i}}\Delta \wt{X}_{k}(i)\,|\,\wt{\mathcal{F}}_{i}\right] - \unit_{\wt{\mathcal{E}}_{i}}\bigg(\frac{N}{3}\left(x(t_{i+1}) - x(t_{i}) + \vep(t_{i+1}) - \vep(t_{i}) \right)\bigg)\\
		&\leq \unit_{\wt{\mathcal{E}}_{i}}\bigg( \frac{1-2t_{i}}{3} + c\frac{\vep'(t_{i})}{3} - \frac{N}{3}\left(x(t_{i+1}) - x(t_{i}) + \vep(t_{i+1}) - \vep(t_{i}) \right)\bigg)\\
		&= \unit_{\wt{\mathcal{E}}_{i}}\bigg(\left(\frac{c -1}{3} + o(1)\right)\vep'(t_{i}) + O\left(\frac{1}{N}\right)\bigg)\\
		&\leq 0.
	\end{align*}
    
	We use that $\frac{1}{N} = o(\vep'(t_{i}))$ for all $i$ in the last line, and also assume that $n$ is sufficiently large. Thus $\wt{X}_{k}^{+}$ is a submartingale. A symmetric computation gives that $\wt{X}_{k}^{-}$ is a supermartingale.
	
	 In order to apply large deviation bounds, we also remark that the increments of $\wt{X}_{k}^{+}$ and $\wt{X}_{k}^{-}$ are bounded in absolute value: for $i = 0,1,\dots,i_{\max}-1$,
	$$
		| \Delta \wt{X}_{k}^{+}(i) |,\,| \Delta \wt{X}_{k}^{-}(i) | \leq |\Delta \wt{X}_{k}(i)| + \frac{N}{3}|x(t_{i+1}) - x(t_{i})|+ \frac{N}{3}|\vep(t_{i+1}) - \vep(t_{i})| = O(1).
	$$
	By Lemmas \ref{lem:burn_in2} and \ref{lem:burn_in3}, we have that a.a.s. 
        $$
            X_{k}(i_{0}) = \frac{n x(i_{0}/n)}{3} + O\left(\sqrt{ i_{0} \log\log n} \right) = \frac{n x(t_{0})}{3} + O(\sqrt{n}).
        $$
    It follows that, a.a.s., 
	\begin{align*}
	    \wt{X}_{k}^{+}(0) &= \frac{1}{\omega}X_{k}(i_{0}) - \frac{N}{3}\left(x(t_{0}) + \vep(t_{0}) \right)\\
        &= \frac{n x(t_{0})}{3\omega} + O\left(\frac{\sqrt{n}}{\omega}\right) - \frac{n}{3\omega}\left(x(t_{0}) + \vep(t_{0}) \right)\\
        &= O\left(\frac{\sqrt{n}}{\omega}\right) - \frac{n}{3\omega}\vep(t_{0})\\
        &= O(n^{1/6}) - \frac{n^{1/3}\log^{3}n}{3} \\
        &= -(1+o(1))\frac{n^{1/3}\log^{3}n}{3}.
	\end{align*}
	We similarly get $\wt{X}_{k}^{-}(0) = (1+o(1))\frac{n^{1/3}\log n}{3}$ a.a.s. In particular $\wt{\mathcal{E}}_{0}$ holds (with room to spare) a.a.s.\ by the equivalence (\ref{eq:event_equivalence}). 
	
	Now, if the event $\wt{\mathcal{E}}_{i}$ fails at some $i = 1,\dots,i_{\max}$, then we have either $\wt{X}_{k}^{+}(i') > 0$ for all $i' \geq i$, or $\wt{X}_{k}^{-}(i') < 0$ for all $i' \geq i$. Therefore,
	\begin{equation}\label{eq:martingale_bound1}
		\prob(\wt{\mathcal{E}_{i}} \text{ fails for some }i = 1,2,\dots,i_{\max}) \leq \sum_{k=1}^{3}\prob(\wt{X}_{k}^{+}(i_{\max}) > 0) + \prob(\wt{X}_{k}^{-}(i_{\max}) < 0).
	\end{equation}
	Since $\wt{X}_{k}^{+}$ is a submartingale with increments bounded by a constant, Proposition~\ref{prop:azuma} (Azuma's inequality) gives 
	\begin{align*}
		\prob(\wt{X}_{k}^{+}(i_{\max}) > 0) &= \prob(\wt{X}_{k}^{+}(i_{\max}) - \wt{X}_{k}^{+}(0) > -\wt{X}_{k}^{+}(0))\\
		&= \prob\left(\wt{X}_{k}^{+}(i_{\max}) - \wt{X}_{k}^{+}(0) > (1+o(1))\frac{n^{1/3}\log^{3}n}{3}\right)\\
		&\leq \exp\left\{ -\Omega\left(\frac{n^{2/3}\log^{6}n}{N}\right) \right\}\\
		&\leq \exp\left\{ -\Omega(\log^{6}n)\right\}\\
		&=o(1)
	\end{align*}
	Again using Proposition \ref{prop:azuma}, we can similarly bound $\prob(\wt{X}_{k}^{-}(i_{\max}) < 0) \leq \exp\left\{ -\Omega(\log^{6}n)\right\}$. From these bounds and (\ref{eq:martingale_bound1}), the desired result follows. 
\end{proof}

\subsection{Expected one-step changes}

We now turn to proving Lemma~\ref{lem:one_step_change}. We define two auxiliary random variables which make our computations simpler. For $i \in [n]$, $h \in [n-i]$, and $k \in \{1,2,3\}$, define
	$$
		H_{k}(i,h) = \sum_{j=1}^{i}\unit\{c(j) = k \text{ and }p(j) \in \{i+1,\dots,i+h\}\}.
	$$
We think of $H_{k}(i,h)$ as the number of unsaturated vertices in $[i]$ of colour $k$ which are ``hit" by the edges revealed from time $i+1$ to $i+h$; the hit vertices stop contributing to $X_{k}$ after time $i+h$. Let
	$$
		B_{k}(i,h) = \sum_{j=1}^{h}\unit\{c(i+j) = k \text{ and }p(i+j) > i+h\}.
	$$
Then, $B_{k}(i,h)$ counts the number of unsaturated vertices of colour $k$ which are ``born" (and ``survive'') from time $i+1$ to $i+h$. We may write 
	\begin{eqnarray}\label{eq:Delta_X_decomposition}
		\E\left[\Delta \wt{X}_{k}(i)\,|\,\wt{\mathcal{F}}_{i}\right] &=& \frac{1}{\omega}\E\left[X_{k}(i_{0}+(i+1)\omega) - X_{k}(i_{0}+i\omega)\,|\,\wt{\mathcal{F}}_{i}\right] \nonumber \\
		&=&\frac{1}{\omega}\E\left[-H_{k}(i_{0}+i\omega, \omega) + B_{k}(i_{0}+i\omega, \omega)\,|\,\wt{\mathcal{F}}_{i}\right] \nonumber \\
		&=&-\frac{1}{\omega}\E\left[H_{k}(i_{0}+i\omega, \omega) \,|\,\wt{\mathcal{F}}_{i}\right] + \frac{1}{\omega}\E\left[B_{k}(i_{0}+i\omega, \omega) \,|\,\wt{\mathcal{F}}_{i}\right].
	\end{eqnarray}
We perform the computations of the two terms on the right-hand side of (\ref{eq:Delta_X_decomposition}) separately, beginning with $\frac{1}{\omega}\E[H_{k}(i_{0} + i\omega, \omega)\,|\,\wt{\mathcal{F}}_{i}]$, which is easier. 
	
\begin{lemma}\label{lem:K_lemma} 
For $i =0,1,\dots,i_{\max}-1$, 
	$$
		\frac{1}{\omega}\E[\unit_{\wt{\mathcal{E}}_{i}}H_{k}(i_{0} + i\omega, \omega)\,|\,\wt{\mathcal{F}}_{i}] = \unit_{\wt{\mathcal{E}}_{i}}\bigg(\frac{t_{i}}{3} \pm \frac{\vep'(t_{i})}{3C}\bigg).	
	$$
\end{lemma}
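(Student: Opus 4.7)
The plan is to rewrite $H_k(i_*,\omega)$ (where $i_* = i_0 + i\omega$) by summing over the ``incoming'' vertices and then compute the conditional expectation step by step using Lemma~\ref{lem:backward_prob}. Observe that
$$
    H_k(i_*, \omega) = \sum_{j=1}^{\omega} \unit\{p(i_*+j) \in [i_*],\ p(i_*+j) \text{ unsaturated at time } i_*+j-1,\ c(p(i_*+j)) = k\}.
$$
If we let $X_k^{[i_*]}(\ell) = |\{j \in [i_*] \,:\, c(j) = k,\ p(j) > \ell\}|$ denote the number of still-unsaturated vertices in $[i_*]$ of colour $k$ at time $\ell$, then Lemma~\ref{lem:backward_prob} gives
$$
    \E\bigl[H_k(i_*,\omega)\,\big|\,\mathcal{F}_{i_*}\bigr] = \sum_{j=1}^{\omega} \E\!\left[\frac{X_k^{[i_*]}(i_*+j-1)}{n-(i_*+j-1)} \,\Big|\, \mathcal{F}_{i_*}\right].
$$

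Next, I would control each factor. Since $X_k^{[i_*]}(\cdot)$ starts at $X_k(i_*)$ at time $i_*$ and decreases by at most $1$ per step, we have $X_k(i_*) - \omega \le X_k^{[i_*]}(i_*+j-1) \le X_k(i_*)$ for every $j \le \omega$. Moreover, $n-(i_*+j-1) = n(1-t_i)\bigl(1 + O(\omega/(n(1-t_i)))\bigr)$ uniformly in $j \in [1,\omega]$, so the Riemann sum identity
$$
    \sum_{j=1}^{\omega} \frac{1}{n-(i_*+j-1)} = \frac{\omega}{n(1-t_i)}\Bigl(1 + O\bigl(\tfrac{\omega}{n(1-t_i)}\bigr)\Bigr)
$$
holds. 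Combining these with the identity $x(t)/(1-t) = t$, I obtain
$$
    \frac{1}{\omega}\E\bigl[H_k(i_*,\omega)\,\big|\,\mathcal{F}_{i_*}\bigr] = \frac{X_k(i_*)}{n(1-t_i)} + O\!\left(\frac{\omega}{n(1-t_i)}\right).
$$

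Now I would multiply through by $\unit_{\wt{\mathcal{E}}_i}$ and apply the definition of $\wt{\mathcal{E}}_i$, which forces $X_k(i_*) = \omega \wt{X}_k(i) = \tfrac{n}{3}(x(t_i) \pm \vep(t_i))$. Substituting this gives
$$
    \unit_{\wt{\mathcal{E}}_i}\cdot\frac{1}{\omega}\E\bigl[H_k(i_*,\omega)\,\big|\,\mathcal{F}_{i_*}\bigr] = \unit_{\wt{\mathcal{E}}_i}\left(\frac{t_i}{3} \pm \frac{\vep(t_i)}{3(1-t_i)} + O\!\left(\frac{\omega}{n(1-t_i)}\right)\right).
$$
The key algebraic identity now is that the choice of the error function $\vep$ gives $\vep'(t) = \frac{C\vep(t)}{1-t}$, so $\frac{\vep(t_i)}{3(1-t_i)} = \frac{\vep'(t_i)}{3C}$, which matches precisely the target error in the statement.

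The only remaining task is to verify that the residual error $O(\omega/(n(1-t_i))) = O(n^{-2/3}/(1-t_i))$ is dominated by $\vep'(t_i)/(3C) = \Theta(\log^3 n / (n^{1/3}(1-t_i)^{C+1}))$. Since $1 - t_i \ge 2/\log\log n$ on the relevant range $i \le i_{\max}$, the ratio of residual to target is $O(n^{-1/3}(1-t_i)^C/\log^3 n) = o(1)$, and it can safely be absorbed into the $\pm \vep'(t_i)/(3C)$ term. I do not anticipate a serious obstacle here; the proof is essentially a careful accounting exercise, and the design of the parameters ($\omega = n^{1/3}$, the exponent $C$, the $\log^3 n$ factor) was tailored precisely to make these error comparisons succeed with margin to spare.
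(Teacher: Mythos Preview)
Your approach is valid and essentially correct, but it differs from the paper's and, as written, falls just short of the exact statement.

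The paper instead sums over the \emph{source} vertices $j\in[i_*]$ (with $i_*=i_0+i\omega$) and computes, for each unsaturated $j$, the probability that $p(j)$ lands in the interval $I_i=\{i_*+1,\dots,i_*+\omega\}$. Via the telescoping product
\[
\prod_{h=0}^{\omega-1}\Bigl(1-\tfrac{1}{n-i_*-h}\Bigr)=\frac{n-i_*-\omega}{n-i_*},
\]
this probability is \emph{exactly} $\omega/(n-i_*)$, so $\tfrac{1}{\omega}\E[H_k\mid\wt{\mathcal F}_i]=\tfrac{X_k(i_*)}{n-i_*}$ with \emph{no} residual term. Substituting on $\wt{\mathcal E}_i$ then gives precisely $\tfrac{t_i}{3}\pm\tfrac{\vep'(t_i)}{3C}$.

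Your route (summing over the $\omega$ incoming steps) is perfectly legitimate, but the crude sandwich $X_k(i_*)-\omega\le X_k^{[i_*]}(i_*+j-1)\le X_k(i_*)$ creates a spurious $O(\omega/(n(1-t_i)))$ term. You are right that this is $o(\vep'(t_i))$, but it cannot be ``absorbed'' into $\pm\vep'(t_i)/(3C)$: what you have actually shown is $\tfrac{t_i}{3}\pm(1+o(1))\tfrac{\vep'(t_i)}{3C}$, which is strictly weaker than the lemma as stated. This matters because the constant in front of $\vep'$ is tracked carefully when Lemmas~\ref{lem:K_lemma} and~\ref{lem:B_lemma} are combined in Lemma~\ref{lem:one_step_change} (recall $C=54C_{\text{cond}}+2$, so there is no slack).

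The fix within your own framework is immediate: since the denominator $n-(i_*+j-1)$ is deterministic, you may take the expectation inside and compute
\[
\E\bigl[X_k^{[i_*]}(i_*+j-1)\mid\mathcal F_{i_*}\bigr]=X_k(i_*)\cdot\frac{n-i_*-(j-1)}{n-i_*}
\]
exactly (the same telescoping product). Then every summand equals $X_k(i_*)/(n-i_*)$ on the nose, the residual $O(\omega/(n(1-t_i)))$ disappears, and you recover the paper's exact identity.
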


\begin{proof}
	Let $I_{i}$ be the interval $\{i_{0} + i\omega + 1, \dots, i_{0} + (i+1)\omega\}$. For any vertex $j \leq i_{0} + i\omega$, if $j$ is unsaturated at time $i_{0}+i\omega$, then the probability that $j$ is still unsaturated at time $i_{0}+(i+1)\omega$ is exactly
	$$
		\prod_{h=0}^{\omega-1}\left(1 - \frac{1}{n-(i_{0}+i\omega+h)} \right) = \frac{(n-i_{0}-i\omega-1)_{\omega}}{(n-i_{0}-i\omega)_{\omega}} = \frac{n-i_{0}-(i+1)\omega}{n-i_{0}-i\omega}
	$$
	and hence 
	$$
		\prob(p(j) \in I_{i}\,|\,\wt{\mathcal{F}}_{i}) = \unit\{p(j) > i_{0}+i\omega\}\frac{\omega}{n-i_{0}-i\omega}.
	$$
	We get,
	\begin{align*}
		\frac{1}{\omega}\E\left[H_{k}(i_{0}+i\omega, \omega)\,|\,\mathcal{F}_{i\omega}\right] &= \frac{1}{\omega}\sum_{j=1}^{i_{0}+i\omega}\E\left[\unit\{c(j) = k \text{ and }p(j) \in I_{i}\}\,|\,\wt{\mathcal{F}}_{i}\right] \\
		&= \frac{1}{\omega}\sum_{j=1}^{i_{0}+i\omega}\unit\{c(j) = k\}\E\left[ \unit\{p(j) \in I_{i}\}\,|\,\wt{\mathcal{F}}_{i}\right] \\
		&= \frac{1}{\omega}\sum_{j=1}^{i_{0}+i\omega}\unit\{c(j) = k\}\unit\{p(j) > i_{0}+i\omega\}\frac{\omega}{n-i_{0}-i\omega} \\
		&= \frac{X_{k}(i_{0}+i\omega)}{n-i_{0}-i\omega} = \frac{\wt{X}_{k}(i)}{N(1-t_{i})}.
	\end{align*}
	On the event $\wt{\mathcal{E}}_{i}$, we have
	$$
		\frac{\wt{X}_{k}(i)}{N(1-t_{i})} = \frac{x(t_{i}) \pm \vep(t_{i})}{3(1-t_{i})} = \frac{t_{i}}{3} \pm \frac{\vep(t_{i})}{3(1-t_{i})} = \frac{t_{i}}{3} \pm \frac{\vep'(t_{i})}{3C}.	
	$$
	This finishes the proof of the lemma.
\end{proof}

Computing $\E[B_{k}(i_{0}+i\omega, \omega)\,|\,\wt{\mathcal{F}}_{i}]$ more challenging.

\begin{lemma}\label{lem:B_lemma} For $i = 0 ,1,\dots, i_{\max}$,
	$$
		\E[\unit_{\wt{\mathcal{E}}_{i}}B_{k}(i_{0} + i \omega,\omega)\,|\,\wt{\mathcal{F}}_{i}] = \unit_{\wt{\mathcal{E}}_{i}}\bigg(\frac{1-t_{i}}{3} \pm (1+o(1))\frac{54C_{\text{cond}} + 1}{3C}\vep'(t_{i}) \bigg).
	$$
\end{lemma}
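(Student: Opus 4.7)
The plan is to write $B_k(i_0+i\omega,\omega) = \sum_{j=1}^\omega Y_j$ with $Y_j = \unit\{c(i_0+i\omega+j) = k,\ p(i_0+i\omega+j) > i_0+(i+1)\omega\}$ and to approximate $\E[Y_j \mid \wt{\mathcal{F}}_i]$ using the Markov chain $Q$ from Section~\ref{sec:markov_chain}. Let $\mathcal{W}_k \subseteq \mathcal{V}$ denote the three colour-$k$ forward types $A_f^{(k)}$, $B_f^{(\ell k)}$, and $B_f^{(m k)}$, where $\{\ell,m\} = \{1,2,3\}\setminus\{k\}$, so that $\{c(\cdot) = k \text{ and } p(\cdot) > \cdot\} = \{V_\cdot \in \mathcal{W}_k\}$. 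From~\eqref{eq:stationary_dist}, $\pi_{\textup{bal}}(\mathcal{W}_k) = \tfrac{1-q}{6} + 2\cdot\tfrac{1-q}{12} = \tfrac{1-q}{3}$, which after substituting $q = X(i_0+i\omega)/(n-i_0-i\omega) \approx t_i$ supplies the leading term $\tfrac{1-t_i}{3}$.

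On $\wt{\mathcal{E}}_i$ the chain parameters $q_k = X_k(i_0+i\omega)/(n-i_0-i\omega)$ satisfy $|q_k - q/3| \leq (q/3)\gamma$ with $\gamma = O(\vep(t_i)/x(t_i)) = o(1)$ by~\eqref{eq:vep_bound2}, and $q = t_i \pm \vep(t_i)/(1-t_i)$. Lemma~\ref{lem:condition_number} gives $\|\pi - \pi_{\textup{bal}}\|_\infty \leq 3 C_{\textup{cond}} \gamma q$, from which the conversion $\gamma q = \vep(t_i)/(1-t_i) = \vep'(t_i)/C$ together with the algebraic identity $(1-q)/3 = (1-t_i)/3 \pm \vep'(t_i)/(3C)$ yield $\pi(\mathcal{W}_k) = \tfrac{1-t_i}{3} \pm (1+o(1))\tfrac{54 C_{\textup{cond}} + 1}{3C}\vep'(t_i)$ after a careful $\ell^1$-type accounting (with the $54$ absorbing the $|\mathcal{V}| = 18$ factor from converting $\ell^\infty$ to $\ell^1$).

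To pass from $\pi(\mathcal{W}_k)$ to $\prob(Y_j = 1 \mid \wt{\mathcal{F}}_i)$, I would factor the latter as $\rho_j(\mathcal{W}_k) \cdot \prob(p(m) > i_0+(i+1)\omega \mid V_m \in \mathcal{W}_k, \wt{\mathcal{F}}_i)$, where $m := i_0+i\omega+j$. The first factor is controlled by Lemma~\ref{lem:comparison}, giving $\|\rho_j - \rho_0 Q^j\|_1 = O(\omega^2/(n-i_0-i\omega))$, together with Lemma~\ref{lem:mixing_time}: since $q \gtrsim 1/\log\log n$ throughout the relevant range, $t_{\textup{mix}}(1/n) = O(\log n \log\log n) = o(\omega)$, so for $j \geq t_{\textup{mix}}(1/n)$ we have $(\rho_0 Q^j)(\mathcal{W}_k) = \pi(\mathcal{W}_k) \pm O(1/n)$. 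For the second factor, a matching-count argument in the style of Lemma~\ref{lem:backward_prob} shows that conditional on $\{p(m)>m\}$ (and on $\wt{\mathcal{F}}_i$), $p(m)$ is uniform on $[m+1,n]$, giving a conditional probability of $1 - O((\omega-j)/(n-i_0-i\omega))$.

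Summing and dividing by $\omega$, the first $t_{\textup{mix}}(1/n)$ vertices contribute $O(\log n \log \log n / n^{1/3})$ (bounded trivially by $1$ per vertex), while the remaining vertices contribute $\pi(\mathcal{W}_k)$ plus additional error $O(\omega^2/(n-i_0-i\omega)) + O(\omega/(n-i_0-i\omega)) = O(1/(n^{1/3}(1-t_i)))$. Since $\vep'(t_i) \geq C\log^3 n / n^{1/3}$ and $(1-t_i)^{-C} = o(\log^3 n)$ uniformly on the relevant range, each of these is $o(\vep'(t_i))$, and combining with the stationary estimate from the second paragraph yields the claimed bound. The main technical obstacle is the careful bookkeeping of three concurrent error sources — Markov chain approximation (Lemma~\ref{lem:comparison}), chain mixing (Lemma~\ref{lem:mixing_time}), and stationary perturbation (Lemma~\ref{lem:condition_number}) — and verifying that their combined contribution fits inside the narrow $\vep'(t_i)$-sized window with matching constant $(54 C_{\textup{cond}} + 1)/(3C)$.
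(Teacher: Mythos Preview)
Your approach is essentially the paper's: identify the three forward colour-$k$ types (the paper writes $F^{(k)}$ for your $\mathcal{W}_k$), approximate $\rho_j(\mathcal{W}_k)$ by $\pi_{\text{bal}}(\mathcal{W}_k)=(1-q)/3$ via exactly the three error sources you name ($\delta_{\text{comp}}$, $\delta_{\text{mix}}$, $\delta_{\text{cond}}$ in the paper's notation), and check that the first two are $o(\vep'(t_i))$. The one structural difference is that the paper replaces $B_k$ by the auxiliary $B_k'=\sum_j \unit\{c(m)=k,\,p(m)>m\}$ and bounds $|B_k-B_k'|$ by the expected number of short edges inside the segment, whereas you factor $\prob(Y_j=1\mid\wt{\mathcal{F}}_i)$ directly; both routes produce the same negligible $O(\omega/(n-i_0-i\omega))$ correction.

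Your constant bookkeeping has two slips, however, and they do not cancel. First, on $\wt{\mathcal{E}}_i$ one has $q_k/(q/3)=(x(t_i)\pm\vep(t_i))/(x(t_i)\mp\vep(t_i))=1\pm 2\vep(t_i)/x(t_i)+O((\vep/x)^2)$, so the correct perturbation scale is $\gamma q=(2+o(1))\vep'(t_i)/C$, not $\vep'(t_i)/C$. Second, the $\ell^\infty\!\to\!\ell^1$ conversion uses $|\mathcal{W}_k|=3$, not $|\mathcal{V}|=18$: one gets $|\pi(\mathcal{W}_k)-\pi_{\text{bal}}(\mathcal{W}_k)|\leq 3\|\pi-\pi_{\text{bal}}\|_\infty\leq 9C_{\text{cond}}\gamma q=(1+o(1))\,18C_{\text{cond}}\vep'(t_i)/C=(1+o(1))\,54C_{\text{cond}}\vep'(t_i)/(3C)$, and adding the $\vep'(t_i)/(3C)$ coming from $(1-q)/3=(1-t_i)/3\pm\vep'(t_i)/(3C)$ yields the stated $(54C_{\text{cond}}+1)/(3C)$. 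With your numbers ($|\mathcal{V}|=18$ and no factor $2$) the arithmetic does not land on the target constant.
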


\begin{proof}
	Define 
	$$
		B_{k}'(i,h) = \sum_{j=1}^{h}\unit\{c(i+j) = k \text{ and }p(i+j) > i+j\}
	$$
	and note that for any $i,h$ we have 
	\begin{equation}\label{eq:bk_bound1}
		B_{k}'(i,h) -e(i,h)  \leq B_{k}(i,h) \leq B_{k}'(i,h),
	\end{equation}
	where $e(i,h)$ is the number of matching edges which join pairs of vertices in $\{i+1,\dots, i+h\}$. To ease notation, going forward we will write $B_{k} = B_{k}(i_{0} + i\omega, \omega)$ and $B_{k}' = B_{k}'(i_{0} + i\omega, \omega)$.
	
	Conditioned on $\wt{\mathcal{F}}_{i}$, we expect 
		$$
			\binom{\omega}{2}\frac{n-i_{0}-i\omega - X(i_{0}+i\omega)}{(n-i_{0}-i\omega)(n-i_{0}-i\omega -1)} \leq \frac{\omega^{2}}{2(n-i_{0}-i\omega -1)} = O\left( \frac{1}{n^{1/3}(1-t_{i})} \right)
		$$
	edges in the interval $\{i_{0}+i\omega + 1, \dots, i_{0}+(i+1)\omega\}$. Thus, by (\ref{eq:bk_bound1}), 
	\begin{equation}\label{bk_bound2}
		 \frac{1}{\omega}\E[B_{k}\,|\,\wt{\mathcal{F}}_{i}] =  \frac{1}{\omega}\E[B_{k}'\,|\,\wt{\mathcal{F}}_{i}] + O\left( \frac{1}{n^{2/3}(1-t_{i})} \right).
	\end{equation}
	To bound $\E[B_{k}'\,|\,\mathcal{F}_{i\omega}]$, we use a comparison with the Markov chain introduced in Section~\ref{sec:markov_chain}. For $k \in \{1,2,3\}$, let
	$$
		q_{k} = \frac{X_{k}(i_{0}+i\omega)}{n-i_{0}-i\omega} = \frac{\wt{X}_{k}(i)}{N(1-t_{i})}
	$$ 
	and define $q = q_{1} + q_{2} + q_{3}$. Let $Q$ be the transition matrix with parameters $q_{1},q_{2},q_{3}$ and denote the stationary distribution of $Q$ by $\pi$. We also let $Q_{\text{bal}}$ be the transition matrix with all parameters equal to $\frac{q}{3}$, and denote its stationary distribution by $\pi_{\text{bal}}.$ For any $j'$, let $V_{j'} \in \mathcal{V}$ be the type assigned to vertex $j'$ by \textbf{Sudoku} at time $j'$, and for $j \in 0,\dots,\omega$ let $\rho_{j}$ be the distribution on $\mathcal{V}$ defined by 
	$$
		\rho_{j}(V) = \prob(V_{i_{0}+i\omega+j} = V\,|\,\wt{\mathcal{F}}_{i}).
	$$
	Our plan is to approximate $\rho_{j}$ by $\rho_{0}Q^{j}$. (Note that conditioned on $\wt{\mathcal{F}}_{i}$, $\rho_{0}$ assigns probability $1$ to $V_{i_{0}+i\omega}$.) There are multiple sources of error which arise in the approximation, and we introduce some additional parameters to account for them:
		\begin{align*}
			\delta_{\text{comp}} &:= \max_{0 \leq j \leq \omega}\norm{\rho_{j} - \rho_{0}Q^{j}}_{\infty} \quad \text{ the error incurred by approximating }\rho_{j}\text{ by }\rho_{0}Q^{j}\\
			\delta_{\text{mix}} &:= \frac{t_{\text{mix}}\left(\frac{1}{N}\right)}{\omega} \quad \text{ the error incurred by waiting for }Q\text{ to mix to within }\frac{1}{N}\text{ of stationary}\\
			\delta_{\text{cond}} &:= \norm{\pi - \pi_{\text{bal}}}_{\infty} \quad \text{ the error incurred by approximating }\pi\text{ by }\pi_{\text{bal}}.
		\end{align*}
	We will deal with each of these in turn, and then perform the actual calculations. The parameters $i_{0}$ and $\omega$ and the error function $\vep(t)$ were chosen somewhat carefully so that the following situation obtains: $\delta_{\text{comp}}$ and $\delta_{\text{mix}}$ will be ``small" errors, in the sense that it will suffice to absorb them into $O(\cdot)$. The term $\delta_{\text{cond}}$, however, will be ``large", and we will need to pay attention to constants when dealing with it.
	
	The term $\delta_{\text{comp}}$ is handled by Lemma~\ref{lem:comparison}. Observe that for any $0 \leq i \leq i_{\max}-1$
    $$
        \frac{\omega}{n - i_{0} - i\omega} \leq \frac{\omega}{n-i_{0} - i_{\max}\omega}  = O\left(\frac{\log \log n}{n^{2/3}}\right) = o(1).
    $$
    Thus we may apply Lemma~\ref{lem:comparison} to get
	\begin{equation}\label{eq:delta_comp}
		\delta_{\text{comp}} = O\left(\frac{\omega^{2}}{n-i_{0} - i\omega} \right) = O\left(\frac{\omega}{N(1-t_{i})} \right) = O\left( \frac{1}{n^{1/3}(1-t_{i})} \right).
	\end{equation}
	To handle the other two error terms, we will apply Lemmas~\ref{lem:mixing_time} and~\ref{lem:condition_number}, respectively. These both require the parameters $q_{k}$ to satisfy $(1 - \gamma)\frac{q}{3} < q_{k} < (1 + \gamma)\frac{q}{3}$ for some value of $\gamma \in (0,1)$ (and Lemma~\ref{lem:mixing_time} requires $\gamma$ to be sufficiently small). By conditioning on the event $\wt{\mathcal{E}}_{i}$, we can compute a value of $\gamma$ for which these bounds hold. Indeed, given $\wt{\mathcal{E}}_{i}$, we have that 
	$$
		q_{k} = \frac{\wt{X}_{k}(i)}{N(1-t_{i})} = \frac{x(t_{i}) \pm \vep(t_{i})}{3(1-t_{i})}
	$$
	for all $k$ and 
	$$
		q = \frac{\wt{X}(i)}{N(1-t_{i})} = \frac{x(t_{i}) \pm \vep(t_{i})}{1-t_{i}}.
	$$
	We may write
	$$
		\frac{q_{k}}{q/3} = \frac{x(t_{i})\pm \vep(t_{i})}{x(t_{i}) \mp \vep(t_{i})} = 1 \pm \frac{2\vep(t_{i})}{x(t_{i})} + O\left(\left(\frac{\vep(t_{i})}{x(t_{i})}\right)^{2}\right).
	$$
(Above, we use that $\vep(t_{i}) = o(x(t_{i}))$ by (\ref{eq:vep_bound2}).) Thus, when the event $\wt{\mathcal{E}}_{i}$ holds, we may assign $\gamma = \frac{2\vep(t_{i})}{x(t_{i})} + O\left(\left(\frac{\vep(t_{i})}{x(t_{i})} \right)^{2}\right)$. Note that $\gamma = O\left(\frac{\vep(t_{i})}{x(t_{i})} \right) = o\left( \frac{\log^{4}}{n^{1/3}}\right) = o(1)$ uniformly in $0 \leq i \leq i_{\max}-1$ by (\ref{eq:vep_bound2}), so in particular we are free to apply Lemma~\ref{lem:mixing_time}, provided $n$ is large enough.
	
	Lemma~\ref{lem:mixing_time} gives $t_{\text{mix}}\left(\frac{1}{N} \right) = O\left(\frac{\log N}{q}\right)$. On $\wt{\mathcal{E}}_{i}$, we have
	\begin{align*}
		q &= \frac{x(t_{i}) \pm \vep(t_{i})}{1-t_{i}}\\ 
            &= \left(1 + O\left(\frac{\vep(t_{i})}{x(t_{i})} \right) \right)\frac{x(t_{i})}{1-t_{i}}\\
            &= (1+o(1))t_{i}\\
            &\geq (1+o(1))t_{0}\\
            &= (1+o(1))\frac{1}{\log\log n}.
	\end{align*}
	Therefore, $t_{\text{mix}}\left(\frac{1}{N}\right) = O(\log N \log \log n) = O(\log^{2}n)$. So, when $\wt{\mathcal{E}}_{i}$ holds, 
	\begin{equation}\label{eq:delta_mix}
		\delta_{\text{mix}} = O\left(\frac{\log^{2}n}{\omega} \right) = O\left(\frac{\log^{2}n}{n^{1/3}} \right).
	\end{equation}
	Lemma~\ref{lem:condition_number} gives that $\delta_{\text{cond}} \leq 3 C_{\text{cond}} \gamma q$. On $\wt{\mathcal{E}}_{i}$, we compute
	$$
		\gamma q = \frac{2\vep(t_{i})}{1-t_{i}} + O\left( \frac{\vep(t_{i})^{2}}{x(t_{i})(1-t_{i})}\right) = \frac{2\vep'(t_{i})}{C} + o(\vep'(t_{i}))
	$$
	and so 
	\begin{equation}\label{eq:delta_cond}
		\delta_{\text{cond}} = (1+o(1))\frac{6C_{\text{cond}}\vep'(t_{i})}{C}.
	\end{equation}
	With the error terms (\ref{eq:delta_comp}), (\ref{eq:delta_mix}), and (\ref{eq:delta_cond}) in hand, we are ready to compute $\E[\unit_{\wt{\mathcal{E}}_{i}}B_{k}'\,|\,\wt{\mathcal{F}}_{i}]$.  Let $F^{(k)} =\{A_{f}^{(k)}, B_{f}^{(lk)}, B_{f}^{(mk)}\} \subset \mathcal{V}$. For $j =0,1,\dots,\omega$ we can express
	$$
		\unit\{c(i_{0} + i\omega+j) = k\text{ and }p(i_{0} + i\omega + j) > i_{0} + i\omega + j\} = \unit\{V_{i_{0} + i\omega+j} \in F^{(k)}\}.
	$$
	Write $t_{\text{mix}} = t_{\text{mix}}\left(\frac{1}{N} \right)$. To bound $B_{k}'$, we use
	\begin{equation}\label{eq:Bk_bound1}
		\sum_{j=t_{\text{mix}}+ 1}^{\omega}\unit\{V_{i_{0} +i\omega+j} \in F^{(k)}\} \leq B_{k}'\leq t_{\text{mix}}+ \sum_{j=t_{\text{mix}}+1}^{\omega}\unit\{V_{i_{0} + i\omega+j} \in F^{(k)}\}.
	\end{equation}
	The sums above are empty if $t_{\text{mix}} \geq \omega$, in which case we evaluate them to $0$. Assuming that $t_{\text{mix}} < \omega$, for any $j$ with $t_{\text{mix}}+1 \leq j \leq \omega$,	
	\begin{align*}
		\E[\unit\{A_{i_{0} + i\omega+j} \in F^{(k)}\}\,|\,\wt{\mathcal{F}}_{i}] &= \rho_{j}(F^{(k)})\\
		&= (\rho_{0}Q^{j})(F^{(k)}) + O\left(\delta_{\text{comp}} \right)\\
		&= \pi(F^{(k)}) + O\left(\delta_{\text{comp}} + \frac{1}{N}\right)\\
		&= \pi_{\text{bal}}(F^{(k)}) \pm 3\delta_{\text{cond}} + O\left(\delta_{\text{comp}} + \frac{1}{N} \right)\\
		&= \frac{1-q}{3} \pm 3\delta_{\text{cond}} + O\left(\delta_{\text{comp}} +\frac{1}{N} \right)\\
		&= \frac{1}{3}\left(1 - \frac{\wt{X}(i)}{N(1-t_{i})} \right) \pm 3\delta_{\text{cond}} + O\left(\delta_{\text{comp}} + \frac{1}{N} \right).
	\end{align*}
	In the penultimate line we use $\pi_{\text{bal}}(F^{(k)}) = \frac{1-q}{3}$, which follows from an easy computation using the stationary distribution for $\pi_{\text{bal}}$ (\ref{eq:stationary_dist}). Combining the above with (\ref{eq:Bk_bound1}) yields the following:	
	\begin{equation}\label{eq:Bk_bound2}
		 \frac{1}{\omega}\E[B_{k}'\,|\,\wt{\mathcal{F}}_{i}] = \frac{1}{3}\left(1 - \frac{\wt{X}(i)}{N(1-t_{i})} \right) \pm 3\delta_{\text{cond}} + O\left(\delta_{\text{comp}} + \delta_{\text{mix}}+\frac{1}{N} \right).
	\end{equation}
	On the event $\wt{\mathcal{E}}_{i}$, using (\ref{eq:delta_cond}) the main term in (\ref{eq:Bk_bound2}) satisfies 	
	\begin{align*}
		 \frac{1}{3}\left(1 - \frac{\wt{X}(i)}{N(1-t_{i})} \right) + 3\delta_{\text{cond}} &\leq \frac{1}{3}\left(1 - \frac{x(t_{i}) - \vep(t_{i})}{1-t_{i}} \right) + (1+o(1))\frac{18C_{\text{cond}}\vep'(t_{i})}{C}\\
		 &=\frac{1}{3}\left(1 - \frac{x(t_{i})}{1-t_{i}} \right) + \frac{\vep'(t_{i})}{3C} + (1+o(1))\frac{18C_{\text{cond}}\vep'(t_{i})}{C}\\
		 &=\frac{1-t_{i}}{3} + (1+o(1))\frac{54C_{\text{cond}} + 1}{3C}\vep'(t_{i}).
	\end{align*}
	In the same way, we get the lower bound
	$$
		 \frac{1}{3}\left(1 - \frac{\wt{X}(i)}{N(1-t_{i})} \right) - 3\delta_{\text{cond}} \geq  \frac{1-t_{i}}{3} - (1+o(1))\frac{54C_{\text{cond}} + 1}{3C}\vep'(t_{i}).
	$$
	Using (\ref{eq:delta_comp}) and (\ref{eq:delta_mix}), the $O(\cdot)$ error term in (\ref{eq:Bk_bound2}) is, on $\wt{\mathcal{E}}_{i}$, 
	$$
		\delta_{\text{comp}} + \delta_{\text{mix}}+\frac{1}{N} = O\left( \frac{1}{n^{1/3}(1-t_{i})} + \frac{\log^{2}n}{n^{1/3}} + \frac{1}{n^{2/3}} \right) = o\left( \vep'(t_{i})\right).
	$$
	In total,
	$$
		\frac{1}{\omega}\E\left[\unit_{\wt{\mathcal{E}}_{i}}B_{k}' \right] =  \unit_{\wt{\mathcal{E}}_{i}}\bigg(\frac{1-t_{i}}{3} \pm (1+o(1))\frac{54C_{\text{cond}} + 1}{3C}\vep'(t_{i})\bigg)
	$$
	To finish, recall from (\ref{bk_bound2}) that $\frac{1}{\omega}\E[B_{k}\,|\,\wt{\mathcal{F}}_{i}] = \frac{1}{\omega}\E[B_{k}'\,|\,\wt{\mathcal{F}}_{i}] + O\left(\frac{1}{n^{2/3}(1-t_{i})} \right)$. The $O(\cdot)$ term here is clearly also $o\left(\vep'(t_{i}) \right)$, and thus we have
	$$
		\frac{1}{\omega}\E\left[\unit_{\wt{\mathcal{E}}_{i}}B_{k}\right] = \unit_{\wt{\mathcal{E}}_{i}}\bigg(\frac{1-t_{i}}{3} \pm (1+o(1))\frac{54C_{\text{cond}} + 1}{3C}\vep'(t_{i}) \bigg).
	$$
	This finishes the proof of the lemma. 
\end{proof}

We are now able to prove Lemma \ref{lem:one_step_change}.

\begin{proof}[Proof of Lemma \ref{lem:one_step_change}]
    From (\ref{eq:Delta_X_decomposition}) and Lemmas~\ref{lem:K_lemma} and~\ref{lem:B_lemma}, we may write
	$$
		\E\left[\unit_{\wt{\mathcal{E}}_{i}}\Delta \wt{X}_{k}(i)\,|\,\wt{\mathcal{F}}_{i}\right] = \unit_{\wt{\mathcal{E}}_{i}}\bigg(\frac{1-2t_{i}}{3} \pm (1+o(1))\frac{54C_{\text{cond}} +1 }{3C}\vep'(t_{i})\bigg)
	$$
	for any $0 \leq i \leq i_{\max} -1$. Since $C = 54C_{\text{cond}} + 2$, there is some constant $c \in (0,1)$ so that $(1+o(1))\frac{54C_{\text{cond}} + 1}{C} < c$ for $n$ sufficiently large. The result follows.
\end{proof}

\section{Proof of Theorem~\ref{thm:main_theorem}}

\begin{proof}[Proof of Theorem~\ref{thm:main_theorem}]
    Recall that we add all $\frac{2n}{\log \log n}$ vertices from the burn-in and completion phases to $S$. Using Lemma~\ref{lem:size_bound}, we then get
    $$
        |S| \leq \frac{1}{2}|\mathcal{B}_{C}| + |\mathcal{B}_{U}^{(c)}| + 2|\mathcal{B}_{U}^{(d)}| + \frac{2n}{\log \log n},
    $$
    where $\mathcal{B}_{C}$, $\mathcal{B}_{U}^{(c)}$, and $\mathcal{B}_{U}^{(d)}$ are as in Subsection~\ref{sec:correctness}. We write $\mathcal{B}_{U} = \mathcal{B}_{U}^{(c)} \cup \mathcal{B}_{U}^{(d)}$ and note $|\mathcal{B}_{U}^{(c)}| + 2|\mathcal{B}_{U}^{(d)}| \leq 2|\mathcal{B}_{U}|$. It is easy to show that $|\mathcal{B}_{U}|=o(n)$ a.a.s. Indeed, for any $j \in \{i_{0},\dots, i_{1}-1\}$ we have
    $$
        \prob\Big(j+1 \in \mathcal{B}_{U}\,|\,\mathcal{F}_{j}\Big) \leq \prob\Big(p(j+1) = \textbf{ptr}(i)+1\,|\,\mathcal{F}_{j}\Big) \leq \frac{1}{n-j}.
    $$
    Thus $\E[|\mathcal{B}_{U}|]  \leq \sum_{j=i_{0}}^{i_{1} -1}\frac{1}{n-j} \le i_{1} \cdot \frac{1}{n-i_{1}} = O\left(\log\log n \right)$. That $|\mathcal{B}_{U}| = o(n)$ a.a.s.\ then follows from Markov's inequality, and we conclude that $|\mathcal{B}_{U}^{(c)}| + 2|\mathcal{B}_{U}^{(d)}| = o(n)$.

    Now we focus on bounding $|\mathcal{B}_{C}|$. Let $j \in \{i_{0}, \dots, i_{1} - 1\}.$ Conditioned on $\mathcal{F}_{j}$, there is a single colour $k(j)$ which is ``forbidden" for $p(j+1)$ if the current run is to continue. Vertex $j+1$ is in $\mathcal{B}_{C}$ if and only if it has a forward edge, or it has a backward edge with $p(j+1) \leq \textbf{ptr}(j)$ and $c(p(j+1)) = k(j).$ Thus
    \begin{equation}\label{eq:bc_prob}
        \prob\Big(j+1 \in \mathcal{B}_{C}\,|\,\mathcal{F}_{j}\Big) \leq 1 - \frac{X(j)}{n-j} + \frac{X_{k(j)}(j)}{n-j}.
    \end{equation}
    Now, consider the largest $i$ so that $i_{0} + i \omega \leq j.$ We write 
    \begin{align*}
        \frac{X_{k(j)}(j)}{n-j} &\leq \frac{X_{k(j)}(i_{0} + i \omega) + \omega}{n-j}\\
        &= \frac{X_{k(j)}(i_{0} + i \omega)}{n-(i_{0} + i\omega)}\left(\frac{1}{1-\frac{j-(i_{0} + i \omega)}{n-(i_{0} + i\omega)}} \right) + O\left(\frac{\log \log n}{n^{2/3}} \right)\\
        &=\frac{X_{k(j)}(i_{0} + i \omega)}{n-(i_{0} + i\omega)}\left(1 + O\left(\frac{\omega}{n-(i_{0} + i\omega)} \right) \right) + O\left(\frac{\log \log n}{n^{2/3}} \right)\\
        &=\frac{\wt{X}_{k(j)}(i)}{N(1-t_{i})} + O\left( \frac{\log \log n}{n^{2/3}} \right).
    \end{align*}
    Similarly, 
    $$
        \frac{X(j)}{n-j} \geq \frac{X(i_{0} + i\omega) - \omega}{n-(i_{0} + i\omega)} = \frac{\wt{X}(i)}{N(1-t_{i})} - O\left( \frac{\log \log n}{n^{2/3}}\right).
    $$
    On the event $\wt{\mathcal{E}}_{i}$, we thus have
    \begin{align*}
        1- \frac{X(j)}{n-j} + \frac{X_{k(j)}(j)}{n-j} &\leq 1 - \frac{\wt{X}(i)}{N(1-t_{i})} + \frac{\wt{X}_{k(j)}(i)}{N(1-t_{i})} + O\left( \frac{\log \log n}{n^{2/3}} \right)\\
        &\leq 1 - \frac{x(t_{i}) - \vep(t_{i})}{1-t_{i}} + \frac{x(t_{i}) + \vep(t_{i})}{3(1-t_{i})} + O\left( \frac{\log \log n}{n^{2/3}} \right)\\
        &=1 - \frac{2t_{i}}{3} + O\left(\frac{\vep(t_{i})}{1-t_{i}} + \frac{\log \log n}{n^{2/3}} \right)\\
        &=1 - \frac{2t_{i}}{3} + O\left(\frac{\log^{4}n}{n^{1/3}} \right)\\
        &=(1+o(1))\left(1 - \frac{2t_{i}}{3} \right).
    \end{align*}
    For the penultimate line, we use that $\frac{\vep(t_{i})}{1-t_{i}} = \frac{\log^{3}n}{n^{1/3}(1-t_{i})^{C+1}} \leq \frac{\log^{3}n (\log \log n)^{C+1}}{n^{1/3}}$ for $t_{i} \leq 1 - \frac{1}{\log \log n}.$ For the last, note that $\frac{1}{3} \leq 1 - \frac{2t_{i}}{3} \leq 1$ for $t_{i} \in [0,1]$.

    Since $\wt{\mathcal{E}}_{i}$ holds for all $i = 0, 1, \dots, i_{\max} = \frac{n}{\omega}\left(1 - \frac{1}{\log \log n} \right)$ a.a.s.\ by Theorem~\ref{thm:diff_eq}, we may conclude from the above and (\ref{eq:bc_prob}) that for all $j \in \{i_{0} + 1, \dots, i_{1}\}$, we have 
    $$
    \prob\Big(j+1 \in \mathcal{B}_{C}\,|\,\mathcal{F}_{j}\Big) \leq (1+o(1))\left(1 - \frac{2t_{i}}{3}\right),
    $$
    where $i$ is the largest index so that $i_{0} + i\omega \leq j$.

    The sum $\sum_{j=i_{0} + 1}^{i_{1}}\unit\{j \in \mathcal{B}_{C}\}$ is thus stochastically dominated by a sum of independent $\{0,1\}$-random variables
    $$
        Z = \sum_{i=0}^{i_{\max}-1}\sum_{h=1}^{\omega}Z_{h}^{(i)},
    $$
    where $\prob(Z_{h}^{(i)} = 1) = (1+o(1))\left(1 - \frac{2t_{i}}{3} \right)$ for all $h = 1, 2, \dots, \omega$, uniformly in $i$. We compute
    \begin{align*}
        \E[Z] &= (1+o(1))\omega\sum_{i=0}^{i_{\max}-1}\left(1 - \frac{2t_{i}}{3} \right)\\
        &=(1+o(1))\omega \sum_{i=0}^{i_{\max}-1}\left(1 - \frac{2}{3}\cdot\frac{i_{0} + i\omega}{n} \right)\\
        &=(1+o(1))\omega i_{\max}\left(1 - \frac{2i_{0}}{3n} \right) - \frac{2\omega}{3n}\sum_{i=0}^{i_{\max}-1}i\\
        &=(1+o(1))\left(n - \frac{2\omega}{3n}\frac{i_{\max}^{2}}{2}\right)\\
        &=(1+o(1))\frac{2n}{3}.
    \end{align*}
    It follows from Proposition~\ref{prop:chernoff} that $Z \leq (1+o(1))\E[Z] = (1+o(1))\frac{2n}{3}$ a.a.s. Since $Z$ stochastically dominates $|\mathcal{B}_{C}|$, we may conclude $\frac{1}{2}|\mathcal{B}_{C}| \leq (1+o(1))\frac{n}{3}$ a.a.s., completing the proof.
\end{proof}

\section{Conclusion}

In this paper, we have shown that $s(\mathcal{G}_{n,3})\leq (1+o(1))\frac{n}{3}$ a.a.s., and have additionally made the conjecture that $s(\mathcal{G}_{n,3}) = (1+o(1))\frac{n}{4}$ a.a.s., which is (asymptotically) smallest possible for cubic graphs on $n$ vertices. Resolving this conjecture is an interesting direction for future research. Another one is to study $s(\mathcal{G}_{n,d})$ for $d \geq  4$---a problem which at present appears challenging. 

We considered the following crude technique for upper bounding the Sudoku number of a graph $G$ with chromatic number $k$. Start with any $k$-coloring of $G$ using colour classes $\{1,2,\dots,k\}$. Iteratively move vertices from class $k$ to another available class until every remaining vertex has a neighbour in each class $i$ for $i = 1,\dots,k-1$. Then the union of classes $1$ through $k-1$ is a Sudoku set for $G$, and we have the bound 
\begin{equation}\label{eq:independence_bound}
    s(G) \leq (\chi(G)-1)\alpha(G),
\end{equation}
where $\alpha(G)$ is the size of the largest independent set in $G$.

Using the best available bound $\alpha(\mathcal{G}_{n,4}) \leq 0.41635$ a.a.s.\ (see \cite{mckay1987lnl}) and the fact that $\chi(\mathcal{G}_{n,4})=3$ a.a.s.\ (\cite{shi2007colouring4}), \eqref{eq:independence_bound} gives $s(\mathcal{G}_{n,4}) \leq 0.8327 n$ a.a.s. The next value of $d$ for which $\chi(\mathcal{G}_{n,d})$ is known explicitly is $d = 6$, where we have $\chi(\mathcal{G}_{n,d}) = 4$ a.a.s.\ \cite{shi2007colouring}. However, it has been shown that $\mathcal{G}_{n,6}$ has an independent set of size at least $0.33296n$ a.a.s.\ \cite{duckworth2009large}, meaning that any bound using \eqref{eq:independence_bound} for $d = 6$ will be very close to $n$. In fact, currently available upper bounds, such as $0.35799n$, are well above $n/3$ (see for instance \cite{duckworth2009large}) and thus, are far from providing any useful bound on $s(\mathcal{G}_{n,6})$. For large $d$, \eqref{eq:independence_bound} is too weak to say anything nontrivial about $s(\mathcal{G}_{n,d})$. Indeed, for $d \to \infty$ sufficiently slowly with respect to $n$, it is known that $\chi(\mathcal{G}_{n,d}) = (1+o(1))\frac{n}{\alpha(\mathcal{G}_{n,d})} = (1+o(1))\frac{d}{2\log d}$ a.a.s.\ \cite{frieze1992independence}. 

For small $d$, one could attempt to construct a Sudoku set algorithmically as we did for $d = 3$. In \cite{shi2007colouring4} and \cite{shi2007colouring}, Shi and Wormald analyze an algorithm which efficiently colours $\mathcal{G}_{n,d}$ a.a.s.\ for $d \in \{4,5,\dots,10\}$, using exactly $\chi(\mathcal{G}_{n,d})$ colours for $d = 4$ and $d=6$ and at most $\chi(\mathcal{G}_{n,d}) + 1$ colours for the other values of $d$. It is conceivable that their algorithm could be adapted to produce a Sudoku set along with the colouring, although this task appears to be difficult.

\bibliographystyle{plain}
\bibliography{refs}

\end{document}